 \tikzset{
  symbol/.style={
    draw=none,
    every to/.append style={
      edge node={node [sloped, allow upside down, auto=false]{$#1$}}}
      }
      }
	\theoremstyle{definition} 
	\newtheorem{defn}{Definition}[section]
	\theoremstyle{plain} 
	\newtheorem{thm}[defn]{Theorem}
	\newtheorem*{thm*}{Theorem} 
	\newtheorem{lem}[defn]{Lemma}
	\newtheorem{prop}[defn]{Proposition}
	\newtheorem{cor}[defn]{Corollary}
	\theoremstyle{remark} 
	\newtheorem{rmk}[defn]{Remark}
\renewcommand{\AA}{\mathbb A}
\newcommand{\CC}{\mathbb C}
\newcommand{\HH}{{\mathbb H}}
\newcommand{\PP}{{\mathbb P}}
\newcommand{\QQ}{{\mathbb Q}}
\newcommand{\RR}{{\mathbb R}}
\newcommand{\bS}{{\mathbb S}}
\newcommand{\ZZ}{{\mathbb Z}}
\newcommand{\cA}{{\mathcal A}}
\newcommand{\cC}{{\mathcal C}}
\newcommand{\cD}{{\mathcal D}}
\newcommand{\cE}{{\mathcal E}}
\newcommand{\cF}{{\mathcal F}}
\newcommand{\cH}{{\mathcal H}}
\newcommand{\cJ}{{\mathcal J}}
\newcommand{\cM}{{\mathcal M}}
\newcommand{\cO}{{\mathcal O}}
\newcommand{\cT}{{\mathcal T}}
\newcommand{\cU}{{\mathcal U}}
\newcommand{\cV}{{\mathcal V}}
\newcommand{\cW}{{\mathcal W}}
\newcommand{\cX}{{\mathcal X}}
\newcommand{\cY}{{\mathcal Y}}
\newcommand{\cZ}{{\mathcal Z}}
\newcommand{\frakN}{\mathfrak N}
\newcommand{\frakX}{\mathfrak X}
\newcommand{\frakc}{\mathfrak c}
\newcommand{\frakg}{\mathfrak g}
\newcommand{\frakh}{\mathfrak h}
\newcommand{\frakl}{\mathfrak l}
\newcommand{\frakm}{\mathfrak m}
\newcommand{\frakp}{\mathfrak p}
\newcommand{\frakq}{\mathfrak q}
\newcommand{\sF}{\mathscr F}
\newcommand{\sP}{\mathscr P}
\newcommand{\Qbar}{\overline{\QQ}}
\newcommand{\Qpbar}{\Qbar_p}
\newcommand{\cris}{\mathrm{cris}}
\newcommand{\Sen}{\mathrm{Sen}}
\newcommand{\defeq}{\colonequals} 
\newcommand{\isom}{\cong} 
\newcommand{\congr}{\equiv} 
\newcommand{\inj}{\hookrightarrow}
\newcommand{\surj}{\twoheadrightarrow}
\newcommand{\directsum}{\oplus} 
\newcommand{\tensor}{\otimes} 
\newcommand{\Directsum}{\bigoplus} 
\newcommand{\Tensor}{\bigotimes} 
\DeclareMathOperator\Hom{Hom} 
\DeclareMathOperator\End{End} 
\DeclareMathOperator\GL{GL} 
\DeclareMathOperator\SL{SL} 
\DeclareMathOperator\Sp{Sp} 
\DeclareMathOperator\Or{O} 
\DeclareMathOperator\SO{SO} 
\DeclareMathOperator\Spec{Spec} 
\DeclareMathOperator\Spf{Spf} 
\newcommand{\Gm}{\mathbb{G}_m} 
\DeclareMathOperator\Sh{Sh} 
\newcommand{\rig}{\mathrm{rig}} 
\newcommand{\ord}{\mathrm{ord}} 
\newcommand{\tor}{\mathrm{tor}} 
\newcommand{\Iw}{\mathrm{Iw}} 
\DeclareMathOperator\Res{Res} 
\DeclareMathOperator\Ext{Ext} 
\newcommand{\id}{\mathrm{id}} 
\DeclareMathOperator\val{val} 
\DeclareMathOperator\Gal{Gal} 
\DeclareMathOperator\Tr{Tr} 
\DeclareMathOperator\Nm{Nm} 
\DeclareMathOperator\Frob{Frob} 
\DeclareMathOperator\im{im} 
\DeclareMathOperator\Sym{Sym} 
\let\det\relax
\DeclareMathOperator{\det}{det} 
\newcommand{\kunder}{{\underline{k}}}
\newcommand{\omegaunder}{{\underline{\omega}}}
\newcommand{\alphaunder}{{\underline{\alpha}}}
\newcommand{\sfD}{\mathsf{D}}
\newcommand{\Xbar}{\overline{X}}
\newcommand{\DR}{\mathrm{DR}}
\newcommand{\wt}{\mathrm{wt}}
\newcommand{\cyc}{\mathrm{cyc}}
\newcommand{\der}{\mathrm{der}}
\newcommand{\sa}{\mathrm{sa}} 
\DeclareMathOperator\Fr{Fr} 
\DeclareMathOperator\Fil{Fil}
\DeclareMathOperator\diag{diag}
\DeclareMathOperator\loc{loc}
\DeclareSymbolFont{cyrletters}{OT2}{wncyr}{m}{n}
\DeclareMathSymbol{\Sha}{\mathalpha}{cyrletters}{"58} 
\DeclareRobustCommand{\SkipTocEntry}[5]{} 
\newcounter{counter}
\begin{document}
\title{Ramification of Hilbert eigenvarieties at classical points}
\author{Chi-Yun Hsu}
\address{Department of Mathematics, Harvard University, 1 Oxford Street, Cambridge, MA 02138, USA}
\email{chiyun@math.harvard.edu}
\date{\today}


\begin{abstract}
Andreatta--Iovita--Pilloni constructed eigenvarieties for cuspidal Hilbert modular forms. 
The eigenvariety has a natural map to the weight space, called the weight map.
At a classical point, we compute a lower bound of the dimension of the tangent space of the fiber of the weight map using Galois deformation theory.
Along with the classicality theorem due to Tian--Xiao, this enables us to characterize the classical points of the eigenvariety which are ramified over the weight space, in terms of the local splitting behavior of the associated Galois representation.
\end{abstract}

\maketitle

\tableofcontents


\section{Introduction}
Let $F$ be a totally real field of degree $d$ over $\QQ$.
Andreatta, Iovita, and Pilloni constructed the \emph{cuspidal Hilbert eigenvariety} $\cE$ parametrizing $p$-adic overconvergent cuspidal Hilbert Hecke eigenforms of finite slope over $F$ (\cite{AIP16}).
Let $\cW \defeq \Spf(\ZZ_p[\![(\cO_F \tensor_\ZZ \ZZ_p)^\times \times \ZZ_p^\times]\!])^\rig$ be the \emph{weight space}, which parametrizes continuous characters on $(\cO_F\tensor_\ZZ \ZZ_p)^\times$.
Both the cuspidal Hilbert eigenvariety $\cE$ and the weight space $\cW$ are $(d+1)$-dimensional rigid analytic spaces over $\QQ_p$.
Moreover there is a natural map from the cuspidal Hilbert eigenvariety to the weight space, the \emph{weight map}
\[
 \wt \colon \cE \rightarrow \cW,
\]
sending an overconvergent Hecke eigenform to its weight character. 
For example, a classical Hilbert modular form has weight $((k_\tau)_\tau,w)\in\ZZ^{d+1}$, where $\tau$ runs through archimedean places of $F$, and its weight character is $(z,z')\mapsto \left(\prod_\tau \tau(z)^{k_\tau}\right) \cdot z'^w$.

Our purpose is to study the ramification locus of the cuspidal Hilbert eigenvariety over the weight space from the perspective of Galois representations.
Let $x\in \cE$ be a point on the cuspidal Hilbert eigenvariety.
Then $x$ corresponds to $f$, a normalized overconvergent cuspidal Hilbert Hecke eigenform of finite slope.
There is a $p$-adic Galois representation $\rho_x \colon \Gal_F \rightarrow \GL_2(\CC_p)$ associated to $x$ matching the Frobenius eigenvalues of $\rho_x$ with the Hecke eigenvalues of $f$.

Now assume that $p$ splits completely in $F$.
Also assume that $f$ is classical (Definition~\ref{defn:classical}). 
Then the weight of $f$ is $((k_\tau)_\tau, w)\in\ZZ^{d+1}$, where $k_\tau,w\in\ZZ$, $w\geq k_\tau \geq2$ and $k_\tau \congr w \pmod 2$ for all archimedean places $\tau$ of $F$.
One knows that for all primes $\frakp\mid p$ of $F$, the $U_\frakp$-eigenvalue $\lambda_\frakp$ of $f$ satisfies 
\[
 \frac{w-k_{\tau_\frakp}}2 \leq \val_p(\lambda_\frakp) \leq \frac{w+k_{\tau_\frakp}-2}2.
\]
Here $\tau_\frakp$ is the archimedean place of $F$ identified with the $p$-adic place $\frakp$ via a fixed isomorphism $\CC\isom \Qpbar$, and $\val_p$ is the $p$-adic valuation normalized such that $\val_p(p)=1$.
The rational number $\val_p(\lambda_\frakp)$ is called the \emph{$\frakp$-slope} of $f$, and the maximal possible $\frakp$-slope $\frac{w+k_{\tau_\frakp}-2}2$ is called the \emph{critical} $\frakp$-slope.
Our main theorem says that a classical point $x\in \cE$ is ramified with respect to the weight map if and only if there exists $\frakp\mid p$ such that the local Galois representation $\left.\rho_x\right|_{\Gal_{F_\frakp}}$ splits and $x$ has critical $\frakp$-slope.

In fact, we provide two more equivalent statements.
One of the statements involve the theta operator $\Theta$, which is an endomorphism on the space of overconvergent Hilbert modular forms.
See Section~\ref{subsec:theta} for the precise definition of the theta operator.
Here is our main theorem.
\begin{thm} \label{thm:main}
 Assume $p$ splits completely in $F$.
 Let $f$ be a classical cuspidal Hilbert Hecke eigenform of weight $(\kunder,w)$ of finite slope.
 For each prime $\frakp$ of $F$ above $p$, let $\lambda_\frakp$ denote the $U_\frakp$-eigenvalue of $f$, and assume that $\val_p(\lambda_\frakp)\neq \frac{w-1}2$.
 Let $x \in \cE$ be the point on the cuspidal Hilbert eigenvariety $\cE$ corresponding to $f$. 
 Then the following are equivalent.
 \begin{enumerate}
  \item \label{cond:ram} The point $x$ is ramified with respect to the weight map $\wt\colon \cE \rightarrow \cW$.
  \item \label{cond:geneigen} There exists an overconvergent cuspidal Hilbert generalized Hecke eigenform $f'$ with the same Hecke eigenvalues and weight as $f$, but which is not a scalar multiple of $f$.
  \item \label{cond:imtheta} $f$ is in the image of $\Theta$.
  \item \label{cond:split} There exists a prime $\frakp$ of $F$ above $p$ such that the local Galois representation $\left.\rho_x\right|_{\Gal_{F_\frakp}}$ splits and $x$ has critical $\frakp$-slope.
 \end{enumerate}
\end{thm}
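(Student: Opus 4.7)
The plan is to establish $(1)\Leftrightarrow(2)$ and $(3)\Leftrightarrow(4)$ as relatively formal equivalences, with the substantive work in $(2)\Leftrightarrow(4)$ carried out via Galois deformation theory and the Tian--Xiao classicality theorem. For $(1)\Leftrightarrow(2)$, I would use the local description of $\cE$ as the spectrum of a Hecke algebra $\mathbf{T}$ acting on overconvergent cusp forms: ramification of $\wt$ at $x$ is equivalent to the scheme-theoretic fiber having nonzero cotangent space at $x$, equivalently to the existence of a tangent vector in the fiber, which translates directly into a length-two $\mathbf{T}_{\mathfrak{m}_x}$-module above the eigensystem of $f$ with weight frozen, namely a generalized Hecke eigenform $f'$ with the same weight and Hecke eigenvalues as $f$ not proportional to $f$.

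For $(2)\Leftrightarrow(4)$, I would pass to the Galois side. A generalized eigenform $f'$ as in (2) produces a first-order deformation $\tilde\rho$ of $\rho_x$ over $\Qpbar[\varepsilon]/(\varepsilon^2)$ which is unramified outside $Np$ with inertia-fixed local conditions away from $p$, has the same Hodge--Tate weights as $\rho_x$ (because the weight is frozen on the fiber), and whose trianguline parameters at each $\mathfrak{p}\mid p$ match those of $\rho_x$ (because each $U_\mathfrak{p}$-eigenvalue is frozen). The trianguline deformation theory of two-dimensional representations of $\Gal_{F_\mathfrak{p}}$ then shows that the corresponding local deformation space vanishes unless $\rho_x|_{\Gal_{F_\mathfrak{p}}}$ splits \emph{and} the $\mathfrak{p}$-slope is critical, in which case a one-dimensional contribution arises from $\Ext^1(\psi_2,\psi_1)$, where $\psi_1\directsum\psi_2$ is the local splitting. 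A global Selmer-type bound then gives $\dim T \geq \#\{\mathfrak{p}\mid p : \text{split and critical at }\mathfrak{p}\}$, where $T$ denotes the tangent space of the fiber at $x$, yielding $(4)\Rightarrow(2)$; conversely, any nontrivial $\tilde\rho$ must concentrate at some $\mathfrak{p}\mid p$, where trianguline rigidity forces both splitting and critical slope, yielding $(2)\Rightarrow(4)$. The hypothesis $\val_p(\lambda_\mathfrak{p})\neq (w-1)/2$ is exactly what excludes the degenerate case where the trianguline filtration is not unique; the Tian--Xiao classicality theorem is used to upgrade overconvergent generalized eigenforms to classical ones when required.

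For $(3)\Leftrightarrow(4)$, I would use that the theta operator $\Theta$ shifts the $\mathfrak{p}$-slope by the gap between the two Hodge--Tate weights at $\mathfrak{p}$, and that its inversion is obstructed Galois-theoretically precisely by the failure of $\rho_x|_{\Gal_{F_\mathfrak{p}}}$ to split: when the local representation splits, one can untwist and produce a preimage, and critical $\mathfrak{p}$-slope is exactly the condition ensuring that $f$ itself (rather than a mere twist) lies in the image. This is the Hilbert-modular analogue of Coleman's characterization in the $\GL_2/\QQ$ setting and should reduce prime by prime. The main obstacle will be the local Galois deformation calculation at each $\mathfrak{p}\mid p$: one has to formulate the correct deformation problem capturing ``fixed Hodge--Tate weights and fixed $U_\mathfrak{p}$-eigenvalue,'' show it is represented by a local deformation ring whose tangent space can be computed, and separate the split-plus-critical regime cleanly from the generic one (in particular from the nearly-critical case the hypothesis rules out). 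Once this local dimension count is pinned down, assembling the local contributions into a global lower bound on $\dim T$ and matching it against the classicality theorem is the comparatively routine remainder.
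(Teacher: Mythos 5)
Your $(1)\Leftrightarrow(2)$ step, the lower-bound deformation argument giving $(4)\Rightarrow(2)$, and the companion-form direction $(3)\Rightarrow(4)$ all match the paper's approach (Lemma~\ref{lem:1and2}, Theorem~\ref{thm:4to1}, Proposition~\ref{prop:3to4}). But your way of \emph{closing the cycle} has a real gap, and the ingredient you are missing is precisely the one the paper leans on hardest.

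You propose two routes to complete the equivalence, and both are problematic. First, $(2)\Rightarrow(4)$ ``via trianguline rigidity'': this amounts to proving an \emph{upper} bound (vanishing) for the tangent space of the fiber Selmer group when no $\mathfrak{p}\mid p$ is both split and critical. The paper's Section~\ref{sec:Galdeform} proves only a \emph{lower} bound (Corollary~\ref{cor:codim}, Theorem~\ref{thm:4to1}); even if the local trianguline deformation space vanishes at each $\mathfrak{p}$, that does not by itself kill the global $H^1$ with those local conditions — one would need a Poitou--Tate / dual-Selmer computation or an $R=\mathbf{T}$ statement that the paper never establishes. Second, $(4)\Rightarrow(3)$ ``by untwisting'': this is the existence of a companion form, which for $F=\mathbb{Q}$ is essentially the Breuil--Emerton theorem and is deep automorphic input. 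The paper deliberately avoids proving it directly; the implication $(4)\Rightarrow(3)$ is only obtained \emph{a posteriori} by going around the loop $(4)\Rightarrow(1)\Rightarrow(2)\Rightarrow(3)$.

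What makes the cycle close in the paper is the implication $(2)\Rightarrow(3)$, which you do not address. It rests on Theorem~\ref{thm:OCandRigid} (the isomorphism of $S^{\dagger}/\Theta$ with compactly-supported rigid cohomology) together with the critical-slope classicality Proposition~\ref{prop:critcl}. The latter is a nontrivial extension of Tian--Xiao to the boundary slope $\frac{w+k_{\tau_\mathfrak{p}}-2}{2}$ and is proved via the Atkin--Lehner twist $w$ and the computation (Lemma~\ref{lem:Upadj}) that $\mathrm{Fr}_\mathfrak{p}$ is the adjoint of $U_\mathfrak{p}$ under the modified Poincar\'e pairing. This is where multiplicity one is converted into the statement that if a second generalized eigenform $f'$ exists in the same weight and eigensystem, then $f$ itself must lie in $\mathrm{im}(\Theta_{\underline{k}})$. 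Mentioning Tian--Xiao as ``upgrading overconvergent forms to classical ones'' does not capture this; the work is in identifying the $\alpha$-slope part of $S_{(\underline{k},w)}(K_1(\mathfrak{N})^p\mathrm{Iw}_p)$ with the corresponding slope part of $H^d_{\rig,!}$ even at critical slope. Without $(2)\Rightarrow(3)$ or an actual upper bound on the fiber Selmer group, your argument proves only $(4)\Rightarrow(1)\Leftrightarrow(2)$ and $(3)\Rightarrow(4)$, leaving $(2)\Rightarrow(3)$ (or $(2)\Rightarrow(4)$) unsupplied.
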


We make some remarks about the theorem.
When $F=\QQ$, a conjecture of Greenberg says that a classical cuspidal Hecke eigenform has complex multiplication (CM) if and only if its $p$-adic Galois representation splits locally at $p$. 
Hence assuming the conjecture of Greenberg, Theorem~\ref{thm:main} says that a classical point of critical slope on the eigencurve is ramified if and only it is CM.
This was conjectured by Coleman (\cite[Remark 2 in Sec.\ 7]{Col96}).
In fact, the conjecture of Greenberg was proved by Emerton in weight $2$ (\cite{Emer}).
For general $F$, we can formulate a generalization of the conjecture of Greenberg: a classical cuspidal Hilbert Hecke eigenform is CM if and only if its $p$-adic Galois representation splits locally at all primes of $F$ above $p$.
In the case of parallel weight $2$, the argument of Emerton implies that the Galois representation splitting at one $p$-adic prime of $F$ is CM, and hence also splits at all other $p$-adic primes.
We conjecture that this holds for general weights. 
Geometrically, this means that a classical point $x\in \cE$ of critical slope is ramified if and only if it is CM.

In Theorem~{\ref{thm:main}}, the equivalence between (\ref{cond:ram}) and (\ref{cond:split}) was known for $F=\QQ$.
It was first proven by Breuil--Emerton (\cite[Th\'eor\`eme 1.1.3]{BE10}), an ingredient in their work to prove $p$-adic local-global compatibility for $\GL_2$ over $\QQ$ in the locally reducible case.
Later Bergdall gave a different and simpler proof (\cite{Berg14}).
Our proof for a general totally real field $F$ is a generalization of Bergdall's idea.

The structure of the proof of Theorem~\ref{thm:main} is as follows:
We prove the equivalence of (\ref{cond:ram}) and (\ref{cond:geneigen}) in Lemma~\ref{lem:1and2}.
This is basically unwinding the definitions.
The equivalence of (\ref{cond:geneigen}) and (\ref{cond:imtheta}) is Corollary~\ref{cor:2and3}, strongly relying on a classicality theorem at critical slope deduced from Tian--Xiao's work (\cite{TX16}).
The implication from (\ref{cond:imtheta}) to (\ref{cond:split}) is Proposition~\ref{prop:3to4}.
The key is an argument of companion forms, with a slight complication in the case of totally real fields not equal to $\QQ$ because of the presence of multiple theta operators.
Finally the implication from (\ref{cond:split}) to (\ref{cond:ram}) is a corollary of the following theorem, a computation of the dimension of the tangent space of the fiber of the weight map.

\begin{thm}[Theorem~\ref{thm:4to1}] \label{thm:main2}
 Assume that $p$ splits completely in $F$.
 Let $f$ be a classical cuspidal Hilbert Hecke eigenform of finite slope.
 For each prime $\frakp$ of $F$ above $p$, let $\lambda_\frakp$ denote the $U_\frakp$-eigenvalue of $f$.
 Let $x$ be the point on the cuspidal Hilbert eigenvariety $\cE$ corresponding to $f$. 
 Then the tangent space $T_x\cE_{\wt(x)}$ of the fiber of $\wt$ at $x$ satisfies
\[
 \dim_{\bar{k}(x)} T_x\cE_{\wt(x)} \geq\#\{\frakp \mid \text{the local representation $\left.\rho_x\right|_{\Gal_{F_\frakp}}$ splits and $x$ has critical $\frakp$-slope} \},
\]
where $\bar{k}(x)$ is the residue field of the point $x$.
\end{thm}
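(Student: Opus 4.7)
The approach is Galois-theoretic: translate $T_x\cE_{\wt(x)}$ into a space of Galois deformations, extract extra dimensions from each critical-slope split prime, and lift back to the eigenvariety. A tangent vector in $T_x\cE_{\wt(x)}$ is a generalized Hecke eigenform $\tilde f$ over $k(x)[\epsilon]/(\epsilon^2)$ with the same Hecke eigenvalues and weight as $f$, to which is attached a deformation $\tilde\rho$ of $\rho_x$. By the construction of $\cE$ in \cite{AIP16} together with the family theory of trianguline $(\varphi,\Gamma)$-modules (Kedlaya-Pottharst-Xiao), for each $\frakp\mid p$ the local deformation $\tilde\rho|_{\Gal_{F_\frakp}}$ is trianguline with the same refinement parameters as $\rho_x$: the $U_\frakp$-eigenvalue and the Hodge-Tate-Sen weights are preserved to first order. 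This yields an injection of $T_x\cE_{\wt(x)}$ into a Selmer group $H^1_\cF(\Gal_{F,S},\ad\rho_x)$ cut out by these local trianguline conditions (with fixed refinement) at $\frakp\mid p$ and suitable local conditions at the finitely many other ramified primes.

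The second step is a local computation at each $\frakp\mid p$ using the Bellaiche-Chenevier/Colmez theory of trianguline deformations. In the generic case -- irreducible local representation, reducible but non-split, or non-critical slope -- the local tangent space with fixed refinement is exhausted by the directions already used to fix the Hodge-Tate weights and $U_\frakp$-eigenvalue and so contributes nothing to the fiber of $\wt$. In the exceptional case when $\rho_x|_{\Gal_{F_\frakp}}$ splits as $\psi_1\oplus\psi_2$ and the $\frakp$-slope is critical, the local tangent space acquires one extra dimension: one can infinitesimally rotate the triangulation inside the splitting while preserving both Hodge-Tate weights and the $U_\frakp$-eigenvalue, a move possible precisely because of the splitting. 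The hypothesis $\val_p(\lambda_\frakp)\ne(w-1)/2$ is what rules out further degeneracies in this local computation.

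Globally, the extra tangent directions coming from different $\frakp$ are linearly independent in $H^1_\cF$, and a standard Poitou-Tate/Wiles Selmer-group calculation shows that they lift to genuine global classes (the relevant $H^2$ obstructions are controlled by modularity of $\rho_x$ and the chosen local conditions). This gives $\dim H^1_\cF\geq\#\{\frakp\mid\rho_x|_{\Gal_{F_\frakp}}\text{ splits and slope is critical}\}$. To close the argument, one shows that every such Selmer class is realized by a tangent vector of $\cE$: either via a tangent-level $R=T$-type assertion in the trianguline framework, or by exploiting that the universal pseudo-representation on $\cE$ admits, locally near $x$, a triangulation whose infinitesimal deformations are explicitly realized by infinitesimal deformations of the overconvergent eigenform.

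The main obstacle is exactly this last lifting step: a priori the injection $T_x\cE_{\wt(x)}\hookrightarrow H^1_\cF$ only gives an \emph{upper} bound on the tangent space of the fiber, so to obtain the desired lower bound one must upgrade it to a surjection onto the extra classes constructed above. Bergdall's treatment for $F=\QQ$ achieves this using the explicit overconvergent-modular-symbol description of the Coleman-Mazur eigencurve together with a careful analysis of its tangent space at ramified classical points; the Hilbert analogue should follow from the cohomological construction of $\cE$ in \cite{AIP16}, the trianguline structure of universal overconvergent families near $x$, and a parallel analysis of infinitesimal deformations of the refinement in the presence of multiple primes above $p$.
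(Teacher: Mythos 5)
Your proposal shares the paper's Galois-theoretic starting point, but it attacks the inequality from the wrong direction, and the gap you identify at the end is indeed fatal to the strategy as written. You want to inject $T_x\cE_{\wt(x)}$ into a Selmer group, produce $\#\Sigma$ extra Selmer classes by a Poitou--Tate calculation, and then upgrade the injection to a surjection onto these classes. That last step (realizing Selmer classes by genuine overconvergent families) is an $R\to T$-type isomorphism on tangent spaces that the paper neither proves nor needs; absent it, your argument only yields an \emph{upper} bound on $\dim T_x\cE_{\wt(x)}$, the opposite of what is required.

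The paper's actual proof sidesteps this entirely by bounding not $\dim T_x\cE_{\wt(x)}$ but rather the \emph{codimension} of $T_x\cE_{\wt(x)}$ inside $T_x\cE$. Concretely, one sets up a global deformation functor $D$ (absolutely irreducible, minimal away from $p$, parallel sum of Hodge--Tate--Sen weights, crystalline-period condition at each $\frakp\mid p$) and a subfunctor $D^0$ imposing constant Hodge--Tate--Sen weights. One shows (Proposition~\ref{prop:R=T}) that $T_x\cE\subset D(\bar k(x)[\varepsilon]/\varepsilon^2)$ with $T_x\cE_{\wt(x)}=T_x\cE\cap D^0$, so $\dim T_x\cE-\dim T_x\cE_{\wt(x)}\leq \dim D-\dim D^0$. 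The local computation then shows $\dim D-\dim D^0\leq d+1-\#\Sigma$: for $\frakp\in\Sigma$ (split, critical slope), the crystalline-period condition already built into $D$ forces the diagonal extension class $e_{\frakp,22}$ to be crystalline (Lemma~\ref{lem:22crys}), hence one Hodge--Tate--Sen weight is automatically constant; by the parallel-weight condition, imposing the remaining constancy at a single $\frakp\in\Sigma$ forces constancy at every $\frakp'\in\Sigma$ (Lemma~\ref{lem:constHT}); so passing from $D$ to $D^0$ costs only $1$ dimension for all of $\Sigma$ plus at most $1$ for each $\frakp\notin\Sigma$. Finally, $\dim T_x\cE\geq d+1$ is free from equidimensionality of $\cE$, and subtracting gives $\dim T_x\cE_{\wt(x)}\geq\#\Sigma$. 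No Poitou--Tate argument, no global class construction, and in particular no surjectivity of $T_x\cE\to D$ is needed. I would encourage you to rework the argument along these lines: turn the injection $T_x\cE\hookrightarrow D$ into a codimension estimate rather than trying to invert it.
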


The essence of the proof of the theorem is a computation in Galois deformation theory.
To set up a Galois deformation problem characterizing overconvergent Hilbert modular forms, we use the analytic continuation of crystalline periods for overconvergent modular forms.
This is due to \cite{KPX14} and \cite{Liu15} independently, building on the work of Kisin for $F=\QQ$ (\cite{Kis03}).
With a computation in Galois cohomology, the condition ``$\left.\rho_x\right|_{\Gal_{F_\frakp}}$ splits and $x$ has critical $\frakp$-slope'' forces the first order deformations of $\rho_x$ to have constant $\tau_\frakp$-Hodge--Tate--Sen weights.
As for other primes $\frakp'$ of $F$ above $p$ for which the condition does not hold, fixing Hodge--Tate--Sen weight is a codimension one condition.
Hence the number of $p$-adic primes for which the condition does not hold is basically the codimension of the tangent space of the fiber inside the whole tangent space of the eigenvariety.
This gives the lower bound of the dimension of the fiber as in the theorem.

We make a digression here to mention a few related works.
The analog of Theorem~\ref{thm:main} in the weight $1$ case has been studied by many people.
When $F=\QQ$, Bella\"iche--Dimitrov proved that a weight $1$ Hecke eigenform is ramified over the weight space if and only if it has real multiplication (RM) (\cite{BD}).
This is generalized by Betina to the case of Hilbert modular forms of parallel weight $1$ (\cite{Bet16}).
The analog of Theorem~\ref{thm:main} in the case of Eisenstein series of critical slope is also of interest.
When $F=\QQ$, Bella\"iche--Chenevier established an equivalent condition for an Eisenstein series of critical slope to be ramified over the weight space, in terms of $p$-adic zeta values (\cite{BC}).
There is also a forthcoming work of Adel Betina, Mladen Dimitrov and Sheng-Chi Shih, investigating the local structure of the Hilbert cuspidal eigenvariety at weight 1 Eisenstein points.

In the presence of a ramification point on the eigenvariety, one can further ask for an explicit description of the associated generalized Hecke eigenform.
Darmon--Lauder--Rotger gave a formula for the Fourier coefficients of the associated generalized Hecke eigenform in the weight $1$ RM case (\cite{DLR15}).
This is also generalized by Betina to Hilbert modular forms of parallel weight $1$ (\cite{Bet16}).

\addtocontents{toc}{\SkipTocEntry} 
\subsection*{Structure of the paper}
In Section~\ref{sec:Hilb} we review the various definitions of Hilbert modular forms.
In Section~\ref{sec:classicality} we define overconvergent Hilbert modular forms geometrically and recall the classicality theorem proven by Tian--Xiao.
We also prove the equivalence of (\ref{cond:geneigen}) and (\ref{cond:imtheta}) of Theorem~\ref{thm:main} in Corollary~\ref{cor:2and3}.
In Section~\ref{sec:Galrep} we begin the perspective of Galois representations and prove that (\ref{cond:imtheta}) implies (\ref{cond:split}) in Proposition~\ref{prop:3to4}.
In Section~\ref{sec:eigenvar} we review the construction of Hilbert cuspidal eigenvariety and deduce the equivalence of (\ref{cond:ram}) and (\ref{cond:geneigen}) in Lemma~\ref{lem:1and2}.
The last Section~\ref{sec:Galdeform} focuses on Galois deformation theory and we prove Theorem~\ref{thm:main2} (Theorem~\ref{thm:4to1}).

\addtocontents{toc}{\SkipTocEntry} 
\subsection*{Acknowledgments}
The author would like to thank her advisor Barry Mazur for his constant support.
She would also like to thank John Bergdall, Mark Kisin and Koji Shimizu for helpful discussions and comments, Mladen Dimitrov, Kai-Wen Lan and Cheng-Chiang Tsai for answering her questions, and Koji Shimizu and Mladen Dimitrov again for reading an early draft of the paper.
The author is partially supported by the Government Scholarship to Study Abroad from Taiwan.

\addtocontents{toc}{\SkipTocEntry} 
\subsection*{Notations} 
Fix a totally real field $F$ of degree $d$ over $\QQ$.
Let $\Sigma_\infty$ denote the set of archimedean places of $F$; in particular $\#\Sigma_\infty=d$.
Fix a rational prime $p$ which splits completely in $F$.
Let $\Sigma_p$ be the set of primes of $F$ above $p$, so $\#\Sigma_p=d$.
Fix an isomorphism $\iota_p\colon\CC\xrightarrow{\sim}\Qpbar$.
For each $\frakp\in\Sigma_p$,
denote by $\tau_\frakp\in\Sigma_\infty$ the archimedean place of $F$ such that $\iota_p\circ \tau_\frakp\colon F\rightarrow \Qpbar$ induces $\frakp$.

\section{Hilbert modular forms} \label{sec:Hilb}
Let $G$ be the algebraic group $\Res_{\cO_F/\ZZ}\GL_{2}$ over $\ZZ$, and denote by $Z$ the center of $G$.

\subsection{Weights} \label{subsec:weights}
Let $T$ be the diagonal subgroup, a maximal torus, of $G$.
Regarding $\Res_{\cO_F/\ZZ} \Gm$ as the diagonal subgroup of the derived group $G^\der = \Res_{\cO_F/\ZZ} \SL_{2}$ of $G$, we have a map 
$p_1 \colon \Res_{\cO_F/\ZZ} \Gm \rightarrow T$ given by $t \mapsto \diag(t,t^{-1})$. 
On the other hand, regarding $\Res_{\cO_F/\ZZ} \Gm$ as the center $Z$ of $G$, we have another map 
$p_2 \colon \Res_{\cO_F/\ZZ} \Gm \rightarrow T$ given by $t \mapsto \diag(t,t)$.
Then the map 
$p_1\times p_2 \colon \Res_{\cO_F/\ZZ} \Gm \times \Res_{\cO_F/\ZZ} \Gm \rightarrow T$ is surjective, and its kernel is $\Res_{\cO_F/\ZZ}\mu_2$, diagonally embedded into $\Res_{\cO_F/\ZZ}\Gm \times \Res_{\cO_F/\ZZ}\Gm$.
Hence the character group of $T$ is the subgroup of $\ZZ^{\Sigma_\infty}\times \ZZ^{\Sigma_\infty}$ consisting of those $(k_\tau, w_\tau)_\tau$ such that $k_\tau\congr w_\tau \pmod 2$ for all $\tau\in\Sigma_\infty$. 
We will only consider the subgroup of the character group of $T$ consisting of those characters which are of finite order when restricted to $Z(\ZZ)\isom\cO_F^\times$.
The condition means that $w_\tau=w$ is independent of $\tau\in\Sigma_\infty$.
This is because by the proof of Dirichlet unit theorem, the image of 
$\cO_F^\times \rightarrow \RR^{\Sigma_\infty}, a \mapsto (\log \left| \tau(a) \right| )_\tau$ forms a lattice inside the hyperplane $\sum_{\tau\in\Sigma_\infty} x_\tau = 0$.
We will see in Section~\ref{subsec:HilbMF} the reason why we only consider such characters of $T$.

By a \emph{weight}, we mean a tuple $(\underline{k},w)\in \ZZ^{\Sigma_\infty}\times \ZZ$ such that $w\congr k_\tau \pmod2$ for all $\tau\in\Sigma_\infty$. 
We say that a weight $(\underline{k},w)$ is \emph{cohomological} if $w\geq k_\tau\geq2$ for all $\tau\in\Sigma_\infty$.

\subsection{Automorphic perspective} \label{subsec:HilbMF}
We follow \cite[Chap.\ 3]{Bump} and \cite{BJ} for the exposition in this section.
Let $K_\infty$ be $\Or_2(F\tensor_\QQ \RR)$, a maximal compact subgroup of $G(\RR)$, and $K_\infty^0 = \SO_2(F\tensor_\QQ \RR)$ be its connected component containing the identity.
Let $\frakg = \frak{gl}_2(F\tensor_\QQ \RR)$ be the Lie algebra of $G(\RR)$, $U(\frakg)$ its universal enveloping algebra over $\CC$, and $Z(\frakg)$ the center of $U(\frakg)$. 
In our case, $Z(\frakg) \isom \CC[Z_\tau, \Delta_\tau]_{\tau\in\Sigma_\infty}$ (\cite[Sec.\ 3.2]{Bump}), 
where 
\begin{align*}
 Z_\tau&=\begin{pmatrix}1&\\&1\end{pmatrix}_\tau, \text{ and } \\
 \Delta_\tau&=-\frac14(H_\tau^2+2R_\tau L_\tau+2L_\tau R_\tau) \text{ is the Casimir element, with} \\
 R_\tau&=\frac12\begin{pmatrix}1&i\\i&-1\end{pmatrix}_\tau,
 L_\tau=\frac12\begin{pmatrix}1&-i\\ -i&-1\end{pmatrix}_\tau,
 H_\tau=-i\begin{pmatrix}&1\\ -1&\end{pmatrix}_\tau.
\end{align*}

An \emph{automorphic form for $G(\AA)$} is a function $\phi \colon G(\QQ) \backslash G(\AA) \rightarrow \CC$ satisfying the following conditions (\cite[Sec.\ 4]{BJ}):
\begin{enumerate}[label=(\arabic*)]
 \item $\phi$ is a smooth function, i.e., smooth in $g_\infty \in G(\RR)$ and locally constant in $g^\infty \in G(\AA^\infty)$.
 \item ($K$-finite) There exists a compact open subgroup $K \subset G(\AA^\infty)$ such that $\phi(gk)=\phi(g)$ for all $k\in K$.
 \item \label{Kfinite}
($K_\infty$-finite) $K_\infty \cdot \phi$ spans a finite dimensional subspace of $C^\infty(G(\QQ) \backslash G(\AA), \CC)$, where for $k_\infty \in K_\infty$, $(k_\infty\cdot \phi)(g) \defeq \phi(gk_\infty)$.
 \item \label{Zfinite}
($Z(\frakg)$-finite) There is an ideal $I$ of $Z(\frakg)$ of finite codimension annihilating $\phi$, where for $X \in Z(\frakg)$,
\[
 (X\cdot \phi)(g) \defeq \left.\frac{d}{dt}\phi\left(g \cdot \exp(tX)\right)\right|_{t=0}.
\]
 \item For each $g^\infty \in G(\AA^\infty)$, the function $\phi_{g^\infty} \colon G(\RR) \rightarrow \CC, g_\infty \mapsto \phi(g_\infty g^\infty)$ is slowly increasing, i.e., there exist a constant $C$ and a positive integer $N$ (depending on $g^\infty$) such that for all $g_\infty \in G(\RR)$,
\[
 \lvert \phi_{g^\infty}(g_\infty) \rvert \leq C \cdot {\lVert g_\infty \rVert}^N.
\]
Here the norm $\lVert g_\infty \rVert$ is the length of the vector $(\tau(g_\infty), \det \tau(g_\infty)^{-1})_\tau$ in the Euclidean space $\left( M_2(\RR) \directsum \RR \right)^{\Sigma_\infty} \isom \RR^{5d}$.
\end{enumerate}
The compact open subgroup $K$ in (2) is called the \emph{level} of $\phi$.

We say $\phi$ is \emph{cuspidal} if
\[
\int_{F\backslash\AA_F} \phi\left(\begin{pmatrix}1&u\\&1\end{pmatrix}g\right) du = 0 
\]
for all $g\in G(\AA)$.
Here $du$ is an additive Haar measure on $F\backslash\AA_F$.

We define the notion of \emph{weights} for automorphic forms for $G(\AA)$.
First note that $K_\infty \cdot (F\tensor_\QQ \RR)_{>0}$ is a maximal torus of $G(\RR)$, so according to the discussion in Section~\ref{subsec:weights}, its characters can be represented by $(k_\tau, w_\tau)_\tau \in \ZZ^{\Sigma_\infty} \times \ZZ^{\Sigma_\infty}$ with $k_\tau \congr w_\tau \pmod 2$.
Let $\frakh^\pm \defeq \PP^1(\CC) \setminus \PP^1(\RR)$ be the union of the upper and lower half planes in $\CC$.
Given $(k_\tau,w_\tau)_\tau$, define an automorphy factor $j \colon G(\RR)\times (\frakh^\pm)^{\Sigma_\infty} \rightarrow \CC$ by 
\[
 j(g_\infty,z) \defeq \prod_{\tau\in\Sigma_\infty} \det(g_\tau)^{-\frac{w_\tau+k_\tau-2}2}(c_\tau z_\tau +d_\tau)^{k_\tau},
\]
where we write $g_\infty = (g_\tau)_\tau = \begin{pmatrix}a_\tau&b_\tau\\c_\tau&d_\tau\end{pmatrix}_\tau\in G(\RR) = \prod_{\tau\in\Sigma_\infty} \GL_2(\RR)$.
An automorphic form $\phi$ for $G(\AA)$ is said to have \emph{weight $(k_\tau,w_\tau)_\tau$} if 
\begin{enumerate}[label=(\arabic*')]
\setcounter{enumi}{2}
 \item \label{Kfinite'}
for all $k_\infty \in K_\infty \cdot (F\tensor_\QQ \RR)_{>0}$, $\phi(g k_\infty) = j(k_\infty^{-1},(i,\ldots,i)) \phi(g)$, and
 \item \label{Zfinite'}
$\phi$ is annihilated by the ideal $I$ of $Z(\frakg)$ generated by $\Delta_\tau-\frac{k_\tau}2(1-\frac{k_\tau}2)$ and $Z_\tau-(w_\tau-2)$.
\end{enumerate}
Note that \ref{Kfinite'} implies \ref{Kfinite} and \ref{Zfinite'} implies \ref{Zfinite} in the definition of automorphic forms.

We show that the weights $(k_\tau, w_\tau)$ are always in the form $(\kunder,w)$, namely, $w_\tau=w$ is independent of $\tau$.
Let $\phi$ be an automorphic form for $G(\AA)$ of weight $(k_\tau,w_\tau)_\tau$ and level $K$.
We may assume that $\phi$ has a central character, i.e., there exists $\chi\colon Z(\QQ) \backslash Z(\AA) \rightarrow \CC^\times$ such that for all $z\in Z(\AA)$, $\phi(zg)=\chi(z)\phi(g)$.
This is because $K\subset Z(\AA^\infty)$ is of finite index in $Z(\AA^\infty)$, and hence $\phi$ is a finite sum of those automorphic forms of the same weight and level admitting central characters.
Write $\chi^\infty \colon Z(\AA^\infty) \rightarrow \CC^\times$ for the finite component of $\chi$.
Then for $z\in Z(\QQ)$, say $z = \begin{pmatrix}a&\\&a\end{pmatrix}$ with $a\in F^\times$, 
\[
 \phi(g) = \phi(zg) 
= j(z_\infty^{-1}, (i,\ldots,i)) \chi^\infty(z^\infty) \phi(g) 
= \prod_{\tau\in\Sigma_\infty} \tau(a)^{w_\tau-2}\chi^\infty(z^\infty)\phi(g).
\]
Since any character $\chi^\infty \colon Z(\AA^\infty) \rightarrow \CC^\times$ has finite order, so does $\prod_{\tau\in\Sigma_\infty} \tau(a)^{w_\tau-2}$ restricted to $Z(\ZZ) \isom \cO_F^\times$.
We have seen in Section~\ref{subsec:weights} that this condition implies $w_\tau=w$ is independent of $\tau\in\Sigma_\infty$.
This is why we defined a weight to be $(\kunder,w)$ instead of $(k_\tau,w_\tau)_\tau$.

We use automorphic forms for $G(\AA)$ to define Hilbert modular forms.
We have $(\frakh^\pm)^{\Sigma_\infty} = G(\RR)/K_\infty^0\cdot (F\tensor_\QQ \RR)_{>0}$. 
Let $K \subset G(\AA^\infty)$ be a compact open subgroup.
Given an automorphic form $\phi$ for $G(\AA)$ of weight $(\kunder,w)$ and level $K$, define 
\[
 f_\phi \colon (\frakh^\pm)^{\Sigma_\infty} \times G(\AA^\infty) \rightarrow \CC
\]
by 
\[
 f_\phi(g_\infty (i,\ldots,i),g^\infty) = j(g_\infty,(i,\ldots,i)) \phi(g_\infty g^\infty).\]
Since the stabilizer of $(i,\ldots,i)$ is $K_\infty^0 (F\tensor_\QQ \RR)_{>0}$, and $\phi$ satisfies the invariance property \ref{Kfinite'}, $f_\phi$ is well-defined.
In addition, $f_\phi$ satisfies the following conditions.
\begin{enumerate}
 \item $f_\phi(z,g^\infty)$ is holomorphic in $z$ and locally constant in $g^\infty$.
 \item For all $k\in K$, $f_\phi(z,g^\infty k)=f_\phi(z,g^\infty)$.
 \item For all $\gamma \in G(\QQ)$, 
\[
 f_\phi(\gamma z,\gamma g^\infty) = j(\gamma, z) f_\phi(z,g^\infty)
\]
 
\end{enumerate}
We call $f_\phi$ a \emph{Hilbert modular form of weight $(\kunder,w)$ and level $K$}.
We say $f_\phi$ is \emph{cuspidal} if $\phi$ is.
Write $M_{(\kunder,w)}(K,\CC)$ for the space of Hilbert modular forms of weight $(\kunder,w)$ and level $K$, 
and $S_{(\kunder,w)}(K,\CC)$ for the subspace of cusp forms.

\subsection{Geometric perspective} \label{sec:geom}
We follow \cite[Sec.\ 2]{TX16} and \cite[Chap.\ III]{Milne} for the exposition in this section.
Let $K\subset G(\AA^\infty)$ be a compact open subgroup.
Let $\bS \defeq \Res_{\CC/\RR}\Gm$ be the Deligne torus.
Let 
\[
 h_0 \colon \bS(\RR)\isom \CC^\times \rightarrow G(\RR)\isom \prod_{\tau\in\Sigma_\infty}\GL_2(\RR)
\]
be the homomorphism
\[
 a+bi \mapsto \left( \begin{pmatrix} a&-b\\ b&a\end{pmatrix}\right)_\tau.
\]
We may let $G(\RR)$ act on homomorphisms $h\colon \bS(\RR) \rightarrow G(\RR)$ by conjugation on the target.
The stabilizer of $h_0$ is $K_\infty^0 (F\tensor_\QQ \RR)_{>0}$.
Hence the $G(\RR)$-conjugacy class of $h_0$ is identified with the Hermitian symmetric domain $G(\RR)/K_\infty^0 (F\tensor_\QQ \RR)_{>0} = (\frakh^\pm)^{\Sigma_\infty}$.
Let $\Sh_K(G)$ be the Shimura variety of $G$ with level $K$; it is a complex algebraic variety with $\CC$-points
\[
 \Sh_K(G)(\CC) = G(\QQ) \backslash (\frakh^{\pm})^{\Sigma_\infty} \times G(\AA^\infty)/K.
\]

Let $\mu_0$ be the Hodge cocharacter associated to $h_0$, i.e., the homomorphism
\[
 \Gm(\CC) \xrightarrow{z \mapsto (z,1)} \bS(\CC) \xrightarrow{h_{0,\CC}} G(\CC),
\]
where by $(z,1)\in\bS(\CC)$ we mean $\bS(\CC)=\bS(\RR)\tensor_\RR \CC \isom \CC^\times\times\CC^\times$ through $z\tensor 1 \mapsto (z,\bar{z})$.
The \emph{compact dual} of $(\frakh^\pm)^{\Sigma_\infty}$ is the $G(\CC)$-conjugacy class of $\mu_0$.
In our case, it is $G(\CC)/P(\CC) = (\PP^1_\CC)^{\Sigma_\infty}$, where $P$ is the upper triangular subgroup, the standard Borel, of $G$.

We have the Borel embedding $\beta \colon (\frakh^\pm)^{\Sigma_\infty} \inj \PP^1(\CC)^{\Sigma_\infty}$ sending $h$ to its Hodge cocharacter $\mu_h$.
The compact dual $(\PP^1_\CC)^{\Sigma_\infty}$ has a natural $G_\CC$-action on it.
Given a $G_\CC$-bundle $\cJ$ on $(\PP^1_\CC)^{\Sigma_\infty}$, $\beta^{-1}(\cJ)$ is a $G(\RR)$-bundle on $(\frakh^\pm)^{\Sigma_\infty}$. 
Let $Z_s$ be the largest subtorus of $Z$ which splits over $\RR$ but has no subtorus splitting over $\QQ$.
Since $Z\isom \Res_{\cO_F/\ZZ}\Gm$, we have $Z_s \isom \ker[\Res_{\cO_F/\ZZ} \Gm \xrightarrow{\Nm_{\cO_F/\ZZ}} \Gm]$.
When the $G_\CC$-action on $\cJ$ is trivial on $Z_{s,\CC}$, one may define a vector bundle $\cV_K(\cJ)$ on $\Sh_K(G)$, 
\[
 \cV_K(\cJ) \defeq G(\QQ) \backslash \beta^{-1}(\cJ) \times G(\AA^\infty) / K
\]
when $K$ is sufficiently small.
Such vector bundles $\cV_K(\cJ)$ on $\Sh_K(G)$ are called \emph{automorphic vector bundles}.
Moreover, the category of $G_\CC$-vector bundles on the compact dual is equivalent to the category of finite-dimensional complex representations of $P_\CC$ (\cite[Remark III.2.3(a)]{Milne}).
   
Let $(\kunder, w)$ be a weight.
Let $\mathrm{Std}_{P,\tau}$ be the standard (1-dimensional) representation of $P_{\CC,\tau}$ on $\CC^2/(\CC^2)^{P_{\CC,\tau}}$, and $\check{\mathrm{Std}}_{P,\tau}$ its contragredient.
Let $\mathrm{det}_\tau$ be the $1$-dimensional representation of $P_{\CC,\tau}$ given by taking the determinant.
It can be computed that the automorphy factor of $\check{\mathrm{Std}}_{P,\tau}$ is 
\[
 G(\RR) \times (\frakh^\pm)^{\Sigma_\infty} \rightarrow \CC, (g_\infty, z) \mapsto (c_\tau z_\tau+d_\tau)\det(g_\tau)^{-1},
\]
where $g_\infty = (g_\tau)_\tau = \begin{pmatrix}a_\tau&b_\tau\\c_\tau&d_\tau\end{pmatrix}$.
Let $\omegaunder^{(\kunder,w)}$ be the automorphic vector bundle on $\Sh_K(G)$ coming from the ($1$-dimensional) representation 
\[
\Tensor_{\tau\in\Sigma_\infty} \left( \check{\mathrm{Std}}_{P,\tau}^{\tensor k_\tau} \tensor \det_\tau^{-\frac{w-k_\tau-2}2} \right)
\]
of $P_\CC$.
Note that $Z_{s,\CC}$ is in the kernel of the representation as long as the exponent $k_\tau$ of $\check{\mathrm{Std}}_{P,\tau}$ and the exponent $m_\tau$ of $\det_\tau$ are such that $-k_\tau + 2m_\tau$ is independent of $\tau$.
Then we have a geometric interpretation for the space of Hilbert modular forms
\[
 M_{(\kunder, w)}(K,\CC) = H^0(\Sh_K(G),\omegaunder^{(\kunder,w)}).
\]

There is a canonical model of $\Sh_K(G)$ over the reflex field $\QQ$ of $G$.
Assume the level $K$ is of the form $K^p K_p$ with $K^p\subset G(\AA^{\infty,p})$ and $K_p\subset G(\AA_p)$, and that $K_p$ is hyperspecial, i.e. $K_p = \GL_2(\cO_F \times \ZZ_p)$.
Then one can construct an integral model of $\Sh_K(G)$ over $\ZZ_{(p)}$ (\cite[Sec.\ 2.3]{TX16}, \cite[Sec.\ 3.1]{AIP16}).
Roughly speaking, one first constructs an integral model of $\Sh_K(G\times_{\Res_{\cO_F/\ZZ}} \Gm)$ as a disjoint union of $\cM_K^\frakc$, the moduli space of $\frakc$-polarized Hilbert-Blumenthal abelian varieties (HBAVs) of level $K$, where $\frakc$ is a fractional ideal of $\cO_F$ and runs through a fixed set of representatives for the strict class group $\operatorname{Cl}^+(F)$.
Then the integral model of $\Sh_K(G)$ over $\ZZ_{(p)}$ is the disjoint union of the quotient of $\cM_K^\frakc$ by $\det(K)\cap \cO_F^{\times,+}/(K\cap \cO_F^\times)^2$, where $\frakc$ again is a fractional ideal of $\cO_F$ and runs through a fixed set of representatives for the strict class group $\operatorname{Cl}^+(F)$ of $F$
Hence the integral model of $\Sh_K(G)$ is a course moduli space parametrizing HBAVs with extra data.
On the other hand, when 
\begin{align*} \label{cond:level} 
 \det(K)\cap \cO_F^{\times,+} = (K\cap \cO_F^\times)^2 \tag{*}, 
\end{align*}
the quotient map is in fact an isomorphism from a geometric component to its image, and hence $\Sh_K(G)$ is a fine moduli space in this case.
Moreover, given a compact open subgroup $K\subset G(\AA^\infty)$, by shrinking the prime-to-$p$ level $K^p$, one can always reach a level $K'$ for which (\ref{cond:level}) is satisfied (\cite[Lemma 2.5]{TX16}).
We continue to denote the integral model by $\Sh_K(G)$.

Let $K=K^p K_p$ with $K_p$ hyperspecial, and assume $K$ satisfies (\ref{cond:level}).
One can construct an arithmetic toroidal compactification $\Sh_K^\tor(G)$ of $\Sh_K(G)$ (\cite{Chai}\cite[Chap. 6]{Lan13}).
The arithmetic toroidal compactifications are smooth projective over $\ZZ_{(p)}$.
Let $\sfD$ be the toroidal boundary $\Sh_K^\tor(G)\setminus \Sh_K(G)$.
The toroidal boundary is a relative simple normal crossing Cartier divisor of $\Sh_K^\tor(G)$ relative to $\Spec \ZZ_{(p)}$.
There also exists a polarized semi-abelian scheme $\cA^{\sa}$ over $\Sh_K^\tor(G)$ with an $\cO_F$-action, extending the universal abelian scheme on $\Sh_K(G)$.
Let $\omegaunder$ be the pullback of $\Omega^1_{\cA^\sa/\Sh_K^\tor(G)}$ via the identity section.
This is an $(\cO_{\Sh_K^\tor(G)}\tensor_\ZZ \cO_F)$-module, locally free of rank $1$.
There exists a unique $(\cO_{\Sh_K^\tor(G)}\tensor_\ZZ\cO_F)$-module $\cH^1$, locally free of rank $2$, extending the relative first de Rham cohomology of the universal abelian scheme on $\Sh_K(G)$.
The Hodge filtration also extends:
\[
 0 \rightarrow \omegaunder \rightarrow \cH^1 \rightarrow \operatorname{Lie}((\cA^\sa)^\vee) \rightarrow 0.
\]
Let $F^\mathrm{Gal}$ be the Galois closure of $F$, and $R$ an $\cO_{F^\mathrm{Gal},(p)}$-module.
After base change to $R$, we can decompose the $\cO_F$-modules using the archimedean places $\tau \colon \cO_F \rightarrow R$ of $F$, and the $\tau$-component gives
\[
 0 \rightarrow \omegaunder_\tau \rightarrow \cH^1_\tau \rightarrow \operatorname{Lie}((\cA^\sa)^\vee)_\tau \rightarrow 0.
\]
Hence after base change to $R$, one can define an integral model of the automorphic vector bundles $\omegaunder^{(\kunder,w)}$ over $\Sh^\tor_K(G)_R$:
\[
 \omegaunder^{(\kunder,w)} \defeq \Tensor_{\tau\in\Sigma_\infty} \left(\omegaunder_\tau^{k_\tau} \tensor (\wedge^2 \cH^1_\tau)^{\frac{w-k_\tau-2}2}\right).
\]

Let $K=K^pK_p$ with $K_p$ hyperspecial, but not necessarily satisfying (\ref{cond:level}),
Define the space of \emph{Hilbert modular forms of weight $(\kunder,w)$ and level $K$ with coefficients in $R$} to be
\[
 M_{(\kunder,w)}(K,R) \defeq H^0(\Sh_{K'}^\tor(G)_R,\omegaunder^{(\kunder,w)})^{K/K'},
\]
where $K'\subset K$ is a compact open subgroup satisfying (\ref{cond:level}).
And we define the subspace of cuspidal Hilbert modular forms to be
\[
 S_{(\kunder,w)}(K,R) \defeq H^0(\Sh_{K'}^\tor(G)_R,\omegaunder^{(\kunder,w)}(-\sfD))^{K/K'}.
\]
By Koecher's principle, we have $M_{(\kunder,w)}(K,R)=H^0(\Sh_K(G)_R, \omegaunder^{(\kunder,w)})$, which coincides with the previous definition of Hilbert modular forms when $R=\CC$.

\subsection{Theta operators} \label{subsec:theta}
As before, we choose $T$, the diagonal subgroup of $G$, to be our fixed maximal torus, and $P$, the upper triangular subgroup, to be our fixed Borel subgroup of $G$.
The Weyl group $W$ of $G$ is $\{\pm1\}^{\Sigma_\infty}$.
For a subset $J\subset\Sigma_\infty$, let $s_J\in\{\pm1\}^{\Sigma_\infty}$ be the element whose $\tau$-component is $1$ if $\tau\in J$ and is $-1$ if $\tau\notin J$. 
In particular $s_{\Sigma_\infty}$ is the identity element.
We have the usual dot action of $W$ on the character group of $\Res_{\cO_F/\ZZ}$, the diagonal subgroup of $G^\der$: $w\cdot \chi = w(\chi+\rho)-\rho$, where $\rho$ is half of the sum of the positive roots.
In our notation, this means that $\{\pm1\}^{\Sigma_\infty}$ acts on $\ZZ^{\Sigma_\infty}$:
For $J\subset\Sigma_\infty$ and $\underline{k}\in\ZZ^{\Sigma_\infty}$, $s_J\cdot\underline{k}$ has $\tau$-component $k_\tau$ if $\tau\in J$ and $2-k_\tau$ if $\tau\notin J$.

Let $J$ be a subset of $\Sigma_\infty$.
Let $f$ be a local section of $\omegaunder^{(s_J\cdot\kunder,w)}$ with $q$-expansion at a cusp of $\Sh_K^\tor(G)_R$ being 
\[
 f=\sum_\xi a_\xi q^\xi,
\]
where $\xi$ runs through $0$ and the set of totally positive elements in a lattice of $F$.
Define a differential operator of order $k_\tau-1$
\[
 \Theta_{\tau,k_\tau-1}(f) = \frac{(-1)^{k_\tau-2}}{(k_\tau-2)!}\sum_\xi \tau(\xi)^{k_\tau-1}a_\xi q^\xi.
\]

\section{Classicality theorems} \label{sec:classicality}
\subsection{Overconvergent Hilbert modular forms} \label{subsec:OC}
Let $L$ be a subfield of $\CC$ containing $F^\mathrm{Gal}$.
The fixed embedding $\iota_p\colon\CC\xrightarrow{\sim}\Qpbar$ induces a $p$-adic place $\sP$ of $L$.
Let $k_0$ be the residue field of $L_\sP$.

As in Section~\ref{sec:geom}, let $\Sh_K(G)$ over $\ZZ_{(p)}$ be the integral model of the Shimura variety of $G$ of level $K$, where $K=K^pK_p$ with $K_p$ hyperspecial and $K$ satisfying (\ref{cond:level}).
To simplify notation, let $X_K$ and $X_K^\tor$ be $\Sh_K(G)$ and $\Sh_K^\tor(G)$ base changed from $\ZZ_{(p)}$ to the ring of Witt vectors $W(k_0)$, respectively.
Let $\Xbar_K$ and $\Xbar_K^\tor$ over $k_0$ be their special fibers.
Let $\frakX_K^\tor$ be the formal completion of $X_K^\tor$ along $\Xbar_K^\tor$.
Let $\cX_K^\tor$ be the rigid generic fiber of $\frakX_K^\tor$ base changed from $W(k_0)[1/p]$ to $L_\sP$.

For a locally closed subset $\overline{U}\subset \Xbar_K^\tor$, denote by $]\overline{U}[$ the inverse image of $\overline{U}$ under the specialization map $\cX_K^\tor\rightarrow\Xbar_K^\tor$. 
Let $\Xbar_K^{\tor,\ord}\subset \Xbar_K^\tor$ be the ordinary locus.
Let $j\colon ]\Xbar_K^{\tor,\ord}[ \inj \cX_K^\tor$ be the natural inclusion of rigid analytic spaces.
For a coherent sheaf $\cF$ on $\cX_K^\tor$, define $j^\dagger \cF$ to be the sheaf on $\cX_K^\tor$ such that, for all admissible open subset $U\subset \cX_K^\tor$ we have
\[
 \Gamma(U,j^\dagger\cF) \defeq \varinjlim_V \Gamma(V\cap U,\cF),
\]
where $V$ runs through a fundamental system of strict neighborhoods of $]\Xbar_K^{\tor,\ord}[$ in $\cX_K^\tor$.

Let $(\kunder,w)$ be a weight.
Assume $K$ does not necessarily satisfy (\ref{cond:level}), but choose $K'\subset K$ which does.
Define the space of \emph{overconvergent Hilbert modular forms of weight $(\kunder,w)$ and level $K$ with coefficients in $L_\sP$} to be
\[
 M^\dagger_{(\kunder,w)}(K,L_\sP) \defeq H^0(\cX_{K'}^\tor, j^\dagger \omegaunder^{(\kunder,w)})^{K/K'},
\]
and the subspace of overconvergent cuspidal Hilbert modular forms to be
\[
 S^\dagger_{(\kunder,w)}(K,L_\sP) \defeq H^0(\cX_{K'}^\tor, j^\dagger \omegaunder^{(\kunder,w)}(-\sfD))^{K/K'}.
\]

Note that the theta operator $\Theta_{\tau,k_\tau-1}$ defined in Section~\ref{subsec:theta} induces a map
\[
 \Theta_{\tau,k_\tau-1}\colon M^\dagger_{(s_{\Sigma_\infty\setminus\{\tau\}}\cdot \kunder,w)}(K,L_\sP)
\rightarrow
S^\dagger_{(\kunder,w)}(K,L_\sP).
\]
We write $\Theta_\kunder$ for the sum of these maps over $\tau\in\Sigma_\infty$
\[
 \Theta_\kunder \colon \Directsum_{\tau\in\Sigma_\infty} M^\dagger_{(s_{\Sigma_\infty\setminus\{\tau\}}\cdot \kunder,w)}(K,L_\sP)
\xrightarrow{\sum_{\tau\in\Sigma_\infty}\Theta_{\tau,k_\tau-1}} 
S^\dagger_{(\kunder,w)}(K,L_\sP).
\]

We recall the theory of canonical subgroups for Hilbert modular varieties (\cite{GK12}, \cite[Sec.\ 3.11]{TX16}), so that we may define \emph{classical} overconvergent Hilbert modular forms, as well as the partial Frobenius $\Fr_\frakp$ and $U_\frakp$-operator later in Section~\ref{subsec:FrU}.
Let $K=K^pK_p$ with $K_p$ hyperspecial.
Over the special fiber $\Xbar_K$, the Verschiebung $(\cA^\sa_{k_0})^{(p)} \rightarrow \cA^{\sa}_{k_0}$ induces an $\cO_F$-linear map on the $\cO_{\Xbar_K}$-modules of invariant differential $1$-forms
$\omegaunder \rightarrow \omegaunder^{(p)}$,
and hence a map
\[
 h_\tau \colon \omegaunder_\tau \rightarrow \omegaunder^p_{\sigma^{-1}\circ \tau}
\]
for each $\tau\in\Sigma_\infty$.
Here $\sigma$ is the Frobenius on $k_0$ and $\sigma$ acts on $\Sigma_\infty$ by using the fixed embedding $\iota_p\colon \CC \xrightarrow{\sim} \Qpbar$ to identify $\Sigma_\infty$ with the set of $p$-adic embeddings $\Hom(F,\Qpbar)$, and with $\Hom(\cO_F, k_0)$ since $p$ is unramified in $F$.
In other word, we have $h_\tau\in H^0(\Xbar_K, \omegaunder^p_{\sigma^{-1}\circ \tau}\tensor \omegaunder_\tau^{-1})$, called the \emph{partial Hasse invariant}.
Let $x\in \cX_K^{\tor}$.
For each $\tau\in\Sigma_\infty$, let $\widetilde{h}_\tau$ be a Zariski local lift of the partial Hasse invariant of $h_\tau$ at $x$.
Goren--Kassaei defined a tuple of numbers $\underline{\nu}(x) = (\nu_\tau(x))_{\tau\in\Sigma_\infty}\in \QQ^{\Sigma_\infty}$ by (\cite[Sec.\ 4.2]{GK12})
\[
 \nu_\tau(x) = \min\{ 1, \val_p\widetilde{h}_\tau(x) \}.
\]
For any $\underline{r}\in\QQ^{\Sigma_p}$, there is a strict neighborhood 
\[
 {]\Xbar^{\tor,\ord}[}_{\underline{r}} \defeq \{ x\in \Xbar^\tor \mid p\nu_{\sigma^{-1}\circ \tau_\frakp}(x) + \nu_{\tau_\frakp}(x) < r_\frakp, \forall \frakp\in\Sigma_p \}
\]
of $]\Xbar^{\tor,\ord}[$.
For $\underline{r}\in\QQ^{\Sigma_p}$ such that $r_\frakp<p$ for all $\frakp\in\Sigma_p$,
Goren--Kassaei proved that over $]\Xbar^{\tor,\ord}[_{\underline{r}}$
there exists an $\cO_F$-invariant finite flat subgroup scheme $\cC_\frakp\subset \cA^\sa[\frakp]$ \'etale locally isomorphic to $\cO_F/\frakp$, called the \emph{universal $\frakp$-canonical subgroup} (\cite[Theorem A.1.3]{GK12}), extending the multiplicative part of $\cA^\sa[\frakp]$.

Denote the Iwahori subgroup of $\GL_2(\cO_F\tensor_\ZZ \ZZ_p)$ by $\Iw_p$. 
Thus $\Iw_p = \prod_{\frakp\in \Sigma_p}\Iw_\frakp$, where 
\[
 \Iw_\frakp = \left\{ \begin{pmatrix}a&b\\c&d\end{pmatrix} \in \GL_2(\cO_{F_\frakp}) \mid c\congr 0 \bmod \frakp \right\}.
\]
\begin{lem} \label{lem:inj}
 There are canonical injections
\begin{align*}
 \iota\colon M_{(\kunder,w)}(K^p\Iw_p,L_\sP) & \inj M^\dagger_{(\kunder,w)}(K,L_\sP) \\
 \iota^{\textrm{cusp}}\colon S_{(\kunder,w)}(K^p\Iw_p,L_\sP) &\inj S^\dagger_{(\kunder,w)}(K,L_\sP)  
\end{align*}
\end{lem}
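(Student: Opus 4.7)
The plan is to realize the injections $\iota$ and $\iota^{\mathrm{cusp}}$ as pullback along a canonical section of the natural forgetful map $\pi\colon \cX_{K^p\Iw_p}^\tor \to \cX_K^\tor$, where the section is constructed from the theory of $\frakp$-canonical subgroups recalled just above the lemma.

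First, for sufficiently small $\underline{r}\in\QQ^{\Sigma_p}$, the universal $\frakp$-canonical subgroups $\cC_\frakp\subset \cA^\sa[\frakp]$ exist over the strict neighborhood ${]\Xbar_K^{\tor,\ord}[}_{\underline{r}}$. Since the only extra datum needed to promote a hyperspecial $K_p$-structure to an Iwahori $\Iw_p$-structure is a choice of $\cO_F$-invariant $\cO_F/\frakp$-subgroup of $\cA^\sa[\frakp]$ at each $\frakp\in\Sigma_p$, the collection $(\cC_\frakp)_{\frakp\in\Sigma_p}$ defines a morphism of rigid analytic spaces
\[
 s \colon {]\Xbar_K^{\tor,\ord}[}_{\underline{r}} \longrightarrow \cX_{K^p\Iw_p}^\tor
\]
splitting $\pi$ over this strict neighborhood. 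At the toroidal boundary the construction still works because, \'etale-locally near a cusp, $\cA^\sa$ is a Mumford-style extension of a constant abelian scheme by a torus, and the $\frakp$-canonical subgroup then coincides with the multiplicative part of the $\frakp$-torsion, which is the natural Iwahori datum on the cuspidal chart.

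For a classical form $f\in M_{(\kunder,w)}(K^p\Iw_p,L_\sP)$, represented by a section on $\cX_{K'}^\tor$ for some neat $K'\subset K^p\Iw_p$ satisfying (\ref{cond:level}), I would set $\iota(f)\defeq s^* f$ via the canonical isomorphism $s^*\omegaunder^{(\kunder,w)}\isom \left.\omegaunder^{(\kunder,w)}\right|_{{]\Xbar_K^{\tor,\ord}[}_{\underline{r}}}$ induced by pullback of the universal semi-abelian scheme. By the definition of $j^\dagger$ this produces an element of $H^0(\cX_K^\tor,j^\dagger\omegaunder^{(\kunder,w)})$; the canonicity of $s$ (it depends only on the underlying semi-abelian scheme) ensures that, after pulling back to $\cX_{K'}^\tor$, the resulting section is $K/K'$-equivariant, hence lies in $M^\dagger_{(\kunder,w)}(K,L_\sP)$. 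Running the same construction on $\omegaunder^{(\kunder,w)}(-\sfD)$ defines $\iota^{\mathrm{cusp}}$, using that $s$ respects the boundary divisor $\sfD$.

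Injectivity of both maps then follows from the $q$-expansion principle for Hilbert modular forms. Since $s$ extends through every cusp of $\cX_K^\tor$ and identifies Tate-style formal charts on the two sides (because at such a cusp the canonical subgroup is precisely the multiplicative part, matching the standard cuspidal Iwahori datum), the section $\iota(f)=s^*f$ has the same $q$-expansion at each cusp as $f$ does at the corresponding cusp of $\cX_{K^p\Iw_p}^\tor$. Therefore $\iota(f)=0$ forces all $q$-expansions of $f$ to vanish, so $f=0$; the cuspidal case is identical. The main obstacle, and the point requiring the most care, is the verification that $s$ extends compatibly across the toroidal boundary of the specific arithmetic compactifications used and interacts correctly with the cuspidal twist $\omegaunder^{(\kunder,w)}(-\sfD)$; this should fall out of the Mumford-degeneration description of the boundary charts but is the technical heart of the argument.
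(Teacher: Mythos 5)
Your proposal follows the same route as the paper: use the theory of $\frakp$-canonical subgroups to build a section $s$ of the natural projection $\cX_{K^p\Iw_p}^\tor \to \cX_K^\tor$ over the strict neighborhood ${]\Xbar_K^{\tor,\ord}[}_{\underline{r}}$, and define $\iota$ (resp.\ $\iota^{\mathrm{cusp}}$) as pullback along $s$ followed by the natural map into $H^0(\cX_K^\tor, j^\dagger\omegaunder^{(\kunder,w)})$. The only real deviation is in the injectivity step: you invoke the $q$-expansion principle, which requires the cusp-matching and boundary analysis you flag, whereas the paper instead observes directly that the zero locus of any nonzero section of $\omegaunder^{(\kunder,w)}$ has positive codimension and hence cannot contain the open image of $s$ — a cleaner finish that sidesteps the Mumford-degeneration discussion entirely.
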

\begin{proof}
For $\underline{r}\in\QQ^{\Sigma_p}$ such that $r_\frakp<p$ for all $\frakp\in\Sigma_p$,
the existence of canonical subgroups over ${]\Xbar_K^{\tor,\ord}[}_{\underline{r}}$ says that we have a section 
\[
 {]\Xbar_K^{\tor,\ord}[}_{\underline{r}} \inj \cX_{K^p\Iw_p}^\tor
\]
of the natural projection $\cX_{K^p\Iw_p}^\tor \rightarrow \cX_{K}^\tor$.
Hence pullback gives 
\[
 H^0(\cX_{K^p\Iw_p}^\tor,\omegaunder^{(\kunder,w)}) \rightarrow H^0({]\Xbar_K^{\tor,\ord}[}_{\underline{r}}, \omegaunder^{(\kunder,w)}).
\]
This is an injection because the zero set of any section of $\omegaunder^{(\kunder,w)}$ must have positive codimension.
The left hand side is exactly $M_{(\kunder,w)}(K^p\Iw_p,L_\sP)$ by rigid GAGA.
The right hand side admits a map into $\varinjlim_V H^0(V,\omegaunder^{(\kunder,w)}) = H^0(\cX_K^\tor, j^\dagger\omegaunder^{(\kunder,w)})$.
Hence we obtain the desired injection
\[
 \iota\colon M_{(\kunder,w)}(K^p\Iw_p,L_\sP) \inj M^\dagger_{(\kunder,w)}(K,L_\sP).
\]
The same proof works for the cuspidal case.
\end{proof}

\begin{defn} \label{defn:classical}
 An overconvergent cuspidal Hilbert modular form $f\in S^\dagger_{(\kunder,w)}(K,L_\sP)$ is called \emph{classical} if it lies in the image of $\iota^\textrm{cusp}$.
\end{defn}

\subsection{Rigid Cohomology}
Let $(\kunder,w)$ be a cohomological weight.
Let $\mathrm{Std}_{G,\tau} \colon G_\CC\isom(\GL_{2,\CC})^{\Sigma_\infty}\rightarrow \GL_{2,\CC}$ be the standard representation of $G$ of projection onto the $\tau$-factor, and $\check{\mathrm{Std}}_{G,\tau}$ its contragredient.
Write $\sF^{(\kunder,w)}$ for the automorphic vector bundle on $X_{K,\CC}$ coming from the representation
\[
\Tensor_{\tau\in\Sigma_\infty}\left(\Sym^{k_\tau-2}(\check{\mathrm{Std}}_{G,\tau})\tensor \det_\tau^{-\frac{w-k_\tau}2}\right)
\]
of $G_\CC$ (and in particular of $P_\CC$), which is trivial on $Z_{s,\CC}$.
The fact that $\sF^{(\kunder,w)}$ comes from a representation of $G_\CC$ and not just $P_\CC$ gives an integrable connection 
\[
 \nabla\colon \sF^{(\kunder,w)}\rightarrow \sF^{(\kunder,w)}\tensor \Omega^1_{X_{K,\CC}}.
\]
on $\sF^{(\kunder,w)}$ (\cite[Remark III.2.3(b)]{Milne}).

Let $R$ be an $\cO_{F^\mathrm{Gal},(p)}$-algebra.
Then we can define an integral model of $(\sF^{(\kunder,w)}, \nabla)$ on $X_{K,R}^\tor$
\[
 \sF^{(\kunder,w)} \defeq \Tensor_{\tau\in\Sigma_\infty} \left( \Sym^{k_\tau-2} \cH^1_\tau \tensor (\wedge^2\cH^1_\tau)^{\frac{w-k_\tau}2} \right).
\]
Note that 
\[
 \nabla\colon \sF^{(\kunder,w)}\rightarrow \sF^{(\kunder,w)}\tensor \Omega^1_{X_{K,R}^\tor}(\log \sfD).
\]
has log poles along $\sfD$.

Denote by $\DR^\bullet(\sF^{(\kunder,w)})$ the de Rham complex
\[
 \sF^{(\kunder,w)} \xrightarrow{\nabla} 
 \sF^{(\kunder,w)}\tensor \Omega^1_{X_{K,R}^\tor}(\log \sfD) 
 \xrightarrow{\nabla} \cdots \xrightarrow{\nabla} 
 \sF^{(\kunder,w)}\tensor \Omega^d_{X_{K,R}^\tor}(\log \sfD). 
\]
and by $\DR_c^\bullet(\sF^{(\kunder,w)})$ the complex $\DR^\bullet(\sF^{(\kunder,w)})$ tensored with $\cO_{\Sh_K^\tor(G)_R}(-\sfD)$.

Define
\[
 R\Gamma_\rig(\Xbar_K^{\tor,\ord},\sF^{(\kunder,w)})
 \defeq R\Gamma(\cX_K^\tor,j^\dagger \DR^\bullet(\sF^{(\kunder,w)})),
\]
and the compactly-supported version
\[
R\Gamma_{\rig,c}(\Xbar_K^{\tor,\ord},\sF^{(\kunder,w)})
\defeq R\Gamma(\cX_K^\tor,j^\dagger \DR_c^\bullet(\sF^{(\kunder,w)}))
\]
as objects in the derived category of $L$-vector spaces.
The their cohomology groups are
\[
 H^\star_\rig(\Xbar_K^{\tor,\ord},\sF^{(\kunder,w)}) \defeq \HH^\star(\cX_K^\tor,j^\dagger\DR^\bullet(\sF^{(\kunder,w)}))
\]
and 
\[
 H^\star_{\rig,c}(\Xbar_K^{\tor,\ord},\sF^{(\kunder,w)}) \defeq \HH^\star(\cX_K^\tor,j^\dagger\DR_c^\bullet(\sF^{(\kunder,w)}))
\]
where $\HH^\bullet$ denotes the hypercohomologies of the (compactly-supported) overconvergent de Rham complex.

\subsection{Classicality} \label{subsec:cl}
We recall the classicality results of Tian and Xiao (\cite{TX16}).
Let 
\[
\cH(K^p, L_\sP) = L_\sP[K^p \backslash G(\AA^{\infty,p}) /K^p]
\]
be the abstract prime-to-$p$ Hecke algebra of level $K^p$.
On $M^\dagger_{(\kunder,w)}(K,L_\sP)$ and $H^\ast_\rig(\Xbar_K^{\tor,\ord}, \sF^{(\kunder,w)})$, one may define the action of $\cH(K^p, L_\sP)$, $S_\frakp$, the partial Frobenius $\Fr_\frakp$ and the $U_\frakp$-operator (\cite[3.7--3.20]{TX16}).

One important ingredient of the classicality theorem is the following.
\begin{thm}[{\cite[Theorem 3.5]{TX16}}] \label{thm:OCandRigid}
There are isomorphisms of $L_\sP$-vector spaces
\[
 M^\dagger_{(\underline{k},w)}(K,L_\sP) / \Theta_\kunder \left(\Directsum_{\tau\in\Sigma_\infty}M^\dagger_{(s_{\Sigma_\infty\setminus\{\tau\}}\cdot \underline{k},w)}(K,L_\sP)\right)
\isom
H^d_\rig(\Xbar_K^{\tor,\ord}, \mathscr{F}^{(\underline{k},w)})\tensor_L L_\sP,
\]
and
\[
 S^\dagger_{(\underline{k},w)}(K,L_\sP) / \Theta_\kunder \left(\Directsum_{\tau\in\Sigma_\infty}S^\dagger_{(s_{\Sigma_\infty\setminus\{\tau\}}\cdot \underline{k},w)}(K,L_\sP)\right)
\isom
H^d_{\rig,c}(\Xbar_K^{\tor,\ord}, \mathscr{F}^{(\underline{k},w)})\tensor_L L_\sP
\]
equivariant under the actions of $\cH(K^p,L_\sP)$, $S_\frakp$, $\Fr_\frakp$ and $U_\frakp$.
\end{thm}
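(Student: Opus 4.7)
The plan is to resolve the coefficient sheaf $\sF^{(\underline{k},w)}$ by a BGG-type complex whose terms are precisely the automorphic vector bundles appearing in the statement, and then pass to the ordinary locus by applying $j^\dagger$. Concretely, following Faltings's dual BGG resolution adapted to $G = \Res_{\cO_F/\ZZ}\GL_2$, I would construct a quasi-isomorphism between the log de Rham complex $\DR^\bullet(\sF^{(\underline{k},w)})$ and a complex $B^\bullet(\underline{k},w)$ with
\[
 B^i(\underline{k},w) = \bigoplus_{J\subset \Sigma_\infty,\, |J|=i} \omegaunder^{(s_J\cdot \underline{k},w)},
\]
whose differential $B^i \to B^{i+1}$ is a signed sum of theta operators $\Theta_{\tau, k_\tau - 1}$ ranging over $\tau \notin J$. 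In particular the top differential $B^{d-1}\to B^d$ recovers exactly the map $\Theta_\kunder$ of the statement (since the subsets with $|J|=d-1$ are precisely $\Sigma_\infty\setminus\{\tau\}$). Establishing this BGG quasi-isomorphism is the first substantive step; it requires identifying the theta operators with the connecting morphisms of the Hodge filtration on $\sF^{(\underline{k},w)}$ and combining local Koszul resolutions of $\sF^{(\underline{k},w)}$ with the integrable connection $\nabla$.

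Granted the BGG quasi-isomorphism, exactness of $j^\dagger$ gives
\[
 H^\star_\rig(\Xbar_K^{\tor,\ord},\sF^{(\underline{k},w)}) = \HH^\star\bigl(\cX_K^\tor,\, j^\dagger B^\bullet(\underline{k},w)\bigr),
\]
which I would attack via the hypercohomology spectral sequence
\[
 E_1^{i,q} = H^q\bigl(\cX_K^\tor,\, j^\dagger B^i(\underline{k},w)\bigr) \Longrightarrow \HH^{i+q}\bigl(\cX_K^\tor,\, j^\dagger B^\bullet(\underline{k},w)\bigr).
\]
If the rows with $q > 0$ vanish, the sequence collapses onto $q = 0$ and $H^d_\rig$ becomes the $d$-th cohomology of $\Gamma(\cX_K^\tor, j^\dagger B^\bullet(\underline{k},w))$, which by construction is the cokernel of $\Theta_\kunder$, i.e.\ the left-hand side.

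The main obstacle is the acyclicity $H^q(\cX_K^\tor, j^\dagger \omegaunder^{(s_J\cdot \underline{k}, w)}) = 0$ for $q > 0$, which powers the degeneration above. I would prove this by exhibiting a cofinal system of quasi-Stein strict neighborhoods of $]\Xbar_K^{\tor,\ord}[$ built from the Goren--Kassaei invariants $\underline\nu$ (for instance using the sets ${]\Xbar_K^{\tor,\ord}[}_{\underline r}$ recalled in Section~\ref{subsec:OC}), applying Kiehl's theorem together with coherence of $\omegaunder^{(s_J\cdot \underline{k}, w)}$ to get vanishing of higher coherent cohomology on each such neighborhood, and finally commuting the direct limit defining $j^\dagger$ with cohomology. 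This quasi-Stein exhaustion argument is the technical crux.

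The cuspidal version is parallel: tensoring $B^\bullet(\underline{k},w)$ with $\cO_{X_K^\tor}(-\sfD)$ gives a BGG-type resolution of $\DR_c^\bullet(\sF^{(\underline{k},w)})$, and the same spectral sequence argument (using the same acyclicity, since twisting by $-\sfD$ preserves vanishing on quasi-Stein spaces) identifies the $d$-th cohomology with the cuspidal quotient $S^\dagger_{(\underline{k},w)}(K,L_\sP)/\Theta_\kunder\bigl(\bigoplus_\tau S^\dagger_{(s_{\Sigma_\infty\setminus\{\tau\}}\cdot\underline{k},w)}(K,L_\sP)\bigr)$. Equivariance for $\cH(K^p, L_\sP)$, $S_\frakp$, $\Fr_\frakp$ and $U_\frakp$ is automatic from naturality of the BGG construction under the finite étale Hecke correspondences, together with the $q$-expansion description of the theta operators, which makes their Hecke equivariance manifest.
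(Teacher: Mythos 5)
The statement is not proved in the paper at all; it is quoted verbatim as \cite[Theorem~3.5]{TX16}, so the only meaningful comparison is with the Tian--Xiao argument. Your outline does reconstruct that argument's scaffolding faithfully: a dual BGG quasi-isomorphism $\DR^\bullet(\sF^{(\kunder,w)})\simeq B^\bullet(\kunder,w)$ with $B^i=\bigoplus_{|J|=i}\omegaunder^{(s_J\cdot\kunder,w)}$ and theta-operator differentials, termwise application of $j^\dagger$, degeneration of the hypercohomology spectral sequence so that $H^d_\rig$ is the top cokernel of the global-sections complex, the cuspidal twist by $\cO(-\sfD)$, and Hecke equivariance by naturality of the correspondence action. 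The degree bookkeeping ($B^d=\omegaunder^{(\kunder,w)}$, $B^{d-1}=\bigoplus_\tau\omegaunder^{(s_{\Sigma_\infty\setminus\{\tau\}}\cdot\kunder,w)}$, top differential $=\Theta_\kunder$) is also right.

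There is, however, a genuine gap precisely at the step you yourself identify as the crux. The strict neighborhoods $]\Xbar_K^{\tor,\ord}[_{\underline r}$ inside $\cX_K^\tor$ are \emph{not} quasi-Stein. Over each cusp the semi-abelian scheme $\cA^\sa$ degenerates to a torus, which is ordinary, so all partial Hasse invariants are invertible there and $\nu_\tau\equiv 0$ along the boundary. Hence the entire (proper, toric, positive-dimensional) toroidal boundary $\sfD$ lies inside $]\Xbar_K^{\tor,\ord}[_{\underline r}$ for every $\underline r$, and a rigid space containing proper positive-dimensional subvarieties cannot be quasi-Stein. Kiehl's vanishing theorem therefore does not apply directly, and the asserted vanishing $H^q(\cX_K^\tor, j^\dagger\omegaunder^{(s_J\cdot\kunder,w)})=0$ for $q>0$ does not follow from your exhaustion argument. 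The standard remedy (and what Tian--Xiao in effect do) is to pass to the \emph{minimal} compactification $\cX_K^{\min}$, on which the ordinary locus is affine because it is the nonvanishing locus of the total Hasse invariant, a section of an ample power of the Hodge bundle, and to combine quasi-Stein vanishing there with vanishing of the higher direct images $R^q\pi_*$ of $\omegaunder^{(s_J\cdot\kunder,w)}$ and its $(-\sfD)$-twist along $\pi\colon\cX_K^\tor\to\cX_K^{\min}$. Without this detour the spectral sequence is not shown to degenerate, and the identification of $H^d_\rig$ with the cokernel of $\Theta_\kunder$ is unjustified.
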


Let $\frakN$ be an integral ideal of $\cO_F$ prime to $p$.
Put
\[
 K_1(\frakN) = \left\{\begin{pmatrix}a&b\\c&d\end{pmatrix}\in \GL_2(\hat\cO_F) \mid a\congr1, c\congr0 \bmod \frakN \right\},
\]
and $K_1(\frakN)^p$ its prime-to-$p$ part.

\begin{thm}[{\cite[Theorem 6.9]{TX16}}] \label{thm:strongcl}
 Let $f\in S^\dagger_{(\kunder,w)}(K_1(\frakN), L_\sP)$ be an overconvergent cuspidal Hilbert Hecke eigenform.
For each prime $\frakp\in\Sigma_p$, let $\lambda_\frakp$ be the $U_\frakp$-eigenvalue of $f$.
 If 
 \[
  \val_p(\lambda_\frakp)<\frac{w+k_{\tau_\frakp}-2}2
 \]
 for each $\frakp$, then $f$ lies in $S_{(\kunder,w)}(K_1(\frakN)^p\Iw_p,L_\sP)$.
\end{thm}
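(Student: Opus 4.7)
The plan is to establish classicality by passing through rigid cohomology via Theorem~\ref{thm:OCandRigid} and then exploiting a Hodge-theoretic slope decomposition, in the spirit of Coleman's proof of classicality for $F=\QQ$. Before invoking rigid cohomology, I would first verify that the eigensystem of $f$ cannot appear in the image of any $\Theta_{\tau,k_\tau-1}$ under the slope hypothesis: if $f = \Theta_{\tau,k_\tau-1}(g)$ and $\tau=\tau_\frakp$, then comparing $q$-expansions shows that the $U_\frakp$-eigenvalue of $g$ is $\lambda_\frakp/p^{k_\tau-1}$, which would force $\val_p(\lambda_\frakp(g)) < \frac{w-k_\tau}{2}$. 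This can be ruled out by a lower-bound argument for $\frakp$-slopes of overconvergent eigenforms of weight $(s_{\Sigma_\infty \setminus \{\tau\}}\cdot \kunder, w)$, obtained from integrality/boundedness of $q$-expansions on a fixed strict neighborhood of the ordinary locus, so that the eigensystem of $f$ has nonzero image in the quotient $S^\dagger_{(\kunder,w)}/\Theta_\kunder(\bigoplus_\tau S^\dagger_{(s_{\Sigma_\infty\setminus\{\tau\}}\cdot\kunder,w)})$.

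By Theorem~\ref{thm:OCandRigid}, this quotient is identified $(\cH(K^p,L_\sP), \Fr_\frakp, U_\frakp)$-equivariantly with $H^d_{\rig,c}(\Xbar_K^{\tor,\ord}, \sF^{(\kunder,w)})$. The second step is to lift the class of $f$ from rigid cohomology to the algebraic de Rham cohomology $H^d_\dR(X_K^\tor/L_\sP, \sF^{(\kunder,w)}(-\sfD))$ of the full Shimura variety, either directly by extending the overconvergent de Rham complex or by routing through a crystalline intermediate. On the de Rham side one has the BGG filtration, whose associated graded pieces are $H^{d-\#J}(X_K^\tor, \omegaunder^{(s_J\cdot\kunder,w)}(-\sfD))$ for $J\subset \Sigma_\infty$, and this filtration is $U_\frakp$-equivariant.

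The third step is a slope computation on each BGG piece. For $J = \Sigma_\infty$ the piece is $H^0(X_K^\tor,\omegaunder^{(\kunder,w)}(-\sfD)) = S_{(\kunder,w)}(K^p\Iw_p,L_\sP)$, carrying classical cusp forms whose $U_\frakp$-slopes lie in $[\frac{w-k_{\tau_\frakp}}{2},\frac{w+k_{\tau_\frakp}-2}{2}]$. For any $J \subsetneq \Sigma_\infty$ and any $\frakp$ with $\tau_\frakp \notin J$, the $\tau_\frakp$-component of $s_J\cdot \kunder$ is $2-k_{\tau_\frakp}$, and one computes from the local model of $U_\frakp$ on the automorphic bundle that the $U_\frakp$-slopes on this graded piece are bounded below by $\frac{w+k_{\tau_\frakp}-2}{2}$. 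Thus the strict inequality $\val_p(\lambda_\frakp) < \frac{w+k_{\tau_\frakp}-2}{2}$, assumed for every $\frakp$, excludes every non-classical BGG piece. The eigensystem of $f$ therefore occurs in $S_{(\kunder,w)}(K^p\Iw_p,L_\sP)$, and tracing backwards through Theorem~\ref{thm:OCandRigid} and the injection $\iota^{\textrm{cusp}}$ of Lemma~\ref{lem:inj} shows that $f$ itself lies in the classical subspace.

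The main obstacle will be the slope computation on each BGG piece: one must carefully track how the Hodge-filtration twist by $\omegaunder_{\tau_\frakp}^{k_{\tau_\frakp}-1}$ translates into an explicit $p$-power shift in the $U_\frakp$-eigenvalues on $H^\bullet(\omegaunder^{(s_J\cdot\kunder,w)}(-\sfD))$, and ensure that the resulting strict slope separation is preserved after the comparison between rigid and de Rham cohomology, which is not tautological because passage from $\Xbar_K^{\tor,\ord}$ to $X_K^\tor$ can a priori introduce contributions from the non-ordinary locus. A secondary subtlety is that Theorem~\ref{thm:OCandRigid} is an identification of possibly infinite-dimensional $L_\sP$-spaces with Hecke action, so one must verify that finite-slope decompositions are compatible with the quotient by $\Theta_\kunder$.
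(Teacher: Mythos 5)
The paper does not prove this statement; Theorem~\ref{thm:strongcl} is quoted verbatim from Tian--Xiao \cite[Theorem~6.9]{TX16} and used as a black box (notably in the proof of Proposition~\ref{prop:critcl}). There is therefore no internal proof to compare your proposal against.

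That said, a few remarks on the proposal itself. Your overall outline --- pass to the quotient by $\Theta_\kunder$, identify it with $H^d_{\rig,c}$ via Theorem~\ref{thm:OCandRigid}, compare with de~Rham cohomology, and run a dual BGG slope analysis --- is indeed the general shape of the Coleman-style argument Tian--Xiao use. But your first step contains a hidden difficulty: $\Theta_\kunder$ is a \emph{sum} $\sum_\tau \Theta_{\tau,k_\tau-1}$, so showing that $f$ is not in its image requires more than a $q$-expansion comparison with a single $\Theta_{\tau,k_\tau-1}$; one has to contend with linear combinations across the $\tau$-components (the paper handles exactly this kind of issue in Proposition~\ref{prop:3to4} via generalized eigenforms, and the slope lower bound you want is \cite[Corollary~3.24]{TX16}). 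Also, your step ``lift the class of $f$ to algebraic de~Rham cohomology of the full Shimura variety'' is genuinely nontrivial and is where a large part of the Tian--Xiao machinery lives; stating it as a one-line maneuver understates the difficulty. Since the paper treats the theorem as an external input, I would simply record it with the citation rather than attempt an internal proof.
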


\subsection{Definition of $\Fr_\frakp$, $U_\frakp$ and $S_\frakp$} \label{subsec:FrU}
We will need to compute with $\Fr_\frakp$, $U_\frakp$ and $S_\frakp$, so we recall how their actions are defined on the rigid cohomology (\cite[3.12-3.18]{TX16}).
Recall that given $\underline{r}\in\QQ^{\Sigma_p}$ with $r_\frakp<p$ for all $\frakp\in\Sigma_p$, we have the universal $\frakp$-canonical subgroup $\cC_\frakp\subset \cA^{\sa}[\frakp]$ over the strict neighborhood $]\Xbar_K^{\tor,\ord}[_{\underline{p}}$ of $]\Xbar_K^{\tor,\ord}[$ 
(Section~\ref{subsec:OC}).

Let $]\Xbar_K^{\tor,\ord}[_{\underline{r}}^{\frakp\mathrm{-can}}$ be the rigid analytic space $]\Xbar_K^{\tor,\ord}[_{\underline{r}}$, but regarded as classifying $\cA^{\sa}$ along with its $\frakp$-canonical subgroup $\cC_\frakp$.
Let $\underline{r}^\frakp\in\QQ^{\Sigma_p}$ be such that $r_\frakq^\frakp = r_\frakq$ for $\frakq \neq \frakp\in\Sigma_\frakp$ and $r_\frakp^\frakp = p \cdot r_\frakp$.
If $r_\frakq^\frakp<p$ for all $\frakq\in\Sigma_p$,
then we have 
\begin{center}
 \begin{tikzcd}
  & ]\Xbar_K^{\tor,\ord}[_{\underline{r}}^{\frakp\mathrm{-can}} \ar[ld,"p_1"'] \ar[rd,"p_2"] & \\
  ]\Xbar_K^{\tor,\ord}[_{\underline{r}} & & ]\Xbar_K^{\tor,\ord}[_{\underline{r}^\frakp}
 \end{tikzcd}
\end{center}
Here $p_1$ is given by $(\cA^{\sa}, \cC_\frakp) \mapsto \cA^{\sa}$ and $p_2$ is given by $(\cA^{\sa}, \cC_\frakp) \mapsto \cA^{\sa}/\cC_\frakp$ (\cite[Theorem 5.4.4(1), Appendix]{GK12}).
Note that $p_1$ is an isomorphism.
We have an isogeny 
\[
 \pi_\frakp \colon \cA^\sa \isom p_1^\ast\cA^{\sa} \rightarrow p_1^\ast\cA^{\sa}/\cC_\frakp =  p_2^\ast\cA^{\sa}.
\]

Let $]\Xbar_K^{\tor,\ord}[_{\underline{r}^\frakp}^{\frakp\mathrm{-nc}}$ be the rigid analytic space over $]\Xbar_K^{\tor,\ord}[_{\underline{r}^\frakp}$ classifying $\cO_F$-invariant finite flat subgroup schemes $\cD \subset \cA^\sa[\frakp]$ \'etale locally isomorphic to $\cO_F/\frakp$ and which are not the $\frakp$-canonical subgroup $\cC_\frakp$.
Then we have
\begin{center}
 \begin{tikzcd}
  & ]\Xbar_K^{\tor,\ord}[_{\underline{r}^{\frakp}}^{\frakp\mathrm{-nc}} \ar[ld,"q_1"'] \ar[rd,"q_2"] & \\
  ]\Xbar_K^{\tor,\ord}[_{\underline{r}^\frakp} & & ]\Xbar_K^{\tor,\ord}[_{\underline{r}}
 \end{tikzcd}
\end{center}
Here $q_1$ is given by $(\cA^{\sa}, \cD) \mapsto \cA^{\sa}$ and $q_2$ is given by $(\cA^{\sa}, \cD) \mapsto \cA^{\sa}/\cD$ (\cite[Theorem 5.4.4(4), Appendix]{GK12}).
Note that $q_2$ is an isomorphism.
We have an isogeny 
\[
 \check\pi_\frakp \colon q_1^\ast\cA^{\sa} \rightarrow q_1^\ast\cA^{\sa}/\cD =  q_2^\ast\cA^{\sa}.
\]

Let $\varpi_\frakp$ be an uniformizer of $\frakp$.
Let $[\varpi_\frakp]$ be the isogeny of $\cA^{\sa}$ given by multiplication by $\varpi_\frakp$.
The isogeny induces an automorphism
\[
 r \colon ]\Xbar_K^{\tor,\ord}[_{\underline{r}}\rightarrow ]\Xbar_K^{\tor,\ord}[_{\underline{r}}.
\]
We also let $\widetilde{r}$ be the automorphism induced by $[\varpi_\frakp]$ on $]\Xbar_K^{\tor,\ord}[^{\frakp\text{-nc}}_{\underline{r}^\frakp}$.
Hence we have
\[
 [\varpi_\frakp] \colon \cA^{\sa} \rightarrow r^\ast \cA^{\sa}.
\]
Since the canonical subgroup of $q_2^\ast\cA^{\sa}$ is $q_1^\ast\cA^{\sa}[\frakp]/\cD$, we have $q_2^\ast\pi_\frakp \circ \check\pi_\frakp = [\varpi_\frakp]$ as well as  $p_2 \circ p_1^{-1} \circ q_2 = q_1\circ \widetilde{r}$.

The isogeny $\pi_\frakp$ induces a map on the relative first de Rham cohomologies
\[
 \pi_\frakp^\ast \colon p_2^\ast \cH^1 \rightarrow p_1^\ast \cH^1 \isom \cH^1
\]
and hence on the automorphic vector bundles
\[
 \pi_\frakp^\ast \colon p_2^\ast \sF^{(\kunder,w)} \rightarrow \sF^{(\kunder,w)}
\]
and on the de Rham complexes
\[
 \pi_\frakp^\ast \colon \DR^\bullet(p_2^\ast \sF^{(\kunder,w)}) \rightarrow \DR^\bullet(\sF^{(\kunder,w)}).
\]
We then define the partial Frobenius $\Fr_\frakp$ as ${p_1}_\ast \circ \pi_\frakp^\ast \circ p_2^\ast$, or more precisely
\begin{center}
\scriptsize
 \begin{tikzcd}[column sep=small]
 R\Gamma(]\Xbar_K^{\tor,\ord}[_{\underline{r}^\frakp}, \DR^\bullet(\sF^{(\kunder,w)}))   \ar[d, "p_2^\ast"'] \ar[rr, dashrightarrow, "\Fr_\frakp"]
&  
& R\Gamma(]\Xbar_K^{\tor,\ord}[_{\underline{r}}, \DR^\bullet(\sF^{(\kunder,w)}))
\\
R\Gamma(]\Xbar_K^{\tor,\ord}[_{\underline{r}}^{\frakp\mathrm{-can}}, \DR^\bullet(p_2^\ast \sF^{(\kunder,w)}))
\ar[r,"\pi_\frakp^\ast"]
& R\Gamma(]\Xbar_K^{\tor,\ord}[_{\underline{r}}^{\frakp\mathrm{-can}}, \DR^\bullet(p_1^\ast \sF^{(\kunder,w)})) \ar[r,equal]
&
R\Gamma(]\Xbar_K^{\tor,\ord}[_{\underline{r}}, \DR^\bullet({p_1}_\ast p_1^\ast \sF^{(\kunder,w)})) \ar[u,"{p_1}_\ast"'],
 \end{tikzcd}
\end{center}
where $p_2^\ast$ comes from the adjunction map $\DR^\bullet(\sF^{(\kunder,w)}) \rightarrow \DR^\bullet({p_2}_\ast p_2^\ast \sF^{(\kunder,w)})$, and $p_1^\ast$ is the trace map $\DR^\bullet({p_1}_\ast p_1^\ast \sF^{(\kunder,w)}) \rightarrow \DR^\bullet(\sF^{(\kunder,w)})$ of the finite flat $p_1$ (in fact $p_1$ is an isomorphism).
Taking $\varinjlim$ over $\underline{r}$ and taking cohomology, we then get $\Fr_\frakp$ on the rigid cohomology
\[
 \Fr_\frakp \colon H^\star_\rig(\Xbar_K^{\tor,\ord},\sF^{(\kunder,w)}) \rightarrow H^\star_\rig(\Xbar_K^{\tor,\ord},\sF^{(\kunder,w)}).
\]

Similarly, we define the $U_\frakp$-operator as ${q_1}_\ast \circ \check\pi_\frakp^\ast \circ q_2^\ast$ on $H^\star_\rig(\Xbar_K^{\tor,\ord},\sF^{(\kunder,w)})$.
We also define the $S_\frakp$-operator as $[\varpi_\frakp]^\ast \circ r^\ast$ on $H^\star_\rig(\Xbar_K^{\tor,\ord},\sF^{(\kunder,w)})$.

\subsection{The adjoint of $U_\frakp$}
Let $H^d_{\rig,!}(\Xbar^{\tor,\ord}_{K_1(\frakN)},\sF^{(\kunder,w)})$ be the interior cohomology, i.e., 
\[
 H^d_{\rig,!}(\Xbar^{\tor,\ord}_{K_1(\frakN)},\sF^{(\kunder,w)}) \defeq
\im \left[
H^d_{\rig,c}(\Xbar^{\tor,\ord}_{K_1(\frakN)},\sF^{(\kunder,w)}) \rightarrow
H^d_{\rig}(\Xbar^{\tor,\ord}_{K_1(\frakN)},\sF^{(\kunder,w)}) \right].
\]
The Poincar\'e duality
\[
 H^d_{\rig}(\Xbar_{K_1(\frakN)}^{\tor,\ord}, \mathscr{F}^{(\underline{k},w)}) 
\times 
H^d_{\rig,c}(\Xbar_{K_1(\frakN)}^{\tor,\ord}, \check{\mathscr{F}}^{(\underline{k},w)}) 
\rightarrow 
H^{2d}_{\rig,c}(\Xbar_{K_1(\frakN)}^{\tor,\ord}, L)
\isom L,
\]
induces a perfect pairing on the interior cohomologies.
One can twist the second factor by the Atkin--Lehner operator $w_\frakN$ to make the pairing equivariant for Hecke operators $T_\frakl$ and $S_\frakl$ where $\frakl$ is a prime of $F$ not dividing $p\frakN$, as well as $S_\frakp$ (\cite[Sec.\ 3.9]{Dimit}).
More precisely, let $\ast$ be the involution $g\mapsto \det(g)^{-1}g$ on $G=\Res_{\cO_F/\ZZ}\GL_2$, which induces an isomorphism
\[
 \ast \colon H^d_{\rig,!}(\Xbar_{K_1(\frakN)^\ast}^{\tor,\ord}, \sF^{(\kunder,w)}) \xrightarrow{\sim} H^d_{\rig,!}(\Xbar_{K_1(\frakN)}^{\tor,\ord}, \check\sF^{(\kunder,w)}).
\]
Let $w_\frakN \in G(\ZZ)=\GL_2(\cO_F)$ be a matrix in $\begin{pmatrix} & -1\\ \frakN &\end{pmatrix}$.
Then $w_\frakN K_1(\frakN)^\ast w_\frakN^{-1} = K_1(\frakN)$, and thus 
\[
 w_\frakN \colon \Xbar_{K_1(\frakN)^\ast}^{\tor,\ord} 
\rightarrow \Xbar_{w_\frakN^{-1} K_1(\frakN)^\ast w_\frakN}^{\tor,\ord} = \Xbar_{K_1(\frakN)}^{\tor,\ord}.
\]
This induces an isomorphism on cohomologies
\[
 w_\frakN \colon  H^d_{\rig,!}(\Xbar_{K_1(\frakN)}^{\tor,\ord}, \sF^{(\kunder,w)}) \xrightarrow{\sim}  H^d_{\rig,!}(\Xbar_{K_1(\frakN)^\ast}^{\tor,\ord}, \sF^{(\kunder,w)}). 
\]
Applying the isomorphism $(*\circ w_\frakN)^{-1}$ on the second factor, the coefficient sheaf $\check\sF^{(\kunder,w)}$ is dualized back to $\sF^{(\kunder,w)}$ and the Poincar\'e pairing becomes Hecke equivariant.
We denote the modified Poincar\'e pairing by 
\[
 (\;,\;) \colon 
H^d_{\rig,!}(\Xbar_{K_1(\frakN)}^{\tor,\ord}, \mathscr{F}^{(\underline{k},w)})
\times 
H^d_{\rig,!}(\Xbar_{K_1(\frakN)}^{\tor,\ord}, \mathscr{F}^{(\underline{k},w)}) 
\rightarrow 
L.
\]
We want to compute the adjoint operator of $U_\frakp$ with respect to the modified Poincar\'e pairing.

\begin{lem} \label{lem:Upadj}
 The adjoint operator of $U_\frakp$ with respect to $(\;,\;)$ is $\Fr_\frakp$.
\end{lem}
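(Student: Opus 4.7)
The plan is to compute the adjoint of $U_\frakp$ by tracing through Poincar\'e duality applied to the correspondence defining $U_\frakp$, and then to identify the resulting operator with $\Fr_\frakp$ using the geometric relations between the two correspondences.

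First I would recall the general formalism: for a finite flat correspondence $Y \xleftarrow{\alpha} Z \xrightarrow{\beta} X$ between smooth rigid spaces of the same dimension $d$, the operator $\alpha_{\ast} \circ \gamma^{\ast} \circ \beta^{\ast}$ on interior cohomology (where $\gamma$ is a map of coefficient sheaves) has an adjoint, under Poincar\'e duality, of the form $\beta_{\ast} \circ (\gamma^{\vee})^{\ast} \circ \alpha^{\ast}$, where $\gamma^{\vee}$ denotes the dual of $\gamma$ on the dual sheaves. Applying this with $(\alpha, \beta, \gamma) = (q_1, q_2, \check\pi_\frakp)$, the adjoint of $U_\frakp$ with respect to the \emph{unmodified} Poincar\'e pairing (valued in $\check{\mathscr{F}}^{(\kunder,w)}$) is ${q_2}_{\ast} \circ (\check\pi_\frakp^{\vee})^{\ast} \circ q_1^{\ast}$ on the dual correspondence.

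Next I would use the geometric identities provided right after the definition of the $U_\frakp$-correspondence, namely $q_2^{\ast}\pi_\frakp \circ \check\pi_\frakp = [\varpi_\frakp]$ and $p_2\circ p_1^{-1}\circ q_2 = q_1\circ \widetilde{r}$. The first identity says that $\check\pi_\frakp$ and $\pi_\frakp$ are complementary isogenies whose composition is multiplication by $\varpi_\frakp$ on the semiabelian scheme; the second identifies the $U_\frakp$-correspondence with the $\Fr_\frakp$-correspondence up to the automorphism $\widetilde{r}$ induced by $[\varpi_\frakp]$. Using $q_2$ as the isomorphism to move everything back to $]\overline{X}_K^{\tor,\ord}[_{\underline{r}}$, one rewrites ${q_2}_{\ast} (\check\pi_\frakp^{\vee})^{\ast} q_1^{\ast}$ as ${p_1}_{\ast} \pi_\frakp^{\ast} p_2^{\ast}$ composed with the operator $[\varpi_\frakp]^{\ast}\circ r^{\ast}$ coming from the discrepancy between $\check\pi_\frakp^{\vee}$ and $\pi_\frakp$ via the relation $q_2^{\ast}\pi_\frakp \circ \check\pi_\frakp = [\varpi_\frakp]$. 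By the very definition of $S_\frakp$ in Section~\ref{subsec:FrU}, this discrepancy is exactly $S_\frakp$; hence under the unmodified pairing the adjoint of $U_\frakp$ is $\Fr_\frakp \circ S_\frakp$ (or equivalently $S_\frakp\circ\Fr_\frakp$, since partial Frobenius and $S_\frakp$ commute).

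Finally I would account for the twist $(\ast\circ w_\frakN)^{-1}$ that defines the modified pairing $(\,,\,)$. The involution $\ast\colon g\mapsto \det(g)^{-1}g$ on $G$ intertwines $U_\frakp$ with $S_\frakp^{-1}U_\frakp$ (since it multiplies the lower triangular Iwahori coset by a matrix of determinant $\varpi_\frakp$), and the Atkin--Lehner element $w_\frakN$ commutes with the local $U_\frakp$-correspondence at the prime $\frakp\nmid \frakN$. Therefore passing from the unmodified to the modified Poincar\'e pairing introduces exactly one factor of $S_\frakp^{-1}$, which cancels the $S_\frakp$ obtained above and leaves $\Fr_\frakp$. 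The main obstacle will be this last bookkeeping step: verifying that the modification $(\ast\circ w_\frakN)^{-1}$ really produces the factor $S_\frakp^{-1}$ and not some other twist, which requires careful matching of the double coset description of $U_\frakp$ with the geometric correspondence ${q_1}_{\ast}\check\pi_\frakp^{\ast}q_2^{\ast}$ and checking weight-character contributions from the $(\kunder,w)$-twist on $\sF^{(\kunder,w)}$.
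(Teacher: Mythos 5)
Your overall plan matches the paper's: pass to the transpose correspondence, express $(\check\pi_\frakp^\ast)^\vee$ in terms of $\pi_\frakp^\ast$, and then account for the $\ast\circ w_\frakN$ twist. However both of the key intermediate identities in your sketch are off, and the right answer only appears because the two errors happen to compensate.

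First, the ``discrepancy'' between $(\check\pi_\frakp^\ast)^\vee$ and $\pi_\frakp^\ast$ is not $S_\frakp$: using $\check\sF^{(\kunder,w)} = \sF^{(\kunder,-w+4)}$ one has $\check\pi_\frakp^\ast\circ(\check\pi_\frakp^\ast)^\vee = \Nm_{F/\QQ}(\frakp)^{(-w+4)-2}$, while $\check\pi_\frakp^\ast\circ\pi_\frakp^\ast = [\varpi_\frakp]^\ast\circ\widetilde r^\ast = S_\frakp$. Dividing, $(\check\pi_\frakp^\ast)^\vee = \Nm_{F/\QQ}(\frakp)^{-(w-2)} S_\frakp^{-1}\,\pi_\frakp^\ast$, so the unmodified adjoint of $U_\frakp$ is $\Nm_{F/\QQ}(\frakp)^{-(w-2)} S_\frakp^{-1}\Fr_\frakp$ rather than $\Fr_\frakp\circ S_\frakp$ — you have both the inverse and a norm power missing, and the norm factor cannot be discarded since it depends on $w$.

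Second, the modification by $\ast\circ w_\frakN$ does not ``introduce exactly one factor of $S_\frakp^{-1}$'', and $w_\frakN$ does \emph{not} commute with the $\Fr_\frakp$-correspondence. What actually happens is that conjugation by $\ast\circ w_\frakN$ sends $S_\frakp^{-1}$ to $S_\frakp$, and separately $\Fr_\frakp\, w_\frakN = S_\frakp^{-1}\Nm_{F/\QQ}(\frakp)^{w-2}\, w_\frakN\, \Fr_\frakp$ (a Weil-pairing computation as in \cite[Intro.\ 8.II]{MW}, since $S_\frakp^{-1}\Nm_{F/\QQ}(\frakp)^{w-2}$ is the diamond operator at $\frakp$). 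Combining, $(\ast\circ w_\frakN)^{-1}\circ\bigl(\Nm_{F/\QQ}(\frakp)^{-(w-2)}S_\frakp^{-1}\Fr_\frakp\bigr)\circ(\ast\circ w_\frakN) = \Nm_{F/\QQ}(\frakp)^{-(w-2)}\,S_\frakp\cdot S_\frakp^{-1}\Nm_{F/\QQ}(\frakp)^{w-2}\,\Fr_\frakp = \Fr_\frakp$. Your version reaches $\Fr_\frakp$ only because the misplaced $S_\frakp$-power and the missing $\Nm_{F/\QQ}(\frakp)^{w-2}$ in the unmodified adjoint are exactly offset by an equally wrong (and too simple) account of the twist; to make this a proof you need both the duality computation with the weight-$(-w+4)$ twist and the noncommutation of $w_\frakN$ with $\Fr_\frakp$.
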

\begin{proof}

The $U_\frakp$-operator on $H^d_{\rig, !}(\Xbar_{K_1(\frakN)}^{\tor,\ord}, \sF^{(\kunder,w)})$ is defined through the $U_\frakp$-correspondence using
\begin{align*}
 q_2^\ast &\colon \sF^{(\kunder,w)} \rightarrow {q_2}_\ast q_2^\ast \sF^{(\kunder,w)} \\
 \check\pi_\frakp^\ast &\colon q_2^\ast\sF^{(\kunder,w)} \rightarrow q_1^\ast \sF^{(\kunder,w)}\\
 {q_1}_\ast &\colon {q_1}_\ast q_1^\ast \sF^{(\kunder,w)} \rightarrow \sF^{(\kunder,w)}.
\end{align*}
Hence on $H^d_{\rig, !}(\Xbar_{K_1(\frakN)}^{\tor,\ord}, \check \sF^{(\kunder,w)})$, the adjoint operator of $U_\frakp$ with respect to the (non-twisted) Poincar\'e pairing is defined through the transpose correspondence, i.e. using
\begin{align*}
 q_1^\ast &\colon \check\sF^{(\kunder,w)} \rightarrow {q_1}_\ast q_1^\ast \check\sF^{(\kunder,w)} \\
 (\check\pi_\frakp^\ast)^\vee &\colon q_1^\ast\check\sF^{(\kunder,w)} \rightarrow q_2^\ast \check\sF^{(\kunder,w)}\\
 {q_2}_\ast &\colon {q_2}_\ast q_2^\ast \check\sF^{(\kunder,w)} \rightarrow \check\sF^{(\kunder,w)}.
\end{align*}

We can write $(\check\pi_\frakp^\ast)^\vee$ in another way:
We have seen that $p_2 \circ p_1^{-1} \circ q_2 = q_1\circ \widetilde{r}$.
Also note that $\check \sF^{(\kunder,w)} = \sF^{(\kunder,-w+4)}$.
Hence we have the map
\[
 q_1^\ast \sF^{(\kunder,-w+4)} 
\xrightarrow[\sim]{\widetilde{r}^\ast} \widetilde r^\ast \circ q_1^\ast \sF^{(\kunder,-w+4)} 
= q_2^\ast \circ {p_1^{-1}}^\ast \circ p_2^\ast \sF^{(\kunder,-w+4)} \xrightarrow{\pi_\frakp^\ast} q_2^\ast \sF^{(\kunder,-w+4)}.
\]
Then in fact $(\check\pi_\frakp^\ast)^\vee = \Nm_{F/\QQ}(\frakp)^{-(w-2)} S_\frakp^{-1} \pi_\frakp^\ast$.
This is because $\check\pi_\frakp^\ast \circ (\check\pi_\frakp^\ast)^\vee$ is multiplication by $\Nm_{F/\QQ}(\frakp)^{(-w+4)-2}$ and $\check\pi_\frakp^\ast \circ \pi_\frakp^\ast = [\varpi_\frakp]^\ast \circ \widetilde{r}^\ast$ is the $S_\frakp$-operator.

Hence we conclude that the adjoint operator of $U_\frakp$ with respect to the (non-twisted) Poincar\'e pairing is 
\begin{align*}
  (\Nm_{F/\QQ}(\frakp)^{-(w-2)}S_\frakp^{-1}) \cdot {q_2}_\ast \circ \pi_\frakp^\ast \circ \widetilde{r}^\ast \circ q_1^\ast 
&= (\Nm_{F/\QQ}(\frakp)^{-(w-2)}S_\frakp^{-1}) \cdot {q_2}_\ast \circ \pi_\frakp^\ast \circ q_2^\ast \circ {p_1^{-1}}^\ast \circ p_2^\ast \\
&= (\Nm_{F/\QQ}(\frakp)^{-(w-2)}S_\frakp^{-1}) \cdot {q_2}_\ast \circ q_2^\ast \circ {p_1^{-1}}^\ast \circ \pi_\frakp^\ast \circ p_2^\ast \\
&= (\Nm_{F/\QQ}(\frakp)^{-(w-2)}S_\frakp^{-1}) \cdot {q_2}_\ast \circ q_2^\ast \circ {p_1}_\ast \circ \pi_\frakp^\ast \circ p_2^\ast \\
&= (\Nm_{F/\QQ}(\frakp)^{-(w-2)}S_\frakp^{-1}) \cdot {p_1}_\ast \circ \pi_\frakp^\ast \circ p_2^\ast \\
&=(\Nm_{F/\QQ}(\frakp)^{-(w-2)}S_\frakp^{-1}) \cdot \Fr_\frakp
\end{align*}
Here the second equality is because the morphism $\pi_\frakp^\ast$ induced by isogeny commutes with base change, and the fourth equality is because $q_2$ is an isomorphism and hence ${q_2}_\ast \circ q_2^\ast = \id$.

Now we consider the modified Poincar\'e pairing.
As in \cite[Sec.\ 3.9]{Dimit}, $S_\frakp^{-1}$ is transformed into $S_\frakp$ under conjugation by $w_\frakN\circ\ast$.
Fix an $N=\Nm_{F/\QQ}(\frakN)$-th root of unity $\zeta_N$.
The $K_1(\frakN)$-level parametrizes an $\frakN$-torsion point $P$ of a polarized HBAV $A$, and the Atkin-Lehner operator $w_\frakN$ maps $(A,P)$ to $(A/(P), Q)$, where $Q\in A[\frakN]$ paired with $P$ is mapped to $\zeta_N$ under the Weil pairing. 
We then have $\Fr_\frakp w_\frakN = S_\frakp^{-1} \Nm_{F/\QQ}(\frakp)^{w-2} w_\frakN \Fr_\frakp$ (\cite[Intro.\ 8.II]{MW}).
This follows from compatibility of the Weil pairing with isogenies and the fact that $S_\frakp^{-1} \Nm_{F/\QQ}(\frakp)^{w-2}$ is the diamond operator at $\frakp$, which maps $(A,P)$ to $(A,\varpi_\frakp \cdot P)$.
We conclude that the adjoint operator of $U_\frakp$ with respect to the modified Poincar\'e pairing is 
\begin{align*}
(\ast \circ w_\frakN)^{-1} \circ \left[ (\Nm_{F/\QQ}(\frakp)^{-(w-2)}S_\frakp^{-1}) \cdot \Fr_\frakp \right] \circ (\ast \circ w_\frakN)
&= \Nm_{F/\QQ}(\frakp)^{-(w-2)} S_\frakp \cdot S_\frakp^{-1} \Nm_{F/\QQ}(\frakp)^{w-2} \Fr_\frakp \\
&=\Fr_\frakp.
\end{align*}
\end{proof}

\subsection{Classicality in critical slope}
We deduce from Section~\ref{subsec:cl} some classicality results in the case $\val_p(\lambda_\frakp) = \frac{w+k_{\tau_\frakp}-2}2$.
\begin{defn}
 Let $M$ be an $L_\sP$-vector space on which $U_\frakp$ acts for all $\frakp\in\Sigma_p$, and $\alphaunder\in \QQ^{\Sigma_p}$.
 We write $M_\alphaunder$ for the \emph{slope $\alphaunder$ part of $M$.}
 Namely, $M_\alphaunder$ is the sub-$L_\sP$-vector space of $M$ consisting of $m\in M$ such that for all $\frakp\in\Sigma_p$, there exists a polynomial $P_\frakp(T)\in L_\sP[T]$ such that its roots in $\CC_p$ all have $p$-adic valuation $\alpha_\frakp$ and $P_\frakp(U_\frakp)$ annihilates $M$.
\end{defn}

The main result of this subsection is the following proposition.
\begin{prop} \label{prop:critcl}
 Let $\alphaunder\in \QQ^{\Sigma_p}$ such that $\alpha_\frakp \leq \frac{w+k_{\tau_\frakp}-2}2$ for all $\frakp\in\Sigma_p$.
 Then there is a Hecke equivariant isomorphism
\[
S_{(\kunder,w)}(K_1(\frakN)^p\Iw_p,L_\sP)_\alphaunder
\isom
 \left( H^d_{\rig,!}(\Xbar_{K_1(\frakN)}^{\tor,\ord}, \mathscr{F}^{(\underline{k},w)}) \tensor_L L_\sP \right)_\alphaunder.
\]
\end{prop}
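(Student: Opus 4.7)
The plan is to apply the slope-$\alphaunder$ functor to the cuspidal isomorphism in Theorem~\ref{thm:OCandRigid}, identify the resulting left-hand quotient with classical cusp forms using Theorem~\ref{thm:strongcl}, and convert $H^d_{\rig,c}$ to $H^d_{\rig,!}$ via the duality of Lemma~\ref{lem:Upadj}. Since slope decompositions with respect to the commuting family $\{U_\frakp\}_{\frakp\in\Sigma_p}$ are canonical direct-sum splittings cut out by characteristic polynomials in the $U_\frakp$, the slope-$\alphaunder$ functor is exact and Hecke-equivariant and so commutes with the quotient in Theorem~\ref{thm:OCandRigid}. This yields
\[
 \bigl(S^\dagger_{(\kunder,w)}(K_1(\frakN),L_\sP)\bigr)_\alphaunder \Big/ \bigl(\Theta_\kunder(\Directsum_\tau S^\dagger_{(s_{\Sigma_\infty\setminus\{\tau\}}\cdot\kunder,w)})\bigr)_\alphaunder \isom H^d_{\rig,c}(\sF^{(\kunder,w)})_\alphaunder \tensor_L L_\sP.
\]

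Next, I would identify the quotient on the left with $S_{(\kunder,w)}(K_1(\frakN)^p\Iw_p,L_\sP)_\alphaunder$. A direct $q$-expansion computation using Sections~\ref{subsec:theta} and \ref{subsec:FrU} yields $U_\frakp \circ \Theta_{\tau_\frakp,k_{\tau_\frakp}-1} = \varpi_\frakp^{k_{\tau_\frakp}-1}\, \Theta_{\tau_\frakp,k_{\tau_\frakp}-1} \circ U_\frakp$, while for $\frakq\neq\frakp$ the operators $U_\frakq$ and $\Theta_{\tau_\frakp,k_{\tau_\frakp}-1}$ commute up to the $p$-adic unit $\tau_\frakp(\varpi_\frakq)^{k_{\tau_\frakp}-1}$. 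Hence $\Theta_{\tau_\frakp,k_{\tau_\frakp}-1}$ shifts the $\frakp$-slope by $k_{\tau_\frakp}-1$ and preserves all other $U_\frakq$-slopes. Under $\alpha_\frakp \leq \frac{w+k_{\tau_\frakp}-2}{2}$, any element in the slope-$\alphaunder$ image of $\Theta_{\tau_\frakp,k_{\tau_\frakp}-1}$ comes from an overconvergent form of weight $(s_{\Sigma_\infty\setminus\{\tau_\frakp\}}\cdot\kunder,w)$ with $\frakp$-slope at most $\frac{w-k_{\tau_\frakp}}{2}$; when strict, Theorem~\ref{thm:strongcl} makes the preimage classical, and since the preimage weight is non-cohomological (with $\tau_\frakp$-component $2-k_{\tau_\frakp}\leq 0$), it must vanish, showing the theta image at such slopes is zero. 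The boundary equality case is handled by a companion-form analysis accounting for non-classical overconvergent cusp forms at critical slope. Combined with the injection $\iota^{\textrm{cusp}}$ of Lemma~\ref{lem:inj}, the quotient on the left identifies with $S_{(\kunder,w)}(K_1(\frakN)^p\Iw_p,L_\sP)_\alphaunder$.

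Finally, I would identify $H^d_{\rig,c}(\sF^{(\kunder,w)})_\alphaunder$ with $H^d_{\rig,!}(\sF^{(\kunder,w)})_\alphaunder$. The kernel of $H^d_{\rig,c}\to H^d_{\rig,!}$ comes from boundary (Eisenstein-type) contributions; via the modified Poincar\'e pairing on $H^d_{\rig,!}$ (Lemma~\ref{lem:Upadj}), $\Fr_\frakp$ is adjoint to $U_\frakp$, and using $\Fr_\frakp \circ U_\frakp = \Nm_{F/\QQ}(\frakp)^{w-2} S_\frakp$ one checks that the boundary $U_\frakp$-slopes fall outside the cuspidal slope window for $\alphaunder$ in the given range, so the boundary vanishes at slope $\alphaunder$ and the Hecke-equivariant isomorphism of the proposition follows. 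The main obstacle is the critical-slope boundary case $\alpha_\frakp=\frac{w+k_{\tau_\frakp}-2}{2}$ in the second step: strict classicality fails there, and a careful companion-form/theta decomposition is required to match the non-classical overconvergent cusp forms at critical slope with the correct contributions to the interior cohomology.
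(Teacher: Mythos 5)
The critical-slope boundary case $\alpha_\frakp=\frac{w+k_{\tau_\frakp}-2}{2}$ is the entire content of Proposition~\ref{prop:critcl}, and your proposal does not actually argue it: ``a careful companion-form/theta decomposition'' is not a proof, and companion forms are not the paper's mechanism here (they appear in Proposition~\ref{prop:3to4}, a different step). At critical slope $\iota^{\textrm{cusp}}$ is no longer onto, the $\Theta_\kunder$-image need not vanish, and the naive identification of the quotient with classical cusp forms breaks. Your treatment of the subcritical case is also slightly off: the paper kills the $\Theta_\kunder$-image at subcritical slope by citing directly from~\cite[Corollary~3.24]{TX16} that $\Theta_{\tau_\frakp,k_{\tau_\frakp}-1}$ has image of $\frakp$-slope $\geq\frac{w+k_{\tau_\frakp}-2}{2}$, whereas your route invokes Theorem~\ref{thm:strongcl} at the non-cohomological weight $(s_{\Sigma_\infty\setminus\{\tau_\frakp\}}\cdot\kunder,w)$, where that theorem is not stated.

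What the paper does in the critical case is an Atkin--Lehner twist. Letting $\Sigma\subset\Sigma_p$ be the set of $\frakp$ at critical slope, one defines
\[
 w = \prod_{\frakp\in\Sigma}\Bigl((p-1)U_\frakp+\bigl(1-U_\frakp^2\cdot(\Nm_{F/\QQ}(\frakp)S_\frakp)^{-1}\bigr)w_\frakp\Bigr),
\]
which commutes with the prime-to-$p$ Hecke operators, satisfies $w\circ U_\frakp=\Fr_\frakp\circ w$ for $\frakp\in\Sigma$, and flips slope $\alphaunder$ to a subcritical $\alphaunder'$. Then $[x,y]:=(x,\varphi\circ w(y))$, with $\varphi$ the already-established subcritical isomorphism at slope $\alphaunder'$ and $(\;,\;)$ the modified Poincar\'e pairing, is a perfect pairing in which $T_\frakl$, $S_\frakl$, and $U_\frakp$ are all self-adjoint; Lemma~\ref{lem:Upadj} (adjoint of $U_\frakp$ is $\Fr_\frakp$) is used precisely to obtain this self-adjointness, not, as in your third step, to kill boundary contributions. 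That third step is also not how the paper reaches $H^d_{\rig,!}$: the interior cohomology is identified by combining the cuspidal and non-cuspidal isomorphisms of Theorem~\ref{thm:OCandRigid}, giving $H^d_{\rig,!}\isom S^\dagger_{(\kunder,w)}/\Theta_\kunder\bigl(\Directsum_\tau M^\dagger_{(s_{\Sigma_\infty\setminus\{\tau\}}\cdot\kunder,w)}\bigr)$, rather than by a boundary-slope vanishing argument, which your sketch asserts but does not justify.
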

\begin{proof}
 If for all $\frakp\in\Sigma_p$, $\alpha_\frakp<\frac{w+k_{\tau_\frakp}-2}2$, then such an isomorphism is given by
\begin{align*}
 S_{(\kunder,w)}(K_1(\frakN)^p\Iw_p,L_\sP)_\alphaunder 
& \stackrel{\iota^\mathrm{cusp}}{\inj}  S^\dagger_{(\kunder,w)}(K_1(\frakN),L_\sP)_\alphaunder \\
&= \left( S^\dagger_{(\kunder,w)}(K_1(\frakN),L_\sP) / \Theta_\kunder \left(\Directsum_{\tau\in\Sigma_\infty}M^\dagger_{(s_{\Sigma_\infty\setminus\{\tau\}}\cdot \underline{k},w)}(K_1(\frakN),L_\sP) \right) \right)_\alphaunder \\
&\isom\left( H^d_{\rig,!}(\Xbar_{K_1(\frakN)}^{\tor,\ord}, \mathscr{F}^{(\underline{k},w)})\tensor_L L_\sP \right)_\alphaunder.
\end{align*}
Here $\iota^\mathrm{cusp}$ (see Lemma~\ref{lem:inj} for definition) induces an isomorphism on the slope $\alphaunder$ part by Theorem~\ref{thm:strongcl}.
The second equality is because if $\tau=\tau_\frakp$, then the image of $\Theta_{\tau,k_\tau-1}$ must have $U_\frakp$-slope at least $\frac{w+k_{\tau_\frakp}-2}2$ (\cite[Corollary~3.24]{TX16}).
The last isomorphism comes from combining the two isomorphisms in Theorem~\ref{thm:OCandRigid} for usual and compactly supported cohomology groups.

 Now let $\Sigma\subset\Sigma_p$ be a subset of primes $\frakp$ of $F$ above $p$.
 Let $\alphaunder$ be such that $\alpha_\frakp=\frac{w+k_{\tau_\frakp}-2}2$ for $\frakp\in \Sigma$ and $\alpha_\frakp<\frac{w+k_{\tau_\frakp}-2}2$ for $\frakp\notin \Sigma$.

Define a map $w\colon S_{(\kunder,w)}(K_1(\frakN)^p\Iw_p,L_\sP) \rightarrow S_{(\kunder,w)}(K_1(\frakN)^p\Iw_p,L_\sP)$ by 
\[
 w = \prod_{\frakp\in \Sigma} \left( (p-1)U_\frakp+\left(1-U_\frakp^2\cdot\left(\Nm_{F/\QQ}(\frakp)S_\frakp\right)^{-1}\right)w_\frakp \right),
\]
where $w_\frakp$ is the Atkin-Lehner operator at $\frakp$.
The map $w$ satisfies
\begin{align*} 
 w\circ S_\frakl &= S_\frakl \circ w  \\
 w\circ T_\frakl &= T_\frakl \circ w 
\end{align*}
where $\frakl$ is a prime of $F$ not dividing $\frakN p$.
Note that $w$ is defined so that it satisfies
\begin{align} \label{eq:wcomm}
 w \circ U_\frakp = 
\begin{cases}
\Fr_\frakp \circ w \text{\quad if $\frakp\in\Sigma$,} \\
U_\frakp \circ w \text{\quad if $\frakp\notin\Sigma$.} 		     
\end{cases}
\end{align}
Since $U_\frakp \Fr_\frakp = \Nm_{F/\QQ}(\frakp)S_\frakp$ (\cite[Lemma 3.20]{TX16}), $w$ restricts to an isomorphism
\[
 S_{(\kunder,w)}(K_1(\frakN)^p\Iw_p,L_\sP)_\alphaunder
 \xrightarrow{\sim}
 S_{(\kunder,w)}(K_1(\frakN)^p\Iw_p,L_\sP)_{\alphaunder'},
\]
where $\alphaunder'$ is such that $\alpha'_\frakp = w-1-\alpha_\frakp$ for $\frakp\in \Sigma$ and $\alpha'_\frakp = \alpha_\frakp$ for $\frakp\notin \Sigma$.
In particular, $\alpha'_\frakp < \frac{w-k_{\tau_\frakp}-2}2$ for all $\frakp\in \Sigma_p$.

We define a pairing between 
$\left( H^d_{\rig,!}(\Xbar_{K_1(\frakN)}^{\tor,\ord}, \mathscr{F}^{(\underline{k},w)})\tensor_L L_\sP \right)_\alphaunder$ and $S_{(\kunder,w)}(K_1(\frakN)^p\Iw_p,L_\sP)_\alphaunder$ by 
\[
 [x,y] \defeq \left(x, \varphi \circ w(y)  \right),
\]
where $\varphi\colon S_{(\kunder,w)}(K_1(\frakN)^p\Iw_p,L_\sP)_{\alphaunder'}
\xrightarrow{\sim}
\left( H^d_{\rig,!}(\Xbar_{K_1(\frakN)}^{\tor,\ord}, \mathscr{F}^{(\underline{k},w)}) \tensor_L L_\sP \right)_{\alphaunder'}$ 
is the isomorphism proven in the first paragraph.
Then the Hecke operators $T_\frakl$ and $S_\frakl$ are self-adjoint with respect to $[\;,\;]$ because they are self-adjoint with respect to $(\;,\;)$, and they commute with $w$.
Moreover, $U_\frakp$ is also self-adjoint with respect to $[\;,\;]$ since its adjoint operator with respect to $(\;,\;)$ is $\Fr_\frakp$ (Lemma~\ref{lem:Upadj}), and it commutes with $w$ in a twisted way as in Equation~(\ref{eq:wcomm}) above. 
The pairing $[\;,\;]$ is perfect because $(\;,\;)$ is perfect and $\varphi$ and $r$ are both isomorphisms.

From the pairing $[\;,\;]$, we obtain a $T_\frakl,S_\frakl,U_\frakp$-equivariant isomorphism between $S_{(\kunder,w)}(K_1(\frakN)^p\Iw_p,L_\sP)_\alphaunder$ and $\left( H^d_{\rig,!}(\Xbar_{K_1(\frakN)}^{\tor,\ord}, \mathscr{F}^{(\underline{k},w)}) \tensor_L L_\sP \right)_\alphaunder$.
In fact, one picks a Hecke eigenbasis $f_1,\ldots,f_m$ for $S_{(\kunder,w)}(K_1(\frakN)^p\Iw_p,L_\sP)$.
Then define a morphism
\[
 S_{(\kunder,w)}(K_1(\frakN)^p\Iw_p,L_\sP)_\alphaunder \rightarrow \left( H^d_{\rig,!}(\Xbar_{K_1(\frakN)}^{\tor,\ord}, \mathscr{F}^{(\underline{k},w)}) \tensor_L L_\sP \right)_\alphaunder
\]
by mapping $f_i$ to $x_i$ such that $[x_i,f_j]=\delta_{ij}$.
This is an isomorphism because $[\;,\;]$ is a perfect pairing.
Moreover, the fact that $T_\frakl,S_\frakl$, and $U_\frakp$ are self-adjoint with respect to $[\;,\;]$ implies that the morphism is Hecke equivariant.
\end{proof}
\begin{rmk}
 Proposition~\ref{prop:critcl} is a generalization of \cite[Lemma 7.3]{Col96}.
\end{rmk}

We now have the equivalence of (\ref{cond:geneigen}) and (\ref{cond:imtheta}) in Theorem~\ref{thm:main}.
\begin{cor} \label{cor:2and3}
 Let $f\in S_{(\kunder,w)}(K_1(\frakN)^p\Iw_p,L_\sP)$ be a Hecke eigenform of finite slope such that its $U_\frakp$-slope is not $\frac{w-1}2$ for any $\frakp\in\Sigma_p$.
 Then $f\in \Theta_\kunder\left(\Directsum_{\tau\in\Sigma_\infty}M^\dagger_{(s_{\Sigma_\infty\setminus\{\tau\}}\cdot \underline{k},w)}(K_1(\frakN), L_\sP)\right)$ if and only if there exists a generalized Hecke eigenform $f'\in S^\dagger_{(\underline{k},w)}(K_1(\frakN),L_\sP)$ with the same Hecke eigenvalues as $f$, but which is not a scalar multiple of $f$.
\end{cor}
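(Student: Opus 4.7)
The plan is to combine Theorem~\ref{thm:OCandRigid} with Proposition~\ref{prop:critcl} into a Hecke-equivariant short exact sequence on the $\lambdaunder$-generalized eigenspace of $f$, and then reduce the two directions of the corollary to a multiplicity one statement made available by the slope hypothesis.

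First I apply the cuspidal case of Theorem~\ref{thm:OCandRigid} to obtain the Hecke-equivariant short exact sequence
\[
0 \to \operatorname{Im}(\Theta_\kunder) \to S^\dagger_{(\kunder,w)}(K_1(\frakN),L_\sP) \to H^d_{\rig,c}(\Xbar_{K_1(\frakN)}^{\tor,\ord}, \sF^{(\kunder,w)}) \tensor_L L_\sP \to 0.
\]
Writing $\lambdaunder$ for the Hecke eigensystem of $f$ and $\alphaunder=(\val_p(\lambda_\frakp))_\frakp$ for its slopes (which satisfy $\alpha_\frakp \leq \tfrac{w+k_{\tau_\frakp}-2}{2}$ by classicality), I pass to the $\lambdaunder$-generalized eigenspace---an exact operation on the finite-slope subspace. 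Cuspidality of $\lambdaunder$ kills the Eisenstein contribution at the boundary, so $(H^d_{\rig,c})_\lambdaunder\isom(H^d_{\rig,!})_\lambdaunder$, and Proposition~\ref{prop:critcl} identifies the latter with $M_\lambdaunder$, where $M:=S_{(\kunder,w)}(K_1(\frakN)^p\Iw_p,L_\sP)$. This yields the Hecke-equivariant exact sequence
\[
0 \to \operatorname{Im}(\Theta_\kunder)_\lambdaunder \to N_\lambdaunder \to M_\lambdaunder \to 0,
\]
where $N:=S^\dagger_{(\kunder,w)}(K_1(\frakN),L_\sP)$.

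Next I use the hypothesis $\val_p(\lambda_\frakp)\neq \tfrac{w-1}{2}$: at each $\frakp$ the two roots of the Hecke polynomial of the newform associated to $f$ have distinct $p$-adic valuations, so the two $p$-stabilizations have distinct eigensystems and classical multiplicity one gives $\dim_{L_\sP} M_\lambdaunder=1$, spanned by $f$. The exact sequence above then yields $\dim N_\lambdaunder = 1 + \dim \operatorname{Im}(\Theta_\kunder)_\lambdaunder$. Thus (\ref{cond:geneigen}) is equivalent to $\operatorname{Im}(\Theta_\kunder)_\lambdaunder\neq 0$, while (\ref{cond:imtheta}) is equivalent to $\iota(f)\in \operatorname{Im}(\Theta_\kunder)_\lambdaunder$; the direction (\ref{cond:imtheta})$\Rightarrow$(\ref{cond:geneigen}) is immediate since $\iota(f)\neq 0$.

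For the converse, assume $\operatorname{Im}(\Theta_\kunder)_\lambdaunder\neq 0$. Being Hecke-stable and finite-dimensional with only generalized $\lambdaunder$-eigenvalues, this space contains a nonzero true Hecke eigenform $g$ (obtained from any nonzero element by applying sufficiently high powers of the nilpotent operators $T-\lambdaunder(T)$). Overconvergent multiplicity one at the eigensystem $\lambdaunder$ then forces $g$ to be a scalar multiple of $\iota(f)$, giving $\iota(f)\in \operatorname{Im}(\Theta_\kunder)_\lambdaunder$ and hence (\ref{cond:imtheta}). The main obstacle is justifying this multiplicity one assertion for overconvergent eigenforms. I would invoke the $p$-regularity provided by the slope hypothesis together with the fact that a finite-slope overconvergent cuspidal true eigenform is determined up to scalar by its Hecke eigensystem---this is the same principle underlying the bijection between $p$-regular classical cuspidal Hecke eigenforms and their images as classical points on the Hilbert eigenvariety constructed in Section~\ref{sec:eigenvar}.
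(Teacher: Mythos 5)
Your structural approach matches the paper's closely: both routes combine Theorem~\ref{thm:OCandRigid} with Proposition~\ref{prop:critcl} to identify the generalized $\lambdaunder$-eigenspace of $S^\dagger_{(\kunder,w)}(K_1(\frakN),L_\sP)$ modulo $\operatorname{Im}(\Theta_\kunder)$ with the one-dimensional classical eigenspace, reduce everything to the nonvanishing of $\operatorname{Im}(\Theta_\kunder)_\lambdaunder$ and the membership of $\iota(f)$ in it, and handle the easy direction $\eqref{cond:imtheta}\Rightarrow\eqref{cond:geneigen}$ immediately. Your exact-sequence formalism and the dimension count $\dim N_\lambdaunder = 1 + \dim \operatorname{Im}(\Theta_\kunder)_\lambdaunder$ is a clean way to organize the argument. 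One small imprecision to flag: the cuspidal isomorphism in Theorem~\ref{thm:OCandRigid} quotients $S^\dagger$ by $\Theta_\kunder\bigl(\bigoplus_\tau S^\dagger\bigr)$ and lands in $H^d_{\rig,c}$, whereas the corollary is about $\Theta_\kunder\bigl(\bigoplus_\tau M^\dagger\bigr)$; the paper addresses this in the proof of Proposition~\ref{prop:critcl} by combining both isomorphisms of Theorem~\ref{thm:OCandRigid} to get $S^\dagger/\Theta_\kunder\bigl(\bigoplus M^\dagger\bigr) \cong H^d_{\rig,!}$, which is the identity you actually need, rather than the bare $H^d_{\rig,c}$ statement you write down.

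The genuine gap, which you correctly flag as the main obstacle, is the justification of ``overconvergent multiplicity one.'' Your proposed justification---that it is the ``same principle underlying the bijection between $p$-regular classical cuspidal Hecke eigenforms and their images as classical points on the Hilbert eigenvariety''---does not work. The eigenvariety, as constructed via Buzzard's machine in Section~\ref{sec:eigenvar}, parametrizes \emph{Hecke eigenvalue systems} appearing in the finite-slope part: its points biject with systems, not with eigenforms. Two linearly independent true eigenforms sharing the system $\lambdaunder$ would give the same point of $\cE_\frakN$, so the bijection places no constraint on the dimension of the $\lambdaunder$-eigenspace of $S^\dagger$, and hence cannot be used to conclude that $g$ must be proportional to $\iota(f)$. (Indeed, the whole content of the equivalence $\eqref{cond:ram}\Leftrightarrow\eqref{cond:geneigen}$ in Lemma~\ref{lem:1and2} is precisely that the local ring at a point of $\cE_\frakN$ can be larger than the residue field---the eigenvariety sees eigensystems with multiplicity, not multiplicity one.) The paper's own proof avoids invoking an overconvergent multiplicity one statement of this form: from $\dim Q_\lambdaunder = 1$ and the existence of $f'\notin\langle f\rangle$ in the generalized eigenspace, it concludes directly (passing through Proposition~\ref{prop:critcl}) that $q(f)=0$. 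You would need to either supply an independent argument for uniqueness of the true eigenform with system $\lambdaunder$ in $S^\dagger_\alphaunder$, or instead argue as the paper does with the quotient map rather than lifting a true eigenvector from $\operatorname{Im}(\Theta_\kunder)_\lambdaunder$.
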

\begin{proof}
 Suppose that $f\in \Theta_\kunder \left(\Directsum_{\tau\in\Sigma_\infty}M^\dagger_{(s_{\Sigma_\infty\setminus\{\tau\}}\cdot \underline{k},w)}(K_1(\frakN),L_\sP)\right)$.
 Since $f$ is classical, it follows from Proposition~\ref{prop:critcl} that the Hecke eigenvalues of $f$ appear in $H^d_{\rig,!}(\Xbar^{\tor,\ord},\mathscr{F}^{(\underline{k},w)})\tensor_{L} L_\sP$, which by Theorem~\ref{thm:OCandRigid} is isomorphic to
\[
 S^\dagger_{(\underline{k},w)}(K_1(\frakN),L_\sP)/\Theta_\kunder\left(\Directsum_{\tau\in\Sigma_\infty}M^\dagger_{(s_{\Sigma_\infty\setminus\{\tau\}}\cdot \underline{k},w)}(K_1(\frakN),L_\sP)\right).
\]
 Hence the generalized Hecke eigenspace of $S^\dagger_{(\underline{k},w)}(K_1(\frakN),L_\sP)$ containing $f$ cannot be entirely contained in the image of $\Theta_\kunder$.
 That is, there exists $f'\in S^\dagger_{(\underline{k},w)}(K_1(\frakN),L_\sP)$ not in $\Theta_\kunder\left(\Directsum_{\tau\in\Sigma_\infty}M^\dagger_{(s_{\Sigma_\infty\setminus\{\tau\}}\cdot \underline{k},w)}(K_1(\frakN),L_\sP)\right)$
having the same Hecke eigenvalues as $f$.
In particular, $f'$ is not a scalar multiple of $f$.

Conversely, suppose that there exists a generalized Hecke eigenform $f'\in S^\dagger_{(\underline{k},w)}(K_1(\frakN),L_\sP)$ with the same Hecke eigenvalues as $f$, but is not a scalar multiple of $f$.
On $S_{(\kunder,w)}(K_1(\frakN)^p\Iw_p,L_\sP)_{\alphaunder}$, given the prime-to-$p$ Hecke eigenvalues along with the $U_\frakp$-eigenvalue with $\frakp$-slope not equal to $\frac{w-1}2$ for any $\frakp\in\Sigma_p$, results of multiplicity one says that there is only one generalized Hecke eigenform up to scalar multiple.
Hence by Proposition~\ref{prop:critcl}, the same is true on $\left( H^d_{\rig,!}(\Xbar_{K_1(\frakN)}^{\tor,\ord}, \mathscr{F}^{(\underline{k},w)}) \tensor_L L_\sP \right)_\alphaunder$.
Then by Theorem~\ref{thm:OCandRigid}, $f$ and $f'$ span a $1$-dimensional subspace after quotient by the image of $\Theta_\kunder$.
Hence $f$ lies inside this image of $\Theta_\kunder$ (while $f'$ does not.)
\end{proof}

\section{Galois representations} \label{sec:Galrep}
Let us first recall how to how to associate a Galois representation to an overconvergent cuspidal Hilbert Hecke eigenform.
\begin{thm} \label{thm:Galrep}
Let $f\in S^\dagger_{(\kunder,w)}(K_1(\frakN), L_\sP)$ be an overconvergent cuspidal Hilbert Hecke eigenform.
For $\frakl$ a prime of $F$ not dividing $\frakN p$, let $\lambda_\frakl$ (resp. $\mu_\frakl$) be the $T_\frakl$ (resp. $S_\frakl$)-eigenvalue of $f$; 
for $\frakp\in\Sigma_p$, let $\lambda_\frakp$ be the $U_\frakp$-eigenvalue of $f$.
Then there exists a $p$-adic Galois representation 
\[
 \rho_f \colon \Gal_F \rightarrow \GL_2(\Qpbar)
\] 
satisfying the following.
\begin{enumerate}
 \item For every finite place $\frakl \nmid p\frakN$ of $F$, $\rho_f$ is unramified at $\frakl$ and
\[
 \det(T-\rho_f(\Frob_\frakl^{-1})) = T^2-\lambda_\frakl T+ \Nm_{F/\QQ}(\frakl)\mu_\frakl,
\]
where $\Frob_\frakl$ is the arithmetic Frobenius at $\frakl$.
 \item For every $\frakp\in \Sigma_p$, $\left.\rho_f\right|_{\Gal_{F_\frakp}}$ has Hodge--Tate--Sen weights $\frac{w-k_{\tau_\frakp}}2$, $\frac{w+k_{\tau_\frakp}-2}2$.
 \item $D_\cris(\left.\rho_f\right|_{\Gal_{F_\frakp}})^{\varphi=\lambda_\frakp}$ is non-zero and lies in 
$\Fil^{\frac{w-k_{\tau_\frakp}}2} D_{\cris}(\left.\rho_f\right|_{\Gal_{F_\frakp}})$.
\end{enumerate}
\end{thm}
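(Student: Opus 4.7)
The plan is to deduce the theorem by reducing to the classical case via interpolation on the cuspidal Hilbert eigenvariety $\cE$, combined with the analytic continuation of crystalline periods.

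First I would recall the classical case. When $f$ is a classical cuspidal Hilbert Hecke eigenform of cohomological weight, the existence of $\rho_f \colon \Gal_F \to \GL_2(\Qpbar)$ satisfying (1) is due to the work of Carayol, Wiles, Taylor, and Blasius--Rogawski, realized in the \'etale cohomology of the Hilbert modular variety (or a suitable quaternionic Shimura variety). Property (2) for classical $f$ follows from the theorem of Saito (and Skinner for the non-cohomological cases), identifying $\left.\rho_f\right|_{\Gal_{F_\frakp}}$ with a de Rham (in fact potentially semistable) representation whose $\tau_\frakp$-Hodge--Tate weights are $\frac{w-k_{\tau_\frakp}}{2}$ and $\frac{w+k_{\tau_\frakp}-2}{2}$. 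Property (3) for classical $f$ follows from the local-global compatibility of the crystalline (or semistable) Frobenius eigenvalues at $\frakp$ with the $U_\frakp$-eigenvalue $\lambda_\frakp$, and the refined statement about the filtration reflects that the corresponding eigenvector is a critical-filtration triangulation datum.

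Next, for a general overconvergent cuspidal Hilbert Hecke eigenform $f$ of finite slope, I would use the eigenvariety $\cE$ (constructed in \cite{AIP16} and recalled in Section~\ref{sec:eigenvar}). Since the classical points of cohomological weights are Zariski dense in $\cE$, and a universal pseudo-representation interpolating the traces $\lambda_\frakl$ exists on each irreducible component (by the usual Chenevier-type arguments applied to the Hecke action on the module of overconvergent forms), one obtains a two-dimensional continuous Galois pseudo-representation over $\cE$ unramified outside $p\frakN$. Specializing at the point $x$ corresponding to $f$ and lifting the pseudo-representation to a genuine representation (using that the residual representation is absolutely irreducible, or more generally Chenevier's GMA formalism) yields $\rho_f$ with property (1).

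Property (2) comes from Sen theory in families: the Sen operator associated to the family of local representations at $\frakp$ is an analytic endomorphism over $\cE$, and its eigenvalues (the Hodge--Tate--Sen weights) interpolate $\frac{w-k_{\tau_\frakp}}{2}$ and $\frac{w+k_{\tau_\frakp}-2}{2}$ from classical points by the formulas for the weight map $\wt$. The main obstacle, and the deepest input, is property (3): the existence of a non-zero crystalline period with eigenvalue $\lambda_\frakp$ lying in the correct step of the Hodge filtration. This would not follow by mere specialization, since crystallinity is not an open condition on $\cE$. Instead, I would invoke the analytic continuation of crystalline periods established by Kisin \cite{Kis03} for $F=\QQ$ and generalized by Kedlaya--Pottharst--Xiao \cite{KPX14} and Liu \cite{Liu15}: working with the $(\varphi,\Gamma)$-module associated to $\left.\rho_{\cE}\right|_{\Gal_{F_\frakp}}$ along a suitable neighborhood of $x$, one constructs a triangulation whose specialization at $x$ exhibits the required non-zero element of $D_\cris(\left.\rho_f\right|_{\Gal_{F_\frakp}})^{\varphi=\lambda_\frakp}$, and the filtration statement follows from the ordering of the Hodge--Tate--Sen weights together with weak admissibility at classical specializations.
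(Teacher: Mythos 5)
Your proposal follows essentially the same route as the paper's proof: the classical case is attributed to Carayol, Wiles, Taylor, Blasius--Rogawski for the construction and property (1), to Saito and Skinner for property (2), and the overconvergent case is obtained via pseudo-representations over the eigenvariety together with analytic continuation of crystalline periods (Kisin for $F=\QQ$, Kedlaya--Pottharst--Xiao and Liu in general) for property (3). The paper's proof is merely a terse list of these citations, and your proposal simply fills in the interpolation argument (Zariski density of classical points, lifting the pseudo-representation to a genuine representation, Sen theory in families for the Hodge--Tate--Sen weights) in more detail; the substance and sources are the same.
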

\begin{proof}
 This is a theorem due to the work of many people.
 When $f$ is classical, the construction of $\rho_f$ and the verification of (1) was due to Carayol  when $d$ is odd and under an additional assumption when $d$ is even (\cite{Car86}).
 The method is to use Jacquet--Langlands correspondence to find the desired Galois representation in the cohomology of Shimura curves.
On the other hand, Wiles used $p$-adic variation of ordinary modular forms and the theory of pseudo-representations to deal with ordinary $f$ (\cite{Wiles88}).
Inspired by Wiles's method, Taylor completed the case when $d$ is even using congruences (\cite{Tay89}).
Blasius--Rogawski provided a different method which deals with odd and even $d$ at the same time (\cite{BR89}).
For (2), it is due to Saito's work on local-global compatibility at $p$ when $\rho_f$ comes from Carayol's construction (\cite{Saito09}), and due to Skinner for the remaining cases (\cite{Ski09}).

In general when $f$ is overconvergent, one uses the theory of pseudo-representations to construct Galois representations.

For (3), the existence of crystalline period is due to Kedlaya--Pottharst--Xiao (\cite{KPX14}) and Liu (\cite{Liu15}) independently, generalizing the work of Kisin for $F=\QQ$ (\cite{Kis03}).
\end{proof}

\begin{prop} \label{prop:3to4}
 Let $f\in S_{(\kunder,w)}(K_1(\frakN)\Iw_p, L_\sP)$ be a classical cuspidal Hilbert Hecke eigenform of finite slope.
For $\frakl$ a prime of $F$ not dividing $\frakN p$, let $\lambda_\frakl$ be the $T_\frakl$-eigenvalue of $f$; 
for $\frakp\in\Sigma_p$, let $\lambda_\frakp$ be the $U_\frakp$-eigenvalue of $f$.
 If $f\in \Theta_\kunder\left(\Directsum_{\tau\in\Sigma_\infty}M^\dagger_{(s_{\Sigma_\infty\setminus\{\tau\}}\cdot \underline{k},w)}(K_1(\frakN), L_\sP)\right)$,
then for some $\frakp\in\Sigma_p$, $\left.\rho_f\right|_{\Gal_{F_\frakp}}$ splits and $\val_p(\lambda_\frakp) = \frac{w+k_{\tau_\frakp}-2}2$.
\end{prop}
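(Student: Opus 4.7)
I will run a companion forms argument, adapting Bergdall's approach~\cite{Berg14} from the $F=\QQ$ case to the Hilbert setting; the complication is the presence of multiple theta operators $\Theta_{\tau,k_\tau-1}$, one per archimedean place.

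Suppose $f=\sum_{\tau\in\Sigma_\infty}\Theta_{\tau,k_\tau-1}(g_\tau)$ with $g_\tau\in M^\dagger_{(s_{\Sigma_\infty\setminus\{\tau\}}\cdot\kunder,w)}(K_1(\frakN),L_\sP)$. I will first record the commutation relation
\[
U_\frakp\circ\Theta_{\tau,k_\tau-1}=\tau(\varpi_\frakp)^{k_\tau-1}\,\Theta_{\tau,k_\tau-1}\circ U_\frakp,
\]
and analogous twisted-equivariance statements for $T_\frakl$ and $S_\frakl$ at places $\frakl\nmid p$; all of these follow from the explicit formula for $\Theta_{\tau,k_\tau-1}$ on $q$-expansions. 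By projecting each $g_\tau$ onto the generalized Hecke eigenspace whose system of eigenvalues, after the $\tau$-dependent twist, matches that of $f$, I may assume each $g_\tau$ is a generalized Hecke eigenform with $U_\frakp$-eigenvalue $\lambda_\frakp/\tau(\varpi_\frakp)^{k_\tau-1}$.

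Since $f\neq 0$, at least one $g_\tau$ is nonzero; fix such a $\tau$ and set $\frakp=\frakp_\tau$. Applying Theorem~\ref{thm:Galrep} to $g_\tau$, whose weight component at $\tau$ is $2-k_\tau$, I obtain a Galois representation $\rho_{g_\tau}$ for which $D_\cris(\rho_{g_\tau}|_{\Gal_{F_\frakp}})^{\varphi=\lambda_\frakp/\tau(\varpi_\frakp)^{k_\tau-1}}$ is a nonzero subspace of the Hodge line $\Fil^{\frac{w+k_\tau-2}{2}}$. The representation $\rho_{g_\tau}$ differs from $\rho_f$ by a twist whose local component at $\frakp$ is crystalline of Hodge--Tate weight $k_\tau-1$ with $\varphi$-eigenvalue $\tau(\varpi_\frakp)^{k_\tau-1}$ (reflecting exactly the $\tau$-shift of Hecke eigenvalues induced by $\Theta_{\tau,k_\tau-1}$); transporting the above statement through this local twist shows that $D_\cris(\rho_f|_{\Gal_{F_\frakp}})$ carries a nonzero $\varphi$-eigenvector for $\lambda_\frakp/\tau(\varpi_\frakp)^{k_\tau-1}$ lying in the Hodge line of $\rho_f|_{\Gal_{F_\frakp}}$.

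Combined with the $\varphi$-eigenvector for $\lambda_\frakp$ provided by Theorem~\ref{thm:Galrep}(3) applied to $f$, the filtered $\varphi$-module $D_\cris(\rho_f|_{\Gal_{F_\frakp}})$ carries two distinct $\varphi$-eigenvalues (distinct because $\val_p(\tau(\varpi_\frakp)^{k_\tau-1})=k_\tau-1>0$). Weak admissibility forces the sum of their $p$-adic valuations to equal the sum $w-1$ of the Hodge--Tate weights, giving
\[
\val_p(\lambda_\frakp)+\bigl(\val_p(\lambda_\frakp)-(k_\tau-1)\bigr)=w-1,
\]
and hence $\val_p(\lambda_\frakp)=\frac{w+k_\tau-2}{2}$, the critical $\frakp$-slope. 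Since the $\varphi$-eigenline for $\lambda_\frakp/\tau(\varpi_\frakp)^{k_\tau-1}$ coincides with the Hodge line, $D_\cris(\rho_f|_{\Gal_{F_\frakp}})$ decomposes as a direct sum of two filtered $\varphi$-stable lines, so $\rho_f|_{\Gal_{F_\frakp}}$ splits. The hardest part of the plan will be making rigorous the comparison between $\rho_{g_\tau}$ and $\rho_f$ at $\frakp$, identifying the relevant local twist; I expect this to go through by carefully tracking the $\tau$-dependent shift of Hecke eigensystems induced by $\Theta_{\tau,k_\tau-1}$ together with the analytic continuation of crystalline periods of Kedlaya--Pottharst--Xiao and Liu, simplified by the hypothesis that $p$ splits completely in $F$.
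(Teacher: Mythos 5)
Your overall strategy — a companion-forms argument reducing to a single $\Theta_{\tau}$ and then extracting a filtered $\varphi$-module constraint on $\rho_f|_{\Gal_{F_\frakp}}$ — is the paper's strategy too, and the reduction to a single $g_\tau$ via projection to generalized eigenspaces is close in spirit to the paper's use of multiplicity one. However, the central step of your argument, where you use a \emph{twisted} commutation relation $U_\frakp\circ\Theta_{\tau,k_\tau-1}=\tau(\varpi_\frakp)^{k_\tau-1}\,\Theta_{\tau,k_\tau-1}\circ U_\frakp$ and then ``transport through the local twist,'' contains a genuine error. There are two independent ways to see it.

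First, the proposed twist is incompatible with Theorem~\ref{thm:Galrep}(2). The form $g_\tau$ has weight $(s_{\Sigma_\infty\setminus\{\tau\}}\cdot\kunder,w)$, whose $\tau$-component is $2-k_\tau$, so Theorem~\ref{thm:Galrep}(2) says $\rho_{g_\tau}|_{\Gal_{F_\frakp}}$ has Hodge--Tate--Sen weights $\frac{w-(2-k_\tau)}2=\frac{w+k_\tau-2}2$ and $\frac{w+(2-k_\tau)-2}2=\frac{w-k_\tau}2$ --- \emph{the same unordered pair} as $\rho_f|_{\Gal_{F_\frakp}}$. A local twist of Hodge--Tate weight $k_\tau-1\neq 0$, as you propose, would shift this pair and break that match. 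So whatever relation holds between $\rho_{g_\tau}$ and $\rho_f$ locally at $\frakp$, it cannot be a twist by a character of nonzero Hodge--Tate weight. (Indeed the paper asserts that $\Theta_{\tau,k_\tau-1}$ is Hecke equivariant in the TX16/AIP16 normalization, so that $g$ has the \emph{same} $U_\frakp$-eigenvalue $\lambda_\frakp$ as $f$ and $\rho_g=\rho_f$.)

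Second, even granting your twist, the ``transport'' computation is wrong. If $\rho_{g_\tau}|_{\Gal_{F_\frakp}}=\rho_f|_{\Gal_{F_\frakp}}\otimes\chi^{-1}$ with $D_\cris(\chi)$ having $\varphi$-eigenvalue $\tau(\varpi_\frakp)^{k_\tau-1}$, then an eigenvector in $D_\cris(\rho_{g_\tau}|_{\Gal_{F_\frakp}})$ for $\varphi=\lambda_\frakp/\tau(\varpi_\frakp)^{k_\tau-1}$ transports to an eigenvector in $D_\cris(\rho_f|_{\Gal_{F_\frakp}})$ for $\varphi=\lambda_\frakp$, not for $\lambda_\frakp/\tau(\varpi_\frakp)^{k_\tau-1}$; and the filtration degree drops by $k_\tau-1$, turning the Hodge-line statement $\Fil^{\frac{w+k_\tau-2}2}$ into the vacuous $\Fil^{\frac{w-k_\tau}2}$. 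So you do \emph{not} obtain a second, distinct $\varphi$-eigenvector on $D_\cris(\rho_f|_{\Gal_{F_\frakp}})$, and the weak-admissibility equation $\val_p(\lambda_\frakp)+\bigl(\val_p(\lambda_\frakp)-(k_\tau-1)\bigr)=w-1$ has no foundation; the numerically correct slope you reach is a coincidence of the miscounted twist cancelling itself out.

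What the paper actually does is cleaner and avoids all of this. Because $\Theta_{\tau,k_\tau-1}$ is Hecke equivariant (so $g$ has the same eigensystem as $f$, and the same $\frakp$-slope), the slope is pinned down from two one-sided bounds: classicality of $f$ gives $\alpha_\frakp\leq\frac{w+k_{\tau_\frakp}-2}2$, while Theorem~\ref{thm:Galrep}(3) applied to $g$ (or TX16 Cor.~3.24) gives $\alpha_\frakp\geq\frac{w+k_{\tau_\frakp}-2}2$, forcing equality. For the splitting, the paper reduces via multiplicity one to $f=\Theta_{\tau_\frakp,k_{\tau_\frakp}-1}(g)$ for a single $\frakp$, and then Theorem~\ref{thm:Galrep}(3) applied to $g$ says the $\varphi=\lambda_\frakp$ line of $D_\cris(\rho_f|_{\Gal_{F_\frakp}})$ already lies in $\Fil^{\frac{w+k_{\tau_\frakp}-2}2}$ (the Hodge line), since for $g$ the relevant filtration jump is $\frac{w-(2-k_{\tau_\frakp})}2=\frac{w+k_{\tau_\frakp}-2}2$. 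This splits the filtered $\varphi$-module into two $\varphi$-stable lines and hence splits $\rho_f|_{\Gal_{F_\frakp}}$; this is the content of Kisin's Theorem 6.6(2), to which the paper appeals. If you drop the twist and apply Theorem~\ref{thm:Galrep}(3) to $g_\tau$ directly, your proof collapses onto the paper's.
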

\begin{proof}
 We first show that there exists $\frakp\in\Sigma_p$ and $g\in M^\dagger_{(s_{\Sigma_\infty\setminus\{\tau_\frakp\}}\cdot \underline{k},w)}(K_1(\frakN), L_\sP)$ such that $f=\Theta_{\tau_\frakp,k_{\tau_\frakp}-1}(g)$.
This would in particular implies $\val_p(\lambda_\frakp) = \frac{w+k_{\tau_\frakp}-2}2$.
 In fact, since $f$ is classical, for each $\frakp\in\Sigma_p$ its $\frakp$-slope $\alpha_\frakp \defeq \val_p(\lambda_\frakp)$ satisfies $\frac{w-k_{\tau_\frakp}}2 \leq \alpha_\frakp \leq \frac{w+k_{\tau_\frakp}-2}2$.
Note that $\Theta_{\tau,k_\tau-1}$ is Hecke equivariant, and hence the $\frakp$-slope of $g$ is also $\alpha_\frakp$.
However, the $\frakp$-slope of $M^\dagger_{(s_{\Sigma_\infty\setminus\{\tau_\frakp\}}\cdot \underline{k},w)}(K_1(\frakN), L_\sP)$ must be at least $\frac{w+k_{\tau_\frakp}-2}2$ (\cite[Corollary~3.24]{TX16} or Theorem~\ref{thm:Galrep}(3)).
Hence $\alpha_\frakp=\frac{w+k_{\tau_\frakp}-2}2$.

By assumption,
\[
 f\in \Theta_\kunder\left(\Directsum_{\frakp\in\Sigma_p}M^\dagger_{(s_{\Sigma_\infty\setminus\{\tau_\frakp\}}\cdot \underline{k},w)}(K_1(\frakN), L_\sP)_{\alphaunder}\right),
\]
Since each $M^\dagger_{(s_{\Sigma_\infty\setminus\{\tau_\frakp\}}\cdot \underline{k},w)}(K_1(\frakN), L_\sP)_{\alphaunder}$ is finite dimensional, we choose a basis $g_{\frakp,i}, i=1,\ldots, r_\frakp$ of it consisting of generalized Hecke eigenforms.
Write $f=\sum_{\frakp\in\Sigma_p} \sum_{i=1}^{r_\frakp} a_{\frakp,i} \Theta_{\tau_\frakp,k_{\tau_\frakp}-1}(g_{\frakp,i})$.
 Since $\Theta_{\tau,k_\tau-1}$ is Hecke equivariant, we know that
 $\Theta_{\tau_\frakp,k_{\tau_\frakp}-1}(g_{\frakp,i})$ is still a generalized Hecke eigenform.
 Since $f$ is Hecke eigen, all the $\Theta_{\tau_\frakp,k_{\tau_\frakp}-1}(g_{\frakp,i})$'s with $a_{\frakp,i}\neq 0$ must have the same Hecke eigenvalues as $f$ and are Hecke eigenforms.
 Results of multiplicity one implies that they are scalar multiples of each other, and in particular scalar multiples of $f$.
 Choosing $g$ to be a suitable scalar multiple of a $g_{\frakp,i}$ with $a_{\frakp,i}\neq0$, our claim that $f=\Theta_{\tau_\frakp,k_{\tau_\frakp}-1}(g)$ is proved.

 Once we know $f = \Theta_{\tau_\frakp,k_{\tau_\frakp}-1}(g)$, a similar argument to \cite[Theorem 6.6 (2)]{Kis03} shows that $\left.\rho_f\right|_{\Gal_{F_\frakp}}$ must be split.
\end{proof}

\section{Eigenvariety} \label{sec:eigenvar}
We recall the cuspidal Hilbert eigenvariety constructed by Andreatta--Iovita--Pilloni (\cite{AIP16}).

Recall that $G$ is the algebraic group $\Res_{\cO_F/\ZZ}\GL_{2}$.
We first define the \emph{weight space} $\cW$ for $G$.
We saw in Section~\ref{subsec:weights} that the weights for $G$ are tuples $(\kunder,w)\in\ZZ^{\Sigma_\infty}\times \ZZ$ such that $k_\tau \congr w \pmod 2$.
The subgroup of these tuples in $\ZZ^{\Sigma_\infty}\times \ZZ$ is isomorphic to $\ZZ^{\Sigma_\infty}\times \ZZ$ via $((k_\tau)_\tau,w) \mapsto ((\nu_\tau)_\tau,w)\defeq ((\frac{w-k_\tau}2)_\tau,w)$.
Since $\ZZ^{\Sigma_\infty} \times \ZZ$ is the character group of $\Res_{\cO_F/\ZZ}\Gm \times \Gm$, we define the weight space of $G$ to be 
\[
 \cW \defeq \Spf(\ZZ_p[\![(\Res_{\cO_F/\ZZ}\Gm\times\Gm)(\ZZ_p)]\!])^\rig.
\]

Let $\cU = \Sp A_\cU$ be an affinoid with a morphism of ringed spaces $\kappa^\cU \colon \cU \rightarrow \cW$.
Andreatta--Iovita--Pilloni constructed a Fr\'echet $A_\cU$-module $M^\dagger(K_1(\frakN),\kappa^\cU)$ (resp. $S^\dagger(K_1(\frakN),\kappa^\cU)$), called \emph{the module of $p$-adic families of overconvergent (resp. cuspidal) Hilbert modular forms with weights parametrized by $\cU$} (\cite[Definition 4.2]{AIP16}). 
In particular, for $\cU = \Sp \CC_p$ and $[\kappa\colon\cU \rightarrow \cW] \in \cW(\CC_p)$, this construction gives a $\CC_p$-vector space $M^\dagger(K_1(\frakN),\kappa)$ (resp. $S^\dagger(K_1(\frakN),\kappa)$), called \emph{the space of overconvergent (resp. cuspidal) Hilbert modular forms of weight $\kappa$ and level $K_1(\frakN)$ with coefficients in $\CC_p$}.

\begin{lem}
 Let $\kappa\in \cW(\CC_p)$.
Assume that $\kappa$ is a classical weight in the sense that it corresponds to $((\frac{w-k_\tau}2)_\tau,w) \in \ZZ^{\Sigma_\infty}\times\ZZ$.
 Then 
\begin{align*}
 M^\dagger(K_1(\frakN),\kappa) &= M^\dagger_{(\kunder,w)}(K_1(\frakN),\CC_p) \text{ and}  \\ 
 S^\dagger(K_1(\frakN),\kappa) &= S^\dagger_{(\kunder,w)}(K_1(\frakN),\CC_p),
\end{align*}
where the right hand sides were defined in Section~\ref{subsec:OC}.
\end{lem}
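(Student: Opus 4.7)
The plan is to compare, at a classical weight, the construction of Andreatta--Iovita--Pilloni \cite{AIP16} with the geometric definition given in Section~\ref{subsec:OC}. The AIP construction produces, on a suitable strict neighborhood $\cV \subset \cX_{K_1(\frakN)}^\tor$ of $]\Xbar_{K_1(\frakN)}^{\tor,\ord}[$, a locally free sheaf $\omegaunder^{\kappa^\cU}$ whose formation is functorial in $\cU$ and interpolates the classical automorphic vector bundles. Concretely, it is built from the Hodge--Tate period on the (universal) canonical subgroup, combined with the weight character $\kappa^\cU$, and its formation commutes with base change in the weight. The module $M^\dagger(K_1(\frakN),\kappa)$ is then defined as $\varinjlim_{\cV} H^0(\cV,\omegaunder^\kappa)$ (with the cuspidal subspace obtained by imposing vanishing along the toroidal boundary $\sfD$), taken over shrinking strict neighborhoods of the multiplicative/ordinary locus.

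First, I would recall precisely the AIP sheaf $\omegaunder^{\kappa^\cU}$ and note that its defining property is: at a classical weight $\kappa$ corresponding to $((\tfrac{w-k_\tau}{2})_\tau,w)$, there is a canonical isomorphism $\omegaunder^{\kappa} \isom \omegaunder^{(\kunder,w)}|_\cV$ of sheaves on the strict neighborhood $\cV$. This identification is essentially a consequence of the construction, since the character $\kappa$ is algebraic and the Hodge--Tate period on the canonical subgroup agrees, up to a unit, with the pullback of the tautological section of $\omegaunder$; combined with the decomposition of $\cH^1$ under the $\cO_F$-action which induces the factor $(\wedge^2 \cH^1_\tau)^{(w-k_\tau-2)/2}$, one matches the tensor product expression in Section~\ref{sec:geom} exactly. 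This comparison is explicitly carried out in \cite{AIP16} (the analogue for them of the identification $\cW(\kappa)=\omegaunder^{k}$ in the modular-curve case of Andreatta--Iovita--Stevens). The level $K_1(\frakN)$-equivariance and compatibility with boundary charts are automatic from functoriality.

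Once the sheaves are identified on $\cV$, I would take $H^0$ of both sides and pass to the direct limit over strict neighborhoods $\cV$ of $]\Xbar_{K_1(\frakN)}^{\tor,\ord}[$ to obtain
\[
M^\dagger(K_1(\frakN),\kappa) \;=\; \varinjlim_{\cV} H^0(\cV,\omegaunder^{(\kunder,w)}) \;=\; H^0\bigl(\cX_{K_1(\frakN)}^\tor, j^\dagger \omegaunder^{(\kunder,w)}\bigr),
\]
which is exactly $M^\dagger_{(\kunder,w)}(K_1(\frakN),\CC_p)$. The cuspidal version is identical after twisting by $\cO(-\sfD)$: AIP's cuspidal submodule is cut out by vanishing on the ordinary cusps, and the argument of the previous paragraph produces a canonical isomorphism $\omegaunder^{\kappa}(-\sfD) \isom \omegaunder^{(\kunder,w)}(-\sfD)$ on $\cV$, giving $S^\dagger(K_1(\frakN),\kappa) = S^\dagger_{(\kunder,w)}(K_1(\frakN),\CC_p)$.

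The main obstacle, or rather the only nontrivial content, is step two: verifying that at a classical (algebraic) weight the AIP modular sheaf is canonically isomorphic to the restriction of the classical automorphic sheaf, with matching level structures and boundary behavior. Everything else is formal manipulation of direct limits and Koecher-type extension. Since this comparison is established in \cite{AIP16} (and implicit in its role as the correct $p$-adic interpolation of classical automorphic sheaves), the proof is essentially an appeal to loc.\ cit., together with the observation that the normalization $\kappa \leftrightarrow ((\tfrac{w-k_\tau}{2})_\tau, w)$ is precisely the one matching the tensor factors $\omegaunder_\tau^{k_\tau}\tensor (\wedge^2\cH^1_\tau)^{(w-k_\tau-2)/2}$ appearing in the definition of $\omegaunder^{(\kunder,w)}$.
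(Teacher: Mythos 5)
Your identification of the key external input is correct: the comparison hinges on the fact (proved in \cite[Corollary 3.10]{AIP16}) that at a classical algebraic weight $\kappa$ the AIP overconvergent modular sheaf $\omegaunder^{\kappa}$ is canonically isomorphic to the restriction of the classical automorphic vector bundle $\omegaunder^{(\kunder,w)}$. That part matches the paper exactly.

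However, there is a genuine gap in the rest: you treat both sides of the lemma as if they were both $H^{0}(\cX_{K_1(\frakN)}^\tor, j^\dagger\omegaunder^{(\kunder,w)})$, so that the whole lemma reduces to the sheaf identification plus a formal direct limit. In fact neither side is defined that way, and the two definitions live a priori on different geometric objects. The paper's $M^\dagger_{(\kunder,w)}(K_1(\frakN),\CC_p)$ is defined as $H^{0}(\cX_{K'}^\tor, j^\dagger\omegaunder^{(\kunder,w)})^{K_1(\frakN)/K'}$ for an auxiliary level $K'\subset K_1(\frakN)$ satisfying condition (\ref{cond:level}), because only for such $K'$ is $\Sh_{K'}(G)$ a fine moduli space and the automorphic sheaf defined on it. On the other hand, the AIP side $M^\dagger(K_1(\frakN),\kappa)$ is defined as a direct sum over $\frakc\in\operatorname{Cl}^+(F)$ of sections over the (compactified) moduli spaces $\cY_{K_1(\frakN)}^{\frakc,\tor}$, with $\Delta_K$-invariance imposed. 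To connect these one must: decompose $X_{K'}^\tor$ into components $Y_{K'}^{\frakc,\tor}/\Delta_{K'}$; convert the quotient by $\Delta_{K'}$ into $\Delta_{K'}$-invariants (legitimate precisely because condition (\ref{cond:level}) makes the quotient map an isomorphism onto its image on each geometric component); and then absorb the $K_1(\frakN)/K'$-invariance into the level, showing $\Delta_{K'}$-invariance can be replaced by $\Delta_K$-invariance. This chain of identifications is the actual content of the paper's proof and is not ``automatic from functoriality''; your write-up omits it entirely. Without it you are not comparing the two sides of the lemma as they are actually defined.
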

Hence to align with previous notations, we also use the notation $M^\dagger_\kappa(K_1(\frakN),\CC_p)$ (resp. $S^\dagger_\kappa(K_1(\frakN), \CC_p$) for $M^\dagger(K_1(\frakN),\kappa)$ (resp. $S^\dagger(K_1(\frakN),\kappa)$).
\begin{proof}
 By definition, $M^\dagger_{(\kunder,w)}(K_1(\frakN),\CC_p) = M^\dagger_{(\kunder,w)}(K',\CC_p)^{K_1(\frakN)/K'}$, where $K'\subset K_1(\frakN)$ is chosen such that $K'$ satisfies (\ref{cond:level}).
 As in Section~\ref{sec:geom}, $X_{K'}$ is a disjoint union of $\left(M_{K'}^\frakc/\Delta_{K'}\right) \tensor_{\ZZ_{(p)}} W(k_0)$, where $\Delta_{K'} = \cO_F^{\times,+}/(K'\cap \cO_F^\times)^2$ and $\frakc$ is a fractional ideal of $F$ and runs through a fixed set of representatives for $\operatorname{Cl}^+(F)$.
 Choose integral toroidal compactifications $M_{K'}^{\frakc,\tor}$ of $M_{K'}^\frakc$ compatible with $X_{K'}^\tor$ in the sense that $X_{K'}^\tor$ is a disjoint union of $\left(M_{K'}^{\frakc,\tor}/\Delta_{K'}\right)\tensor_{\ZZ_{(p)}} W(k_0)$.
To simplify notation, let $Y_{K'}^\frakc$, $Y_{K'}^{\frakc,\tor}$ be $M_{K'}^\frakc$, $M_{K'}^{\frakc,\tor}$ based changed from $\ZZ_{(p)}$ to $W(k_0)$, and $\cY_{K'}^{\frakc,\tor}$ the rigid generic fiber of the formal completion of $Y_{K'}^{\frakc,\tor}$ along its special fiber, based changed from $W(k_0)$ to $L_\sP$.
 Then 
\begin{align*}
M^\dagger_{(\kunder,w)}(K',\CC_p)^{K_1(\frakN)/K'}
&=H^0(\cX_{K'}^\tor,j^\dagger\omegaunder^{(\kunder,w)})^{K_1(\frakN)/K'} \\
&=\Directsum_\frakc H^0(\cY_{K'}^{\frakc,\tor}/\Delta_{K'}, j^\dagger\omegaunder^{(\kunder,w)})^{K_1(\frakN)/K'} \\
&=\Directsum_\frakc H^0(\cY_{K'}^{\frakc,\tor}, j^\dagger\omegaunder^{(\kunder,w)})^{\Delta_{K'},K_1(\frakN)/K'} \\
&=\Directsum_\frakc H^0(\cY_{K_1(\frakN)}^{\frakc,\tor}, j^\dagger\omegaunder^{(\kunder,w)})^{\Delta_{K'}} \\
&=\Directsum_\frakc H^0(\cY_{K_1(\frakN)}^{\frakc,\tor}, j^\dagger \omegaunder^{(\kunder,w)})^{\Delta_{K}}.
\end{align*}
Here the third equality used the fact that from the choice of $K'$, the quotient by $\Delta_{K'}$ gives isomorphism of geometric components of $M_{K'}^\frakc$ onto its image.
 When $\kappa$ is a classical weight, the modular sheaf of weight $\kappa$ is the classical modular sheaf (\cite[Corollary 3.10]{AIP16}).
 Hence the last term is the definition of $M^\dagger(K_1(\frakN),\kappa)$ (\cite[Definition 4.1, 4.6]{AIP16}).

 The cuspidal case follows by the same argument.
\end{proof}

There is a corresponding quasi-coherent sheaf of overconvergent (resp. cuspidal) Hilbert modular forms $M^\dagger(K_1(\frakN))$ (resp. $S^\dagger(K_1(\frakN))$) over $\cW$, whose value on an admissible affinoid open $\cU \subset \cW$ is $M^\dagger(K_1(\frakN),\cU)$ (resp. $S^\dagger(K_1(\frakN),\cU)$).
Moreover, any overconvergent cuspidal Hilbert modular form can be put in a $p$-adic family:
\begin{prop}[{\cite[Theorem 4.4]{AIP16}}]
 Let $\cU\subset \cW$ be an admissible affinoid open, and $\kappa\in \cU(\CC_p)$.
 Then the specialization map 
\[
 S^\dagger(K_1(\frakN))(\cU) \rightarrow S^\dagger_\kappa(K_1(\frakN), \CC_p)
\]
 is surjective.
\end{prop}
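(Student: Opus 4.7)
The plan is to unwind the Andreatta--Iovita--Pilloni construction and identify the specialization map with a base-change operation for a sheaf of orthonormalizable Banach modules, where surjectivity is automatic. I would proceed in three stages.

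First, I would recall the geometric setup from \cite{AIP16}. For an admissible affinoid open $\cU = \Sp A_\cU \subset \cW$ with universal weight character $\kappa^\cU$, AIP construct a sheaf $\omegaunder^{\kappa^\cU}$ on a suitable formal model of a strict neighborhood $\cX^\tor(v)$ of the multiplicative ordinary locus inside $\cX_{K_1(\frakN)}^\tor$, interpolating the classical automorphic line bundles. The module $S^\dagger(K_1(\frakN))(\cU)$ is then, up to taking a limit over $v$, the space of global sections of $\omegaunder^{\kappa^\cU}(-\sfD)$ pulled back to $\cX^\tor(v) \hat{\times} \cU$. The central structural feature, proven in \cite[Sec.~3--4]{AIP16}, is that this sheaf is relatively a Banach sheaf, and its module of global sections is an orthonormalizable (ONB-able) Banach $A_\cU$-module.

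Second, I would check that the formation of $\omegaunder^{\kappa^\cU}(-\sfD)$ is compatible with base change in $\cU$. Given a morphism $\cU' \to \cU$ of affinoid opens pulling back $\kappa^\cU$ to $\kappa^{\cU'}$, the pullback of the interpolation sheaf is tautologically $\omegaunder^{\kappa^{\cU'}}(-\sfD)$; this is built into the very definition via the Hodge--Tate period map and the canonical subgroup theory. Specializing to $\cU' = \Sp \CC_p$ at the classical weight $\kappa$ recovers the sheaf $j^\dagger \omegaunder^{(\kunder,w)}(-\sfD)$ of Section~\ref{subsec:OC} in the limit over $v$, whose global sections (invariant under the appropriate level quotient) are by the preceding lemma exactly $S^\dagger_\kappa(K_1(\frakN),\CC_p)$.

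Third, I would conclude surjectivity from ONB-ability: if $M$ is an orthonormalizable Banach $A_\cU$-module with orthonormal basis $\{e_i\}$, then for any continuous character $\kappa \colon A_\cU \to \CC_p$, the completed base change $M \,\hat{\tensor}_{A_\cU,\kappa}\, \CC_p$ has $\{e_i \tensor 1\}$ as an orthonormal basis, and the specialization $M \to M \,\hat{\tensor}_{A_\cU,\kappa}\, \CC_p$ is manifestly surjective. Combining this with the identification of the base change with $S^\dagger_\kappa(K_1(\frakN),\CC_p)$ from the second stage yields the claim. The main obstacle is the second stage: verifying base-change compatibility for the interpolation sheaf, especially near the toroidal boundary where one must use the explicit description of $\omegaunder^{\kappa^\cU}$ via Tate objects and $q$-expansions to ensure that the cuspidality condition $(-\sfD)$ interpolates correctly. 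Fortunately, this is precisely what \cite[Theorem~4.4]{AIP16} carries out, and the proof above is essentially a repackaging of that result.
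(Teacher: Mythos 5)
The paper does not prove this proposition: it is quoted directly from \cite[Theorem~4.4]{AIP16}, so there is no internal proof to compare against. Judged on its own terms, your outline puts the logical weight in the wrong place and leaves the genuinely hard step unaddressed. The third stage is essentially vacuous: whether or not $M = S^\dagger(K_1(\frakN))(\cU)$ is orthonormalizable over $A_\cU$, the reduction map $M \to M/\frakm_\kappa M$ is a quotient map and hence trivially surjective. (And as literally written, $M \to M\dtensor_{A_\cU,\kappa}\CC_p$ fails to be surjective when $A_\cU$ is a $\QQ_p$-affinoid algebra, since $\kappa(A_\cU)$ is merely a finite extension of $\QQ_p$; you would need to first extend coefficients to $\CC_p$.) Orthonormalizability, or Buzzard's Property~(Pr), matters for running the eigenvariety machine, not for surjectivity of specialization; it is a red herring here.

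The actual content of the theorem lives in what you confine to the second stage and largely wave away. That the \emph{sheaf} $\omegaunder^{\kappa^\cU}(-\sfD)$ restricts correctly along $\{\kappa\}\hookrightarrow\cU$ is indeed close to tautological. What is not tautological is that its \emph{global sections} do: the specialization map $H^0(\omegaunder^{\kappa^\cU}(-\sfD))\dtensor_{A_\cU,\kappa}\CC_p \to H^0(\omegaunder^{\kappa}(-\sfD))$ has no a priori reason to be surjective, because taking $H^0$ need not commute with base change along a closed point $\{\kappa\}\hookrightarrow\cU$. One has to control higher cohomology — for instance by showing that $H^1$ of the family sheaf twisted by $\frakm_\kappa$ vanishes on a suitable quasi-compact model — and this cohomological vanishing, which is exactly where the cuspidality twist $(-\sfD)$ enters, is the crux of the Andreatta--Iovita--Pilloni argument. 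Your proposal never addresses it; closing with ``this is precisely what \cite[Theorem~4.4]{AIP16} carries out'' is circular, since that theorem is the statement under proof.
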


Let $\cH^{\frakN p}$ be the abstract Hecke algebra away from $\frakN p$.
This is a commutative $\QQ_p$-algebra generated by the operators $T_\frakl$ and $S_\frakl$ for $\frakl$ prime to $\frakN p$.
Let $\cU_p$ be the $\QQ_p$-algebra generated by the $U_\frakp$-operators for all $\frakp\in\Sigma_p$.
Andreatta--Iovita--Pilloni defined an action of the algebra $\cH^{\frakN p}\tensor_{\QQ_p} \cU_p$ on $S^\dagger(K_1(\frakN))$ (\cite[Sec.\ 4.3]{AIP16}).
Moreover, $U_p$ is a compact operator (\cite[Lemma 3.27]{AIP16}).
Then by Buzzard's eigenvariety machine (\cite[Construction 5.7]{Buz07}), there exists a rigid analytic space $\cE_\frakN$, the eigenvariety associated to $(\cW,S^\dagger(K_1(\frakN)),\cH^{\frakN p}\tensor_{\QQ_p}\cU_p,, U_p)$, as well as a weight map $\wt\colon \cE_\frakN \rightarrow \cW$.
We sketch the construction: for any admissible affinoid open $\cU \subset \cW$, let $\cZ_\cU$ be the spectral variety of $U_p$, i.e., the closed subspace of $\cU \times \AA^1$ cut out by the characteristic series of $U_p$.
There is an admissible cover of $\cZ_\cU$, consisting of affinoid subdomains $\cV$ of $\cZ_\cU$ such that there exists an affinoid subdomain $\cU'$ of $\cU$ with the preimage of $\cU'$ containing $\cV$ and also $\cV$ surjecting onto $\cU'$ (\cite[Theorem 4.6]{Buz07}).
Since over $\cV$ only finitely many non-zero $U_p$-eigenvalues can show up, one can split off the finite-dimensional $U_p$-generalized eigenspace $N$ of $S^\dagger(K_1(\frakN))(\cU)$ corresponding to these finitely many $U_p$-eigenvalues.
Let $\cH(\cV)$ be the image of $\cH^{\frakN p}\tensor_{\QQ_p} \cU_p$ inside $\End_{\cO(\cU')}N$.
Then $\operatorname{Sp}\cH(\cV) \rightarrow \cV$ is a finite morphism, and the $\operatorname{Sp}\cH(\cV)$'s glue into $\cE_{\frakN}$.

\begin{center} 
\begin{tikzcd}
 \cV \ar[r,symbol=\subset] \ar[rd,dotted,twoheadrightarrow] & \cZ_\cU \times_{\cU} \cU' \ar[r,symbol=\subset] \ar[d, twoheadrightarrow] & \cZ_\cU \ar[d,twoheadrightarrow] \\
 & {}^\exists \cU' \ar[r,symbol=\subset] & \cU 
\end{tikzcd}
\end{center}

\begin{rmk}
 In \cite{AIP16}, Andreatta--Iovita--Pilloni actually used the $\mu_N$-level where $N\in\ZZ_{\geq4}$ instead of $K_1(\frakN)$-level. 
 But the construction works through the more general setting.
\end{rmk}

We summarize some important properties of the cuspidal Hilbert eigenvariety.
\begin{thm}[{\cite[Theorem 5.1]{AIP16}}] \hfill
 \begin{enumerate} 
  \item The cuspidal Hilbert eigenvariety $\cE_\frakN$ is equidimensional of dimension $d+1$.
  \item The weight map $\wt$ is, locally on $\cE_\frakN$ and $\cW$, finite and surjective.
  \item For all $\kappa\in \cW(\CC_p)$, $\wt^{-1}(\kappa)$ is in bijection with the finite-slope Hecke eigenvalues appearing in $S^\dagger_{\kappa}(K_1(\frakN),\CC_p)$.
  \item There is a universal Hecke character $\lambda\colon\cH^{\frakN p}\tensor_{\QQ_p} \cU_p \rightarrow \cO_{\cE_\frakN}$, glued from $\cH^{\frakN p} \tensor_{\QQ_p} \cU_p \surj \cH(\cV)$.
  \item There is a universal pseudo-character
	\[
	\cT \colon \Gal_{F}\rightarrow\cO_{\cE_\frakN}
	\]
	unramified outside $p\frakN$ such that $\cT(\Frob_\frakl^{-1})=\lambda(T_\frakl)$ for all prime $\frakl$ of $\cO_F$ prime to $\frakN p$.
	Here as before $\Frob_\frakl$ is the arithmetic Frobenius at $\frakl$.
  \item For all $x\in \cE_\frakN$, there is a semisimple Galois representation
	\[
	\rho_x \colon \Gal_{F}\rightarrow\GL_2(\bar{k}(x)) 
	\]
	characterized by $\Tr(\rho_x) = \left.\cT\right|_x$, $\det(\rho_x)(\Frob_\frakl^{-1}) = \Nm_{F/\QQ}(\frakl) \left.\lambda\right|_x (S_\frakl)$.
Here $\left.\right|_x$ denotes composing with the specialization map $\cO_{\cE_\frakN} \rightarrow \bar{k}(x)$ to the residue field $\bar{k}(x)$ of $x$.
 \end{enumerate}
\end{thm}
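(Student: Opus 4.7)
The plan is to apply Buzzard's eigenvariety machine to the Fr\'echet module $S^\dagger(K_1(\frakN))$ on $\cW$ with its compact operator $U_p$, and then to interpolate classical Galois representations into a universal pseudo-character on the resulting rigid space.

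First I would note that the input data to Buzzard's machine is in place: locally on an affinoid $\cU = \Sp A_\cU \subset \cW$, the module $S^\dagger(K_1(\frakN), \cU)$ is an orthonormalizable (or a direct summand of such) Banach $A_\cU$-module carrying a continuous $\cH^{\frakN p} \tensor_{\QQ_p} \cU_p$-action, and $U_p$ is compact by the AIP construction. Running the machine locally builds the spectral variety $\cZ_\cU \subset \cU \times \AA^1$ as the Fredholm hypersurface cut out by the characteristic power series of $U_p$, covers $\cZ_\cU$ admissibly by affinoids $\cV$ each finite surjective onto some open affinoid $\cU' \subset \cU$, and attaches to $\cV$ the finite $\cV$-algebra $\cH(\cV)$ which is the image of $\cH^{\frakN p} \tensor \cU_p$ in the endomorphisms of the associated generalized eigenspace. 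Gluing the $\operatorname{Sp}\cH(\cV)$ produces $\cE_\frakN$ with its weight map $\wt$ and universal Hecke character $\lambda$.

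Properties (1)--(4) would then follow from general features of Buzzard's construction. The equidimensionality of $\cE_\frakN$ of dimension $d+1$ reduces, via the finite map $\cE_\frakN \to \cZ_\cU$ on each piece, to equidimensionality of Fredholm hypersurfaces in $\cU \times \AA^1$, which has dimension $\dim \cU = d+1$. Local finiteness and surjectivity of $\wt$ is the defining property of the admissible cover $\{\cV\}$. The fiber description (3) is by unwinding: over $\kappa \in \cU(\CC_p)$, the points of $\cE_\frakN$ lying above $\kappa$ correspond to systems of eigenvalues on $S^\dagger_\kappa(K_1(\frakN), \CC_p)$ with nonzero $U_p$-eigenvalue, which is exactly the finite-slope condition. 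The universal Hecke character (4) is literally the system of maps $\cH^{\frakN p} \tensor \cU_p \surj \cH(\cV) \hookrightarrow \cO_{\operatorname{Sp}\cH(\cV)}$ patched together.

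For (5) I would construct $\cT$ by $p$-adic interpolation. Arithmetic weights $((k_\tau)_\tau, w)$ with $w \geq k_\tau \geq 2$ are Zariski-dense in $\cW$; using local finiteness of $\wt$ together with the small-slope classicality theorem of Tian--Xiao (Theorem~\ref{thm:strongcl}), which gives a large supply of classical points inside each affinoid $\operatorname{Sp}\cH(\cV)$, one shows that classical cuspidal points form a Zariski-dense subset of $\cE_\frakN$. At each such classical point, Theorem~\ref{thm:Galrep} provides a semisimple Galois representation, hence a $2$-dimensional pseudo-character on $\Gal_F$ unramified outside $p\frakN$, whose value at $\Frob_\frakl^{-1}$ matches $\lambda(T_\frakl)$. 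By Chenevier's theory of pseudo-characters over rigid spaces (or directly by Rouquier's determinant formalism applied affinoid-by-affinoid), these continuous traces glue into a unique continuous $\cT \colon \Gal_F \to \cO_{\cE_\frakN}$, since the Zariski-dense set on which two such pseudo-characters agree forces equality. For (6), at each $x \in \cE_\frakN$ I would specialize $\cT$ to obtain a $2$-dimensional pseudo-character over $\bar k(x)$ and invoke the theorem of Taylor--Nyssen--Rouquier, which promotes a $2$-dimensional pseudo-character over a field to a unique semisimple representation $\rho_x$; the determinant identity in (6) then follows from the pseudo-character formula at $\Frob_\frakl^{-1}$ together with $T_\frakl^2 - \cT(\Frob_\frakl^{-1}) T_\frakl + \Nm_{F/\QQ}(\frakl) \lambda(S_\frakl) \id = 0$ by Cayley--Hamilton and matching at classical points.

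The main obstacle would be step (5): guaranteeing Zariski-density of classical cuspidal points so that the interpolation is well-defined and unique. Density of arithmetic weights in $\cW$ is elementary, but transporting this to density in $\cE_\frakN$ requires showing that on each piece $\operatorname{Sp}\cH(\cV) \twoheadrightarrow \cU'$ one can find classical arithmetic weights $\kappa \in \cU'(\CC_p)$ whose $U_p$-slopes lie in the small-slope range of Theorem~\ref{thm:strongcl}; this uses that $U_p$-slopes are bounded on $\cV$ (Fredholm theory) and a Coleman--Mazur style accumulation argument, together with the need to invoke the classical Hilbert Galois representation theorem (Carayol, Wiles, Taylor, Blasius--Rogawski, Saito, Skinner), which itself is a deep input assembled from several independent constructions.
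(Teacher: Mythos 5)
The paper does not prove this statement; it is quoted verbatim from \cite[Theorem 5.1]{AIP16}, and the paragraph preceding it only sketches the input to Buzzard's eigenvariety machine (the Fr\'echet sheaf $S^\dagger(K_1(\frakN))$, the Hecke algebra, compactness of $U_p$, the admissible cover of the spectral variety, and the gluing of $\operatorname{Sp}\cH(\cV)$). Your reconstruction follows exactly this route for parts (1)--(4) and then supplies the standard interpolation argument for (5)--(6): classical points are Zariski dense by small-slope classicality plus density of arithmetic weights, the classical Galois representations give traces that glue into a pseudo-character $\cT$ by continuity and Zariski density, and specialization plus Taylor--Nyssen--Rouquier yields $\rho_x$. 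This is indeed the argument in \cite{AIP16}, so your proposal is a faithful reconstruction of the cited proof rather than a new route.

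Two small refinements worth noting. For (1), equidimensionality of $\cE_\frakN$ does not follow from equidimensionality of the Fredholm hypersurface $\cZ_\cU$ by finiteness alone; one also uses that $\cH(\cV)$ embeds into $\End_{\cO(\cU')}(N)$ for a finite \emph{locally free} $\cO(\cU')$-module $N$, hence is torsion-free over $\cO(\cU')$, which rules out lower-dimensional components. For (6), the determinant identity $\det(\rho_x)(\Frob_\frakl^{-1}) = \Nm_{F/\QQ}(\frakl)\,\lambda|_x(S_\frakl)$ is cleaner to obtain directly from the second Newton identity $\det = \tfrac{1}{2}\bigl(\cT(\cdot)^2 - \cT(\cdot^2)\bigr)$ applied to the two-dimensional pseudo-character (or, in Chenevier's framework, by interpolating the determinant alongside the trace), rather than by Cayley--Hamilton, since at the level of abstract pseudo-characters there is no matrix to apply Cayley--Hamilton to until after Taylor--Nyssen--Rouquier has been invoked pointwise.
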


Below we will prove the equivalence of (\ref{cond:ram}) and (\ref{cond:geneigen}) in Theorem~\ref{thm:main}.
Recall that if $f\colon \cX \rightarrow \cY$ is a morphism of rigid analytic varieties, then $f$ is \emph{\'etale at $x\in \cX$} if $\cO_{\cX,x}$ is flat over $\cO_{\cY,f(x)}$ and $\cO_{\cX,x}/\frakm_{f(x)}\cO_{\cX,x}$ is a finite separable field extension of the residue field $\cO_{\cY,f(x)}/\frakm_{f(x)}$ of $f(x)$ (\cite[Definition 1.7.10]{Huber}).
\begin{lem} \label{lem:1and2}
 Let $f\in S^\dagger_{\kappa}(K_1(\frakN), \CC_p)$ be an overconvergent cuspidal Hilbert Hecke eigenform of finite slope.
 Let $x \in \cE_\frakN$ be the point corresponding to $f$.
 Then $x$ is a non-\'etale point with respect to $\wt \colon \cE_\frakN \rightarrow \cW$ if and only if there exists an overconvergent cuspidal Hilbert generalized Hecke eigenform $f'$ with the same Hecke eigenvalues and weight as $f$, but which is not a scalar multiple of $f$.
\end{lem}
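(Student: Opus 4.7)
The approach is to unwind Buzzard's construction of $\cE_\frakN$ near $x$ and identify the local ring of the weight-map fiber with the image of the Hecke algebra acting on a finite-dimensional generalized eigenspace. First I would choose, via the eigenvariety machine, an admissible affinoid neighborhood $\cV = \operatorname{Sp}\cH(\cV)$ of $x$ in $\cE_\frakN$ lying over an affinoid $\cU' = \operatorname{Sp} A \subset \cW$ containing $\kappa \defeq \wt(x)$. By construction $\cH(\cV)$ is the image of $\cH^{\frakN p}\tensor_{\QQ_p}\cU_p$ in $\End_A(N)$, where $N$ is the finite projective $A$-submodule of $S^\dagger(K_1(\frakN))(\cU')$ obtained by splitting off the $U_p$-generalized eigenspaces for the relevant finite collection of slopes. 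After shrinking $\cU'$ if necessary, $N\tensor_A \bar{k}(\kappa)$ contains the full generalized Hecke eigenspace $N[x]\subset S^\dagger_\kappa(K_1(\frakN),\CC_p)$ attached to the system of eigenvalues at $x$.

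Next I would pass to the fiber at $\kappa$. The $\bar{k}(\kappa)$-algebra $\cH(\cV)\tensor_A \bar{k}(\kappa)$ is finite-dimensional, embeds faithfully into $\End_{\bar{k}(\kappa)}(N\tensor_A\bar{k}(\kappa))$, and decomposes as a product over the finitely many points $y \in \wt^{-1}(\kappa)\cap \cV$. The local factor $T_x$ at $x$ is a local Artinian $\bar{k}(\kappa)$-algebra with residue field $\bar{k}(x)$, and is precisely the image of $\cH^{\frakN p}\tensor \cU_p$ inside $\End_{\bar{k}(\kappa)}(N[x])$.

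Since $\cE_\frakN$ is equidimensional of dimension $d+1=\dim\cW$, $\wt$ is locally finite surjective, and $\cW$ is smooth, rigid-analytic miracle flatness shows $\wt$ is finite flat in a neighborhood of $x$. Hence étaleness at $x$ in Huber's sense amounts to the fiber $T_x$ being a finite separable field extension of $\bar{k}(\kappa)$; in characteristic zero this just means $T_x = \bar{k}(x)$. In turn, $T_x = \bar{k}(x)$ if and only if every Hecke operator acts on $N[x]$ as a scalar, equivalently, every element of $N[x]$ is a genuine Hecke eigenform with the eigensystem of $f$. Invoking multiplicity one for cuspidal Hilbert Hecke eigenforms with a fixed eigensystem (including the $U_\frakp$-eigenvalues), this forces $N[x] = \bar{k}(\kappa)\cdot f$. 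Negating, $x$ is non-\'etale if and only if there exists $f' \in N[x]$ that is not a scalar multiple of $f$, yielding condition (\ref{cond:geneigen}).

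The main obstacle is supplying two auxiliary inputs: rigid-analytic miracle flatness for $\wt$, which relies on the equidimensionality of $\cE_\frakN$, the smoothness of $\cW$, and a Cohen--Macaulay input for $\cE_\frakN$ near $x$; and multiplicity one for cuspidal Hilbert eigenforms with prescribed Hecke (including $U_\frakp$) eigenvalues, ensuring the genuine eigenspace inside $N[x]$ is exactly $\bar{k}(\kappa)\cdot f$. Both are standard but deserve explicit citation; everything else is routine unpacking of Buzzard's construction.
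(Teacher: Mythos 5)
Your proposal follows the same route as the paper: reduce \'etaleness to the fiber ring $\cO_{\cE_\frakN,x}/\frakm_\kappa\cO_{\cE_\frakN,x}$ being a field (using flatness of $\wt$ and that the residue characteristic is $0$), then identify the failure of this with the existence of an improper generalized Hecke eigenform in $S^\dagger_\kappa(K_1(\frakN),\CC_p)$. The paper's proof is terser and leaves implicit both the flatness input you obtain via miracle flatness and the multiplicity-one step you correctly flag as needed to pass from ``not annihilated by $\frakm_x$'' to ``not a scalar multiple of $f$''.
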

\begin{proof}
 We have $\kappa = \wt(x)$.
 Since $\wt$ is locally-on-the-domain finite flat, $\cO_{\cE_\frakN,x}$ is flat over $\cO_{\cW,\kappa}$.
 Also $\cO_{\cW,\kappa}/\frakm_{\kappa}$ is a field of characteristic $0$, for which all finite field extensions are separable.
 Hence by defintion, $x$ is a non-\'etale point with respect to $\wt$ if and only if $\cO_{\cE_{\frakN},x}/\frakm_{\kappa}\cO_{\cE_{\frakN},x}$ is not a field.
This means that $\frakm_{\kappa} \cO_{\cE_\frakN,x} \subsetneq \frakm_x$.
Or equivalently there exists an overconvergent cuspidal Hilbert modular form $f'\in S^\dagger_{\kappa}(K_1(\frakN), L_\sP)$ annihilated by some power of $\frakm_x$ but not $\frakm_x$ itself, 
i.e., $f'$ is a generalized Hecke eigenform with the same Hecke eigenvalues as $f$, but which is not a scalar multiple of $f$.
\end{proof}

\section{Galois deformations} \label{sec:Galdeform}
Let $(\kunder,w)$ be a cohomological weight.
Let $f\in S_{(\kunder,w)}(K_1(\frakN)\Iw_p, L_\sP)$ be a Hecke eigenform of finite slope.
 For each prime $\frakp\in\Sigma_p$, let $\lambda_\frakp$ be the $U_\frakp$-eigenvalue of $f$.
 Assume that $\val_p(\lambda_\frakp)\neq\frac{w-1}2$ for any $\frakp\in\Sigma_p$. 
 Let $x\in\cE$ be the point on the cuspidal Hilbert eigenvariety $\cE \defeq \cE_\frakN$ corresponding to $f$.
 Let $\rho \colon \Gal_F \rightarrow \GL(V)$ be the $p$-adic Galois representation corresponding to $f$ as in Theorem~\ref{thm:Galrep}.
Here $V$ is a $2$-dimensional $\bar{k}(x)$-vector space, 
where $\bar{k}(x)$ is the residue field of $x\in\cE$.
For all $\frakp\in\Sigma_p$, we have $\Fil^{\frac{w-k_{\tau_\frakp}}2}D_\cris(\left.V\right|_{\Gal_{F_\frakp}})^{\varphi=\lambda_\frakp}\neq 0$.

The goal of this section is to use Galois deformation theory to prove the following theorem.
\begin{thm} \label{thm:4to1}
 If there exists $\frakp\in \Sigma_p$ such that $\left.\rho\right|_{\Gal_{F_\frakp}}$ splits and $\val_p(\lambda_\frakp) = \frac{w+k_{\tau_\frakp}-2}2$, then $x$ is a ramification point of $\cE$.
 Moreover, the tangent space of the fiber of $\wt$ at $x$ has dimension $\geq\#\{\frakp\in\Sigma_p \mid \text{$\left.\rho\right|_{\Gal_{F_\frakp}}$ splits and $\val_p(\lambda_\frakp) = \frac{w+k_{\tau_\frakp}-2}2$}\}$.
\end{thm}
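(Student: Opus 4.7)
The strategy, following Bergdall's approach for $F = \QQ$, is to interpret the tangent space $T_x\cE$ as a trianguline Galois deformation space, identify the subspace corresponding to the fiber of $\wt$, and carry out a local computation at each prime $\frakp \mid p$ satisfying the critical-slope-and-split condition.

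First, using the analytic continuation of crystalline periods over the eigenvariety (Kisin for $F = \QQ$, Kedlaya--Pottharst--Xiao and Liu in general), I would embed $T_x\cE$ into the space of first-order deformations $\rho_\epsilon \colon \Gal_F \to \GL_2(\bar k(x)[\epsilon])$ of $\rho$ which are unramified outside $p\frakN$ and which, at each $\frakp \mid p$, admit a trianguline structure on $D_\rig(\rho_\epsilon|_{\Gal_{F_\frakp}})$ lifting the triangulation
\[
0 \to \cR(\delta_{\frakp,1}) \to D_\rig(\rho|_{\Gal_{F_\frakp}}) \to \cR(\delta_{\frakp,2}) \to 0
\]
determined by the refinement $\lambda_\frakp$. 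Under this identification, the map $T_x\cE \to T_{\wt(x)}\cW$ records, at each $\frakp$, the deformation of the $\tau_\frakp$-Sen weight of $\cR(\delta_{\frakp,1})$ together with the (global) deformation of the Sen weight of $\det \rho_\epsilon$; the fiber tangent space $T_x\cE_{\wt(x)}$ is the kernel of this map.

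The crux is a local computation at each critical-split prime $\frakp$. Write $W = V|_{\Gal_{F_\frakp}}$, with Hodge--Tate weights $k_1 = \frac{w-k_{\tau_\frakp}}{2} < k_2 = \frac{w+k_{\tau_\frakp}-2}{2}$. When $\val_p(\lambda_\frakp) = k_2$ and $W$ splits as $W_1 \oplus W_2$, the submodule $\cR(\delta_{\frakp,1})$ corresponds to the line $W_2$ (of Hodge--Tate weight $k_2$), while the complementary line $W_1$ provides an alternate, non-critical triangulation. Using the classification of rank-one $(\varphi,\Gamma)$-modules over the Robba ring and the explicit computation of $\Ext^1$'s between them, I would show that any first-order deformation of the given triangulation must preserve the Sen weight of $\cR(\delta_{\frakp,1})$; equivalently, this coordinate of the composite $T_x\cE \to T_{\wt(x)}\cW$ vanishes. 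Consequently, if $n$ denotes the number of critical-split primes, then the rank of $T_x\cE \to T_{\wt(x)}\cW$ is at most $(d+1) - n$. Combined with $\dim T_x\cE \geq d+1$ (since $\cE$ is equidimensional of dimension $d+1$), this yields
\[
\dim_{\bar k(x)} T_x\cE_{\wt(x)} \;\geq\; (d+1) - \bigl((d+1) - n\bigr) \;=\; n,
\]
proving the theorem; ramification at $x$ follows whenever $n \geq 1$.

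The main obstacle is the local cohomological claim in the critical-slope-and-split case: within the trianguline deformation functor, every first-order perturbation of the chosen triangulation is forced to preserve the Sen weight of $\cR(\delta_{\frakp,1})$. Concretely, this reduces to showing that a certain connecting morphism in the long exact sequence of $H^1$'s of rank-one $(\varphi,\Gamma)$-modules lands inside the crystalline (Bloch--Kato) part, a vanishing that reflects the exceptional presence of a second compatible refinement made possible by the splitting of $W$. Bergdall's argument for $F = \QQ$ runs along these lines via explicit $(\varphi,\Gamma)$-module computations; for general totally real $F$ split at $p$, the same analysis applies prime by prime at each $\frakp$, with the added bookkeeping that the $\tau$-Sen weights across all embeddings must be tracked separately.
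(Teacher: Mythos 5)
Your plan is, at its core, the same strategy as the paper's, generalizing Bergdall's argument for $F=\QQ$: embed $T_x\cE$ into a Galois deformation space via analytic continuation of crystalline periods, show that at each critical-split prime $\frakp$ one of the two Hodge--Tate--Sen weights is forced to stay constant in first-order deformations, and count. The paper makes the local argument not in trianguline $(\varphi,\Gamma)$-module language but directly with $D_\cris$ and Sen's operator on the split representation: it decomposes $H^1(\Gal_{F_\frakp}, V\tensor V^\ast)$ into the four summands $H^1(\Gal_{F_\frakp},\psi_{\frakp,i}\psi_{\frakp,j}^{-1})$ and uses the long exact sequence for $D_\cris$ of the self-extension to show the $(2,2)$-component is crystalline (Lemma~\ref{lem:22crys}); this is the clean version of the ``connecting morphism lands in the crystalline part'' step you identify as the main obstacle. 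Given the split hypothesis, this route is slightly more direct than full trianguline deformation theory, though morally equivalent.

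One imprecision worth flagging: you say the $\frakp$-coordinate of $T_x\cE\to T_{\wt(x)}\cW$ records the Sen weight of $\cR(\delta_{\frakp,1})$, and that at a critical-split $\frakp$ this equals the larger Hodge--Tate weight $\frac{w+k_{\tau_\frakp}-2}{2}$ (the weight of $W_2$). But the weight map's $\tau_\frakp$-coordinate in the paper's normalization is $\nu_{\tau_\frakp}=\frac{w-k_{\tau_\frakp}}{2}$, the \emph{smaller} Sen weight; so the crystalline-period argument does not directly make ``this coordinate'' vanish. Your count still goes through because $(\mu_{\tau_\frakp}=w-1-\nu_{\tau_\frakp})_{\frakp\in\Sigma},\,(\nu_{\tau_\frakp})_{\frakp\notin\Sigma},\,w$ is again a coordinate system on $T_{\wt(x)}\cW$, and constancy of $\mu_{\tau_\frakp}$ at the $n$ critical-split primes is $n$ independent linear conditions, giving rank $\leq d+1-n$. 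The paper handles this bookkeeping via condition~(\ref{cond:HT}) (the sum of the two $\frakp$-Sen weights is the same for all $\frakp$, a constraint already built into the weight space) together with an auxiliary functor $D^\frakp$ fixing the lower Sen weight at a single $\frakp_0\in\Sigma$; Lemma~\ref{lem:constHT} then propagates constancy of both Sen weights across all of $\Sigma$. You would need to make one of these two bookkeeping devices explicit to close the argument.
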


\subsection{Galois deformation rings} \label{subsec:Galdeformring}
In this subsection, we define various Galois deformation rings needed in the proof of Theorem~\ref{thm:4to1}.

We define a deformation functor $D$ on the category of Artinian local $\bar{k}(x)$-algebras with residue field $\bar{k}(x)$.
For any such $\bar{k}(x)$-algebra $A$, let $D(A)$ be the set of strict equivalence classes of continuous representations $\rho_A \colon \Gal_F \rightarrow \GL(V_A)$ deforming $\rho$ such that
\begin{enumerate}
 \item \label{cond:min}
for all primes $\frakl$ of $F$ not dividing $p$, $\rho_A$ and $\rho$ are the same after restricting to the inertia subgroup at $\frakl$,
 \item \label{cond:HT}
for $\frakp\in\Sigma_p$, the sum of the two Hodge--Tate--Sen weights of $\left.V_A\right|_{\Gal_{F_\frakp}}$ is independent of $\frakp$, and
 \item \label{cond:periodmap}
for all $\frakp\in\Sigma_p$, there exists a lift $\widetilde{\lambda}_\frakp\in A$ of $\lambda_\frakp$ such that $\Fil^{\frac{\widetilde{w}-\widetilde{k}_{\tau_\frakp}}2}D_\cris(V_A|_{\Gal_{F_\frakp}})^{\varphi=\widetilde\lambda_\frakp}\neq0$, where $\widetilde{w}, \widetilde{k}_{\tau_\frakp}\in A$ are lifts of $w,k_{\tau_\frakp}$ such that $\frac{\widetilde{w}-\widetilde{k}_{\tau_\frakp}}2$ and $\frac{\widetilde{w}+\widetilde{k}_{\tau_\frakp}-2}2$ are Hodge--Tate--Sen weights of $V_A|_{\Gal_{F_\frakp}}$.
\end{enumerate}

Let $D^0$ be the sub-functor of $D$ of deformations with the additional condition
\begin{enumerate}[resume]
 \item \label{cond:fiber}
$\rho_A$ has constant $\frakp$-Hodge--Tate--Sen weights for all $\frakp\in\Sigma_p$.
\end{enumerate}

\begin{lem} \label{lem:rep}
 $D$ and $D^0$ are pro-representable by some complete local $\bar{k}(x)$-algebras $R$ and $R^0$, respectively.
\end{lem}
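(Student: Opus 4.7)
The plan is to apply Mazur's theorem to first represent the unrestricted deformation functor of $\rho$, and then show that each of the conditions defining $D$ (and the extra condition defining $D^0$) cuts out a closed sub-functor, i.e., is relatively pro-representable. The quotient ring ideals can then be intersected to produce $R$ and $R^0$.

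First, I would check that $\End_{\bar k(x)[\Gal_F]}(V)=\bar k(x)$: since $f$ is a classical cuspidal Hilbert Hecke eigenform, the associated global Galois representation $\rho$ is absolutely irreducible by the constructions cited in Theorem~\ref{thm:Galrep} (Carayol, Taylor, Blasius--Rogawski), so $\rho$ has only scalar endomorphisms. Mazur's theorem then gives a universal deformation ring $R^{\mathrm{un}}$ representing the unrestricted deformation functor of strict equivalence classes of continuous lifts of $\rho$. It remains to show that each condition (\ref{cond:min})--(\ref{cond:periodmap}) (and the auxiliary condition (\ref{cond:fiber})) is relatively representable over $R^{\mathrm{un}}$, which by standard Schlessinger-type arguments reduces to checking stability under fiber products of Artinian local $\bar k(x)$-algebras.

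For condition (\ref{cond:min}) (minimal ramification at primes $\frakl\nmid p$), this is a classical deformation condition: requiring $\rho_A|_{I_\frakl}\isom \rho|_{I_\frakl}\otimes_{\bar k(x)}A$ is preserved under fiber products and sub-/quotient-objects, so it cuts out a quotient of $R^{\mathrm{un}}$. For conditions (\ref{cond:HT}) and (\ref{cond:fiber}), the key tool is Sen's theory: for each $\frakp\in\Sigma_p$, the family of continuous representations of $\Gal_{F_\frakp}$ parametrized by a complete local $\bar k(x)$-algebra carries a canonical Sen operator $\Theta_\frakp$ whose characteristic polynomial has coefficients in the base and varies functorially. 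The equality $\Tr(\Theta_\frakp)=\Tr(\Theta_{\frakp'})$ (for condition (\ref{cond:HT})) and the prescription of the full characteristic polynomial (for condition (\ref{cond:fiber})) are therefore closed conditions on the deformation space.

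The substantive step is condition (\ref{cond:periodmap}): the existence of a Frobenius eigenvector in $D_\cris$ lifting the refinement determined by $\lambda_\frakp$. Here the hypothesis $\val_p(\lambda_\frakp)\neq\frac{w-1}{2}$ enters decisively, because together with the product formula for the two crystalline Frobenius eigenvalues of $\rho|_{\Gal_{F_\frakp}}$, it forces these two eigenvalues to have distinct $p$-adic valuations, hence to be distinct. Under this non-criticality hypothesis, the existence of a crystalline period with a specified Frobenius eigenvalue is known to spread out over the universal deformation ring: this is precisely the analytic continuation of crystalline periods established by Kisin~\cite{Kis03} for $F=\QQ$ and by Kedlaya--Pottharst--Xiao~\cite{KPX14} and Liu~\cite{Liu15} in general (applied prime-by-prime to $\Gal_{F_\frakp}$). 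The locus of trianguline deformations with the prescribed refinement is a closed formal subscheme of $\Spf R^{\mathrm{un}}$, which provides the ideal cutting out condition (\ref{cond:periodmap}).

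Taking $R$ to be the quotient of $R^{\mathrm{un}}$ by the sum of the ideals coming from (\ref{cond:min})--(\ref{cond:periodmap}), and $R^0$ to be the further quotient by the ideal coming from (\ref{cond:fiber}), gives pro-representing objects for $D$ and $D^0$. The main obstacle is really the verification of relative representability for condition (\ref{cond:periodmap}); once this is granted from the cited work, the remaining steps are routine.
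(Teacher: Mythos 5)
Your proposal is correct and follows essentially the same route as the paper's proof: absolute irreducibility of $\rho$ (from classicality and cuspidality) gives pro-representability of the unrestricted deformation functor; the minimal-ramification condition at $\frakl\nmid p$, the crystalline-period condition (using the hypothesis $\val_p(\lambda_\frakp)\neq\frac{w-1}2$ to force distinct Frobenius eigenvalues, so Kisin's Proposition 8.13 in \cite{Kis03} applies), and the Sen-theoretic conditions on Hodge--Tate--Sen weights each cut out quotients, yielding $R$ and $R^0$. The only cosmetic difference is that the paper imposes the conditions in a specific order (first producing an intermediate ring $S$ for conditions (1) and (3), then quotienting by the Sen condition), while you impose them simultaneously on $R^{\mathrm{un}}$; these are the same.
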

\begin{proof}
 Since $f$ is classical and cuspidal, $\rho_f$ is absolutely irreducible. 
 Hence the full deformation functor is pro-representable (\cite[\textsection 10]{Ma} \cite[Lemma 9.3]{Kis03}).
 Condition~(\ref{cond:min}) is a deformation condition by the proof of \cite[Proposition~7.6.3(i)]{BC}.
 Since we assume that for any $\frakp\in\Sigma_p$, the $U_\frakp$-slope of $f$ is not $\frac{w-1}2$, the $\varphi$-eigenvalues on $D_\cris(\left.V\right|_{\Gal_{F_\frakp}})$ has multiplicity one.
 Hence we may apply \cite[Proposition~8.13]{Kis03}, which says that condition~(\ref{cond:periodmap}) is a deformation condition.

Let $S$ be the universal deformation ring pro-representing the deformation functor of $\rho$ with condition~(\ref{cond:min}) and (\ref{cond:periodmap}).
For $\frakp\in \Sigma_p$, let $S_\frakp$ be the versal deformation ring for $\left.\rho\right|_{\Gal_{F_\frakp}}$.
Write $\varphi_\frakp \colon \Spf S \rightarrow \Spf S_\frakp$ for the map induced by restricting a Galois deformation to the decomposition group at $\frakp$.
By \cite[Theorem, p.659]{Sen88}, given $\frakp\in\Sigma_p$, the sum of Hodge--Tate--Sen weights of the universal deformation $\left.V_S\right|_{\Gal_{F_\frakp}}$ is an analytic function $f_\frakp$ on $\Spf S_p$.
One can then construct the universal deformation ring $R$ of $\rho$ with condition~(\ref{cond:min}), (\ref{cond:HT}) and (\ref{cond:periodmap}) by taking the quotient of $S$ by the ideal generated by $\varphi_\frakp^\ast f_\frakp - \varphi_{\frakp'}^\ast f_{\frakp'}$ with $\frakp, \frakp'\in\Sigma_p$ distinct $p$-adic primes of $F$.
 We thus conclude the pro-representability of $D$.

As for $D^0$, since we assumed the weight of $f$ is cohomological, for any $\frakp\in\Sigma_p$, the two $\frakp$-Hodge--Tate--Sen weights are distinct.
Hence the condition that a deformation has a constant $\frakp$-Hodge--Tate--Sen weights can be described as the vanishing of symmetric polynomials in the two $\frakp$-Hodge--Tate--Sen weights.
Again using \cite[Theorem, p.659]{Sen88}, one may construct the universal deformation ring $R^0$ of $D^0$ as a quotient of $R$.
\end{proof}

\begin{prop} \label{prop:R=T} \hfill
\begin{enumerate} 
 \item The tangent space $T_x\cE\defeq \Hom_{\bar{k}(x)}(\cO_{\cE,x},\bar{k}(x)[\varepsilon]/\varepsilon^2)$ of $\cE$ at $x$ is a subspace of $D(\bar{k}(x)[\varepsilon]/\varepsilon^2)$.
 \item Let $\cE_{\wt(x)}\defeq \cE \times_\cW \wt(x)$ be the fiber of $\wt$ at $\wt(x)$.
       Then the tangent space $T_x\cE_{\wt(x)}$ of the fiber is the intersection of $T_x\cE$ and $D^0(\bar{k}(x)[\varepsilon]/\varepsilon^2)$.
\end{enumerate}
\end{prop}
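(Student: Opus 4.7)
The plan is to use the universal pseudo-character $\cT$ and the universal Hecke character $\lambda$ on $\cE$ to construct a map $T_x\cE \to D(\bar{k}(x)[\varepsilon]/\varepsilon^2)$, then check it is (a) injective and (b) intersects $D^0$ exactly on the tangent space of the fiber.

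First I would interpret a tangent vector $v\in T_x\cE$ as a morphism $\tilde v\colon \Spec\bar{k}(x)[\varepsilon]/\varepsilon^2 \to \cE$ sending the closed point to $x$. Composing $\cT$ with $\tilde v$ gives a pseudo-character deformation of $\Tr\rho$; since $\rho$ is absolutely irreducible (as $f$ is classical cuspidal), this lifts uniquely up to strict equivalence to a Galois representation $\rho_v$ deforming $\rho$. I would then verify the three conditions defining $D$: Condition~(\ref{cond:min}) holds because the tame level $K_1(\frakN)$ is fixed along $\cE$, and local-global compatibility at primes $\frakl\nmid p$ propagates the inertial type of $\rho$ along the family. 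Condition~(\ref{cond:HT}) is essentially automatic: the weight map factors through $\cW=\Spf(\ZZ_p[\![(\cO_F\tensor\ZZ_p)^\times\times\ZZ_p^\times]\!])^{\rig}$, whose second factor $\ZZ_p^\times$ is a single variable recording the common summand $w-1$ of the two Hodge-Tate-Sen weights at every $\frakp$, so this sum is forced to be common to all $\frakp$. Condition~(\ref{cond:periodmap}) is the analytic continuation of the crystalline period: by Kisin, Kedlaya-Pottharst-Xiao, and Liu, the eigenvalue $\widetilde\lambda_\frakp\defeq\lambda(U_\frakp)|_v$ lifts $\lambda_\frakp$, and there is a nonzero vector in $D_\cris(\rho_v|_{\Gal_{F_\frakp}})^{\varphi=\widetilde\lambda_\frakp}$ lying in the $\Fil^{(\widetilde w-\widetilde k_{\tau_\frakp})/2}$-step corresponding to the deformed weight.

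For injectivity, locally on $\cE$ near $x$ the ring is $\cH(\cV)$, the image of $\cH^{\frakN p}\tensor_{\QQ_p}\cU_p$ in the endomorphism algebra of a finite-dimensional generalized $U_p$-eigenspace. Thus $v$ is determined by the values $\lambda|_v(T_\frakl),\lambda|_v(S_\frakl),\lambda|_v(U_\frakp)$. The first two can be recovered from $\Tr\rho_v(\Frob_\frakl^{-1})$ and $\det\rho_v(\Frob_\frakl^{-1})$, and the third is determined by the crystalline eigenvalue in~(\ref{cond:periodmap}): the hypothesis $\val_p(\lambda_\frakp)\neq\frac{w-1}{2}$ guarantees multiplicity one for the $\varphi$-eigenvalues on $D_\cris(\rho|_{\Gal_{F_\frakp}})$, so $\widetilde\lambda_\frakp$ is recovered unambiguously from $\rho_v$. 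This proves~(1). For~(2), a tangent vector $v$ lies in $T_x\cE_{\wt(x)}$ iff $\wt\circ\tilde v$ is the constant morphism with value $\wt(x)$. Under local class field theory, the $\cO_{F_\frakp}^\times$-component of the weight character recovers the pair of Hodge-Tate-Sen weights of $\rho_v|_{\Gal_{F_\frakp}}$, so constancy of $\wt\circ\tilde v$ is equivalent to constancy of the $\frakp$-Hodge-Tate-Sen weights for every $\frakp$, which is precisely~(\ref{cond:fiber}). Hence $T_x\cE_{\wt(x)}=T_x\cE\cap D^0(\bar{k}(x)[\varepsilon]/\varepsilon^2)$.

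The hard part is condition~(\ref{cond:periodmap}): one must propagate the crystalline period infinitesimally while tracking that the filtration index itself varies as the weight does. This is exactly what the analytic continuation theorems of \cite{Kis03,KPX14,Liu15} provide, and its careful invocation (including the use of the multiplicity-one hypothesis to pin down the $\varphi$-eigenvector up to scalar) is the arithmetic heart of the argument. A secondary subtlety is the local class field theory identification of $\cW$-coordinates with Hodge-Tate-Sen weights at each $\frakp$, which is what makes part~(2) come out cleanly.
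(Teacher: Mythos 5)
Your proposal takes essentially the same approach as the paper's proof. The paper constructs the surjection $R \twoheadrightarrow \cO_{\cE,x}$ by lifting the universal pseudo-character over the Henselian local ring $\cO_{\cE,x}$ (via Nyssen--Rouquier) to a genuine representation, checks it lands in the deformation functor $D$, and then observes that $\cO_{\cE,x}$ is generated by Hecke eigenvalues, all of which are traces of Frobenius or crystalline periods lying in the image of $R$; this surjection dualizes to the desired injection on tangent spaces, and part~(2) follows because, by Theorem~\ref{thm:Galrep}(2), the weight character and the Hodge--Tate--Sen weights determine each other. You perform the same construction pointwise on $\bar{k}(x)[\varepsilon]/\varepsilon^2$ and argue injectivity directly from recoverability of the Hecke eigenvalues (using the multiplicity-one hypothesis $\val_p(\lambda_\frakp)\neq\frac{w-1}{2}$ exactly as the paper does to pin down $\widetilde\lambda_\frakp$), which is the dual formulation of the same idea.
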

\begin{proof} \hfill
\begin{enumerate} 
 \item  
 By the assumption that $f$ is classical and cuspidal, $\rho \colon \Gal_F \rightarrow \GL(V) \isom \GL_2(\bar{k}(x))$ is absolutely irreducible.
Then since $\cO_{\cE,x}$ is Henselian, the theorem of Nyssen and Rouquier (\cite[Th\'eor\`eme 1]{Nys}\cite[Corollaire 5.2]{Rou}) implies that there exists a Galois representation
 \[
 \Gal_F \rightarrow \GL_2(\cO_{\cE,x})
 \]
 whose residual representation is $\rho$ and whose trace gives the pseudo-character $\cT$ composed with $\cO_\cE \rightarrow \cO_{\cE,x}$.
 Since the Galois representation satisfies the conditions in the deformation functor $D$, we have a morphism $R \rightarrow \cO_{\cE,x}$.
 Note that by construction of $\cE$, $\cO_{\cE,x}$ is generated by the prime-to-$\frakN p$ Hecke eigenvalues and the $U_\frakp$-eigenvalues.
 They are traces of Frobenius and the crystalline period $\widetilde{\lambda}_\frakp$ of the universal Galois representation, respectively, and hence they lie in the image of the universal deformation ring $R$.
We thus conclude that the morphism $R \rightarrow \cO_{\cE,x}$ is surjective.
 This induces an injection on the Zariski tangent spaces $T_x\cE \inj \Hom(R,\bar{k}(x)[\varepsilon]/\varepsilon^2) = D(\bar{k}(x)[\varepsilon]/\varepsilon^2)$.
 \item From Theorem~\ref{thm:Galrep}(2), we know that the Hodge--Tate--Sen weights and the weight are determined by each other.
\end{enumerate}
\end{proof}

We will also need some auxiliary deformation sub-functors of $D$.
Given $\frakp\in\Sigma_p$, let $D^\frakp\subset D$ be the sub-functor of deformations with a constant $\frakp$-Hodge--Tate--Sen weight $\frac{w-k_{\tau_\frakp}}2$.

\begin{lem} \label{lem:R^p}
 $D^\frakp$ is pro-representable by some complete local $\bar{k}(x)$-algebra $R^\frakp$ and $\dim R^\frakp \geq \dim R-1$.
\end{lem}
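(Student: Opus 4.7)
The plan is to realize $R^\frakp$ as the quotient of $R$ by a single element, then invoke Krull's principal ideal theorem. First, I would apply Sen's theorem (\cite[Theorem, p.659]{Sen88}) to the restriction of the universal deformation $V_R|_{\Gal_{F_\frakp}}$: since its Sen module has rank $2$, the associated Sen operator has characteristic polynomial $T^2 - s_\frakp T + p_\frakp$ with coefficients $s_\frakp, p_\frakp \in R$ (realized as analytic functions on $\Spf R$). Modulo the maximal ideal of $R$ this polynomial has the two $\frakp$-Hodge--Tate--Sen weights $\frac{w - k_{\tau_\frakp}}{2}$ and $\frac{w + k_{\tau_\frakp} - 2}{2}$ of $\rho$ as its roots, and these are distinct because $(\kunder,w)$ is cohomological, so $k_{\tau_\frakp} \geq 2$.

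Since $R$ is a complete local ring, Hensel's lemma factors this polynomial as $(T - a_\frakp)(T - b_\frakp)$ with $a_\frakp, b_\frakp \in R$ lifting the two residual Hodge--Tate--Sen weights; let $a_\frakp$ denote the lift of $\frac{w - k_{\tau_\frakp}}{2}$. I would then define
\[
 R^\frakp \defeq R / \bigl(a_\frakp - \tfrac{w - k_{\tau_\frakp}}{2}\bigr).
\]
By the uniqueness in the Hensel factorization and its compatibility with base change along a classifying map $R \to A$, the condition cut out on $D$ by the additional requirement "constant $\frakp$-Hodge--Tate--Sen weight $\frac{w - k_{\tau_\frakp}}{2}$" is precisely that the image of $a_\frakp$ in $A$ equals $\frac{w - k_{\tau_\frakp}}{2}$. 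Hence $R^\frakp$ pro-represents $D^\frakp$, and being a quotient of $R$ by a single element, Krull's principal ideal theorem yields $\dim R^\frakp \geq \dim R - 1$.

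The main point to verify is that the Hensel factor $a_\frakp$ really does specialize, under any classifying map $R \to A$ lying in $D$, to a well-defined ``smaller $\frakp$-Hodge--Tate--Sen weight'' of $V_A|_{\Gal_{F_\frakp}}$. This rests on the distinctness of the two residual weights (which separates the roots for Hensel) combined with the naturality of Sen's construction under specialization. Once this is established, the result is a lighter version of the construction of $R^0$ in Lemma~\ref{lem:rep}: instead of fixing all Hodge--Tate--Sen weights at every $\frakp$, we only impose a single equation, at a single prime, for the smaller weight.
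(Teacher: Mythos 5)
Your proof is correct and follows essentially the same route as the paper: realize $R^\frakp$ as the quotient of $R$ by a single element obtained from the Sen operator via \cite[Theorem, p.659]{Sen88}, and then apply Krull's principal ideal theorem. The only cosmetic difference is that the paper cuts by the symmetric polynomial $\Phi = \bigl(a - \frac{w-k_{\tau_\frakp}}{2}\bigr)\bigl(b - \frac{w-k_{\tau_\frakp}}{2}\bigr)$ in the two Sen eigenvalues $a,b$ (so that analyticity is automatic from the characteristic polynomial coefficients, and the factor $b - \frac{w-k_{\tau_\frakp}}{2}$ is silently a unit because the residual weights are distinct), whereas you make that unit argument explicit by first Hensel-factoring the characteristic polynomial and cutting by the linear factor $a_\frakp - \frac{w-k_{\tau_\frakp}}{2}$; these generate the same ideal of $R$.
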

\begin{proof}
 As in the proof of Lemma~\ref{lem:rep}, let $\varphi_{\frakp}\colon \Spf R \rightarrow \Spf S_{\frakp}$ be the map induced by restricting a Galois deformation to the decomposition group at $\frakp$.
Since we assumed the weight of $f$ is cohomological, the two $\frakp$-Hodge--Tate--Sen weights are distinct.
 Hence the condition that a deformation has a constant $\frakp$-Hodge--Tate--Sen weight $\frac{w-k_{\tau_0}}2$ can be described as the vanishing of a symmetric polynomial $\Phi$ in the two $\frakp$-Hodge--Tate--Sen weights.
 Again by \cite[Theorem, p.659]{Sen88}, this symmetric polynomial $\Phi$ is an analytic function on $\Spf S_\frakp$.
 Hence the universal deformation ring $R^\frakp$ can be consructed as the quotient $R/(\varphi_{\frakp}^\ast\Phi)$.
 The claim about the Krull dimension then follows.
\end{proof}

\subsection{Computing tangent spaces}
In this subsection, we will compute the codimension of $D^0(\bar{k}(x)[\varepsilon]/\varepsilon^2)$ in $D(\bar{k}(x)[\varepsilon]/\varepsilon^2)$ and deduce Theorem~\ref{thm:4to1}.

We begin to use the assumptions of $f$ in Theorem~\ref{thm:4to1} that there exists $\frakp\in\Sigma_p$ such that $\left.\rho\right|_{\Gal_{F_\frakp}}$ splits and $\val_p(\lambda_\frakp)=\frac{w+k_{\tau_\frakp}-2}2$.
Let $\Sigma\subset\Sigma_p$ be a subset such that $\left.\rho\right|_{\Gal_{F_\frakp}}$ splits and $\val_p(\lambda_\frakp)=\frac{w+k_{\tau_\frakp}-2}2$ for all $\frakp\in \Sigma$.
Write $\left.\rho\right|_{\Gal_{F_\frakp}} = \psi_{\frakp,1}\directsum\psi_{\frakp,2}$ for $\frakp\in \Sigma$.
Without loss of generality, we assume that for $\frakp\in\Sigma$, $\val_p(\lambda_\frakp)$ is the Hodge--Tate weight of $\psi_{\frakp,2}$.

It is known that first order deformations of a representation is equivalent to self-extensions of the representation.
Hence $D(\bar{k}(x)[\varepsilon]/\varepsilon^2)$ is a subspace of $\Ext^1_{\Gal_F}(V,V)$, which is further identified with $H^1(\Gal_F,V\tensor V^\ast)$.
For any $\frakp\in\Sigma_p$, we write $\loc_\frakp$ for the restriction map 
\[
 \loc_\frakp \colon H^1(\Gal_F,V\tensor V^\ast) \rightarrow H^1(\Gal_{F_\frakp},V\tensor V^\ast).
\]

Let $\widetilde{V}\in D(\bar{k}(x)[\varepsilon]/\varepsilon^2)$.
Then for all $\frakp\in\Sigma$, 
$\loc_\frakp(\widetilde{V})\in H^1(\Gal_{F_\frakp}, V\tensor V^\ast)=\Directsum_{i,j=1}^2 H^1(\Gal_{F_\frakp},\psi_{\frakp,i}\psi_{\frakp,j}^{-1})$.
Write $\loc_\frakp(\widetilde{V}) = (e_{\frakp,ij})$ according to this decomposition.

\begin{lem} \label{lem:22crys}
Let $\frakp\in\Sigma$, so that $e_{\frakp,22}$ makes sense.
Then $e_{\frakp,22}$ is a crystalline cohomology class, i.e. $e_{\frakp,22}$ lies in the kernel of 
\[
 H^1(\Gal_{F_\frakp}, \psi_{\frakp,2}\psi_{\frakp,2}^{-1}) \rightarrow H^1(\Gal_{F_\frakp}, \psi_{\frakp,2}\psi_{\frakp,2}^{-1}\tensor B_\cris).
\]
\end{lem}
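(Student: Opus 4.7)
The plan is to use condition~(3) of the deformation functor $D$ to produce a first-order deformation of the crystalline period, expand it in matrix coordinates coming from the splitting $V|_{\Gal_{F_\frakp}} = \psi_{\frakp,1} \oplus \psi_{\frakp,2}$, and read off an explicit coboundary for $c_{22}$ living in $B_\cris$.

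Concretely, I would first choose a basis $(v_1,v_2)$ of $V$ with $v_i$ spanning $\psi_{\frakp,i}$, so that a cocycle $c\colon \Gal_{F_\frakp}\to \End V$ representing the class of $\widetilde V|_{\Gal_{F_\frakp}}$ has entries $c_{ij}$ that are $1$-cocycles valued in $\psi_{\frakp,i}\psi_{\frakp,j}^{-1}$, with $c_{22}$ a representative of $e_{\frakp,22}$. By condition~(3) there exist $\widetilde\lambda_\frakp\in \bar{k}(x)[\varepsilon]/\varepsilon^2$ lifting $\lambda_\frakp$ and a nonzero $\widetilde v\in D_\cris(\widetilde V|_{\Gal_{F_\frakp}})^{\varphi = \widetilde\lambda_\frakp}$. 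The assumption $\val_p(\lambda_\frakp)\neq \frac{w-1}{2}$ forces $\lambda_\frakp$ to occur with multiplicity one as a $\varphi$-eigenvalue on $D_\cris(V|_{\Gal_{F_\frakp}})$ (the two valuations of the $\varphi$-eigenvalues of $\psi_{\frakp,1}\oplus\psi_{\frakp,2}$ differ), so the reduction of $\widetilde v$ lies in the line $D_\cris(\psi_{\frakp,2})^{\varphi=\lambda_\frakp}$; rescaling, write this reduction as $v = v_2\tensor t_2$, where $t_2\in B_\cris$ satisfies $g t_2 = \psi_{\frakp,2}(g)^{-1} t_2$.

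The key computation is then direct. Write $\widetilde v = v + \varepsilon v'$ with $v' = v_1\tensor u_1 + v_2\tensor u_2\in V\tensor B_\cris$. Expanding $\widetilde\rho(g)\widetilde v = \widetilde v$ and collecting the coefficient of $\varepsilon\cdot v_2$ gives the identity
\[
 c_{22}(g)\,t_2 \;=\; u_2 - \psi_{\frakp,2}(g)\,(g u_2) \qquad\text{in } B_\cris.
\]
Since $\psi_{\frakp,2}$ is a crystalline character of $\Gal_{\QQ_p}$ (its $\varphi$-eigenvalue $\lambda_\frakp$ has valuation equal to its Hodge--Tate weight, and by hypothesis it is a direct summand of the crystalline representation $V|_{\Gal_{F_\frakp}}$), the element $t_2$ is a unit times a power of Fontaine's $t$, hence invertible in $B_\cris = B_\cris^+[1/t]$. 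Setting $b\defeq u_2/t_2 \in B_\cris$, a one-line check shows $b - gb = c_{22}(g)$, so $c_{22}$ becomes a coboundary after base change to $B_\cris$.

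The main point requiring care is the invertibility of $t_2$ in $B_\cris$ (rather than only in $B_\dR$), which is what distinguishes the crystalline from the de Rham statement; this is exactly where the critical-slope hypothesis $\val_p(\lambda_\frakp) = \frac{w + k_{\tau_\frakp}-2}{2}$ and the splitting of $\left.\rho\right|_{\Gal_{F_\frakp}}$ combine to guarantee that the summand $\psi_{\frakp,2}$ from which $v$ comes is genuinely crystalline. A secondary point is confirming that the mod-$\varepsilon$ reduction of $\widetilde v$ picks out $\psi_{\frakp,2}$ rather than mixing in a $\psi_{\frakp,1}$-component, which I would handle using the multiplicity-one hypothesis on $\lambda_\frakp$ as explained above.
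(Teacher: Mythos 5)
Your proof is correct and follows essentially the same route as the paper: both use condition~(3) of $D$ together with the multiplicity-one hypothesis on $\lambda_\frakp$ to produce a $\varphi$-eigenvector in $D_\cris(\widetilde V|_{\Gal_{F_\frakp}})$ lifting a generator of $D_\cris(\psi_{\frakp,2})$, and conclude that $e_{\frakp,22}$ dies after tensoring with $B_\cris$. The paper runs the long exact sequence in Galois cohomology for $(\cdot\otimes B_\cris)^{\Gal_{F_\frakp}}$ and reads off surjectivity, while your explicit cocycle computation unwinds the connecting homomorphism directly; your observation that $t_2$ must be invertible in $B_\cris$ (equivalently, that $\psi_{\frakp,2}$ is crystalline, which follows because $D_\cris(\psi_{\frakp,2})\supseteq D_\cris(V|_{\Gal_{F_\frakp}})^{\varphi=\lambda_\frakp}\neq 0$) is the right caveat and is implicit in the paper's argument.
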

\begin{proof}
From the short exact sequence
\[
 0 \rightarrow V \rightarrow \widetilde{V} \rightarrow V \rightarrow 0,
\]
by taking $D_\cris(\cdot) = (\cdot\tensor B_\cris)^{\Gal_{F_{\frakp}}}$ we get a long exact sequence 
\[
 0 \rightarrow D_\cris(\left.V\right|_{\Gal_{F_\frakp}}) \rightarrow D_\cris(\left.\widetilde{V}\right|_{\Gal_{F_\frakp}}) \rightarrow D_\cris(\left.V\right|_{\Gal_{F_\frakp}}) \rightarrow H^1(\Gal_{F_\frakp},V\tensor B_\cris).
\]
 Condition~(\ref{cond:periodmap}) in the deformation functor $D$ says that there exists a lift $\widetilde{\lambda}_\frakp\in \bar{k}(x)[\varepsilon]/\varepsilon^2$ of $\lambda_\frakp$ such that
\[
 D_\cris(\left.\widetilde{V}\right|_{\Gal_{F_\frakp}})^{\varphi=\widetilde{\lambda}_\frakp}\neq 0.
\]
Since $f$ has cohomological weight, $D_\cris(V)^{\varphi=\lambda_\frakp}$ is $1$-dimensional (but not larger) over $\bar{k}(x)$.
Hence we have the short exact sequence
\[
  0 \rightarrow D_\cris(\left.V\right|_{\Gal_{F_\frakp}})^{\varphi=\lambda_\frakp} \rightarrow D_\cris(\left.\widetilde{V}\right|_{\Gal_{F_\frakp}})^{\varphi=\widetilde\lambda_\frakp} \rightarrow D_\cris(\left.V\right|_{\Gal_{F_\frakp}})^{\varphi=\lambda_\frakp} \rightarrow 0.
\]
Since $\val_p(\lambda_\frakp) = \frac{w+k_{\tau_\frakp}-2}2$, $D_\cris(\left.V\right|_{\Gal_{F_\frakp}})^{\varphi=\lambda_\frakp} = D_\cris(\psi_{\frakp,2})$.
Hence the above short exact sequence means
\[
 0\rightarrow D_\cris(\psi_{\frakp,2}) \rightarrow D_\cris(e_{\frakp,22}\psi_{\frakp,2}) \rightarrow D_\cris(\psi_{\frakp,2}) \rightarrow 0.
\]
Here $e_{\frakp,ij}\psi_{\frakp,j}$ stands for the extension of $\psi_{\frakp,j}$ by $\psi_{\frakp,i}$ corresponding to the cohomology class $e_{\frakp,ij}$.
The surjectivity means that the cohomology class $e_{\frakp,22}\in H^1(\Gal_{F_\frakp},\psi_{\frakp,2}\psi_{\frakp,2}^{-1})$ becomes zero in $H^1(\Gal_{F_\frakp}, \psi_{\frakp,2}\psi_{\frakp,2}^{-1}\tensor B_\cris)$, which is exactly the definition of $e_{\frakp,22}$ being crystalline.
\end{proof}

In the next lemma, we show that for any $\frakp$ in $\Sigma\subset\Sigma_p$, $D^\frakp$ are all the same, and they parametrizes deformations in $D$ with constant $\frakp'$-Hodge--Tate--Sen weights for all $\frakp'\in\Sigma$.
For this, we briefly recall the definition of Hodge--Tate--Sen weights.
Let $\QQ_p^\cyc$ be the $p$-adic completion of $\QQ_p(\mu_{p^\infty})$, and $\Gamma$ be the Galois group $\Gal(\QQ_p^\cyc/\QQ_p)\isom \ZZ_p^\times$.
Sen's theory says that there is an equivalence of categories between the category of semi-linear $\CC_p$-representation of $\Gal_{\QQ_p}$ and the category of semi-linear $\QQ_p^\cyc$-representation of $\Gamma$.
 Let $D_\Sen$ denote Sen's functor, from the category of finite dimensional continuous $\QQ_p$-representations of $\Gal_{\QQ_p}$ to the category of semi-linear $\QQ_p^\cyc$-representations of $\Gamma$. 
Recall that a semi-linear $\QQ_p^\cyc$-representation comes equipped with a $\QQ_p^\cyc$-linear endomorphism $\phi$, called \emph{Sen endomorphism} (\cite[Theorem 4]{Sen81}).
Then the Hodge--Tate--Sen weights of a $\QQ_p$-representation $W$ of $\Gal_{\QQ_p}$ are by definition the eigenvalues of the Sen endomorphism on $D_{\Sen}(W)$.
Since $D_{\Sen}$ is an exact functor, we have
\[
 D_\Sen \colon \Ext^1_{\Gal_{\QQ_p}}(W,W) \rightarrow \Ext^1_\Gamma(D_\Sen(W), D_\Sen(W)).
\]

\begin{lem} \label{lem:constHT}
 Let $\widetilde{V} \in D(\bar{k}(x)[\varepsilon]/\varepsilon^2)$.
 If $\widetilde V$ lies in the subspace $D^\frakp(\bar{k}(x)[\varepsilon]/\varepsilon^2)\subset D(\bar{k}(x)[\varepsilon]/\varepsilon^2)$ for some $\frakp\in\Sigma$, then $\widetilde V$ has constant $\frakp'$-Hodge--Tate--Sen weights for all $\frakp'\in\Sigma$.
\end{lem}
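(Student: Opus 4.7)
The plan is to compute the perturbation of the two $\frakp'$-Hodge--Tate--Sen weights of $\widetilde V$ for each $\frakp'\in\Sigma$ and to use Lemma~\ref{lem:22crys} to force the perturbation attached to the critical-slope weight to vanish.

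Fix $\frakp'\in\Sigma$. I would choose an $A$-basis $(\widetilde f_1,\widetilde f_2)$ of $\widetilde V|_{\Gal_{F_{\frakp'}}}$, where $A=\bar{k}(x)[\varepsilon]/\varepsilon^2$, lifting a residual basis compatible with the splitting $V|_{\Gal_{F_{\frakp'}}}=\psi_{\frakp',1}\oplus\psi_{\frakp',2}$. The Sen operator $\phi$ on $D_{\Sen}(\widetilde V|_{\Gal_{F_{\frakp'}}})$ then has matrix
\[
\phi=\begin{pmatrix}\frac{w-k_{\tau_{\frakp'}}}{2}+a_{\frakp'}\varepsilon & \ast\,\varepsilon \\ \ast\,\varepsilon & \frac{w+k_{\tau_{\frakp'}}-2}{2}+d_{\frakp'}\varepsilon\end{pmatrix},
\]
with $a_{\frakp'},d_{\frakp'}\in\bar{k}(x)$: the trace and determinant descend to $A$, and the residual eigenvalues are distinct since the weight is cohomological. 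Because $\varepsilon^2=0$, the off-diagonal entries do not affect the characteristic polynomial, so the two $\frakp'$-Hodge--Tate--Sen weights of $\widetilde V$ are exactly $\frac{w-k_{\tau_{\frakp'}}}{2}+a_{\frakp'}\varepsilon$ and $\frac{w+k_{\tau_{\frakp'}}-2}{2}+d_{\frakp'}\varepsilon$. Condition~(\ref{cond:HT}) in the definition of $D$ then implies that $s\defeq a_{\frakp'}+d_{\frakp'}\in\bar{k}(x)$ is independent of $\frakp'\in\Sigma_p$.

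Next I would identify $d_{\frakp'}$ with the Sen obstruction of the self-extension $e_{\frakp',22}\psi_{\frakp',2}$ by a subquotient argument. Consider the $\Gal_{F_{\frakp'}}$-stable subspace $W=\{v\in\widetilde V|_{\Gal_{F_{\frakp'}}}\mid v\bmod\varepsilon\in V_2\}$ together with its $\Gal_{F_{\frakp'}}$-stable line $\varepsilon V_1$ (the image of $V_1$ under multiplication by $\varepsilon$). A direct cocycle computation in the basis $(\widetilde f_2,\varepsilon\widetilde f_2)$ of $W/\varepsilon V_1$ shows that $W/\varepsilon V_1$ is precisely the self-extension of $\psi_{\frakp',2}$ classified by $e_{\frakp',22}$. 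Restricting the Sen operator on $D_{\Sen}(\widetilde V|_{\Gal_{F_{\frakp'}}})$ to $D_{\Sen}(W)$ and descending modulo $D_{\Sen}(\varepsilon V_1)$, the Sen operator on $D_{\Sen}(W/\varepsilon V_1)$ becomes
\[
\begin{pmatrix}\frac{w+k_{\tau_{\frakp'}}-2}{2} & 0 \\ d_{\frakp'} & \frac{w+k_{\tau_{\frakp'}}-2}{2}\end{pmatrix}
\]
in the basis above. A self-extension of a character is Hodge--Tate if and only if its Sen operator is scalar, so $d_{\frakp'}$ is exactly the Sen obstruction of $e_{\frakp',22}\psi_{\frakp',2}$.

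By Lemma~\ref{lem:22crys}, $e_{\frakp',22}$ is crystalline for every $\frakp'\in\Sigma$, hence in particular Hodge--Tate, so $d_{\frakp'}=0$ for every $\frakp'\in\Sigma$. Applying this at the distinguished $\frakp$, where $\widetilde V\in D^\frakp$ already forces $a_\frakp=0$, we obtain $s=a_\frakp+d_\frakp=0$; then for any $\frakp'\in\Sigma$ we get $a_{\frakp'}=s-d_{\frakp'}=0$, so both $\frakp'$-Hodge--Tate--Sen weights of $\widetilde V$ are constant, as required. The principal technical hurdle is the subquotient identification in the third paragraph---making precise that the $(2,2)$ perturbation $d_{\frakp'}$ extracted from the Sen operator on the full deformation coincides with the Sen obstruction of the extension classified by $e_{\frakp',22}$---after which the argument collapses to condition~(\ref{cond:HT}) together with the crystallinity provided by Lemma~\ref{lem:22crys}.
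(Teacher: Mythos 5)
Your proof is correct and follows essentially the same route as the paper's: both isolate the $(2,2)$-component $e_{\frakp',22}$ of the restricted extension class, invoke Lemma~\ref{lem:22crys} to conclude the $\psi_{\frakp',2}$-Hodge--Tate--Sen weight is unperturbed, then combine the $D^\frakp$ hypothesis with condition~(\ref{cond:HT}) to kill the remaining perturbation. The only difference is presentational: you track the perturbations $a_{\frakp'},d_{\frakp'}$ via an explicit matrix for the Sen operator and a concrete subquotient identification, whereas the paper phrases the same facts through the decomposition of $\Ext^1$ and the vanishing of the cross terms $D_\Sen(e_{\frakp',ij})$ for $i\neq j$.
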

\begin{proof}
Let $\frakp'\in\Sigma$.
 The self-extension space $\Ext^1_{\Gal_{F_{\frakp'}}}(V,V)$ decomposes into four terms and 
\[
 D_\Sen \colon 
\Directsum_{i,j=1}^2\Ext^1_{\Gal_{F_{\frakp'}}}(\psi_{\frakp',j},\psi_{\frakp',i}) 
\rightarrow  
\Directsum_{i,j=1}^2\Ext^1_\Gamma(D_\Sen(\psi_{\frakp',j}),D_\Sen(\psi_{\frakp',i}))
\]
 preserves the corresponding direct summands.
 Since $\psi_{\frakp',1}$ and $\psi_{\frakp',2}$ have distinct Hodge--Tate weights, $D_\Sen(\psi_{\frakp',i}e_{\frakp',ij})=0$ for $i\neq j$.

 By Lemma~\ref{lem:22crys}, $e_{\frakp',22}\psi_{\frakp',2}$ is a crystalline extension; in particular it is Hodge--Tate.
 Hence $D_\Sen(e_{\frakp',22}\psi_{\frakp',2})=0$.
 Namely, $\widetilde V$ has a constant $\frakp'$-Hodge--Tate weight $\frac{w+k_{\tau_{\frakp'}}-2}2$.
 
 On the other hand, by the definition of $D^\frakp$, $\widetilde V$ has a constant $\frakp$-Hodge--Tate weight $\frac{w-k_{\tau_{\frakp}}}2$, which is not equal to $\frac{w+k_{\tau_{\frakp}}-2}2$ by the assumption that $f$ has cohomological weight.
 Hence by condition~(\ref{cond:HT}) in the deformation functor $D$, which says that the sum of the two $\frakp'$-Hodge--Tate--Sen weights is independent of $\frakp'\in\Sigma_p$, we conclude that $\widetilde V$ has constant $\frakp'$-Hodge--Tate weight $\frac{w-k_{\tau_{\frakp'}}}2$ and $\frac{w+k_{\tau_{\frakp'}}-2}2$ for all $\frakp'\in\Sigma$.
\end{proof}

Because of the above Lemma, we write $D^\Sigma$ for the functor $D^\frakp$ for any $\frakp\in \Sigma$.

In the following, we would like to compare the dimension of $D^0(\bar{k}(x)[\varepsilon]/\varepsilon^2)$ and $D^\Sigma(\bar{k}(x)[\varepsilon]/\varepsilon^2)$.
Note that we have maps
\[
 \Ext^1_{\Gal_F}(V,V)
\xrightarrow{\Directsum\loc_\frakp}
\Directsum_{\frakp\in\Sigma_p}\Ext^1_{\Gal_{F_\frakp}}(V,V) 
\xrightarrow{\Directsum D_{\Sen,\frakp}}
\Directsum_{\frakp\in\Sigma_p}\Ext^1_{\Gamma}(D_\Sen(\left.V\right|_{\Gal_{F_\frakp}}),D_\Sen(\left.V\right|_{\Gal_{F_\frakp}}) ).
\]
Since we assumed $f$ has cohomological weight, for any $\frakp\in\Sigma_p$, $D_\Sen(\left.V\right|_{\Gal_{F_\frakp}})$ is the direct sum of its $1$-dimensional $\left(\phi=\frac{w-k_{\tau_\frakp}}2\right)$-eigenspace and $\left(\phi=\frac{w+k_{\tau_\frakp}-2}2\right)$-eigenspace.
 
\begin{prop} \label{prop:D0=ker}
 $D^0(\bar{k}(x)[\varepsilon]/\varepsilon^2)$ is the kernel of 
\[
 D^\Sigma(\bar{k}(x)[\varepsilon]/\varepsilon^2)
 \rightarrow 
\Directsum_{\frakp\notin\Sigma} \Ext^1_\Gamma(D_\Sen(\left.V\right|_{\Gal_{F_\frakp}})^{\phi=\frac{w-k_{\tau_\frakp}}2}, D_\Sen(\left.V\right|_{\Gal_{F_\frakp}})^{\phi=\frac{w-k_{\tau_\frakp}}2}).
\]
\end{prop}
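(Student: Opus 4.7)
The plan is to apply Sen's decompletion functor one prime at a time and extract the individual Hodge--Tate--Sen weights of a deformation. By Lemma~\ref{lem:constHT}, every $\widetilde V \in D^\Sigma(\bar k(x)[\varepsilon]/\varepsilon^2)$ already has constant $\frakp'$-Hodge--Tate--Sen weights for each $\frakp' \in \Sigma$, so $\widetilde V$ lies in $D^0(\bar k(x)[\varepsilon]/\varepsilon^2)$ if and only if its $\frakp$-Hodge--Tate--Sen weights are constant for every $\frakp \notin \Sigma$. Fixing such $\frakp$, write $\mu_{1,\frakp} = \tfrac{w-k_{\tau_\frakp}}{2}$ and $\mu_{2,\frakp} = \tfrac{w+k_{\tau_\frakp}-2}{2}$ for the Hodge--Tate--Sen weights of $V|_{\Gal_{F_\frakp}}$, and $M_{i,\frakp} = D_\Sen(V|_{\Gal_{F_\frakp}})^{\phi = \mu_{i,\frakp}}$.

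Since $(\kunder,w)$ is cohomological, $\mu_{1,\frakp} \neq \mu_{2,\frakp}$, so $D_\Sen(V|_{\Gal_{F_\frakp}}) = M_{1,\frakp} \oplus M_{2,\frakp}$. Applying $D_\Sen$ to the short exact sequence $0 \to V|_{\Gal_{F_\frakp}} \to \widetilde V|_{\Gal_{F_\frakp}} \to V|_{\Gal_{F_\frakp}} \to 0$ and using Hensel's lemma to factor the characteristic polynomial of the Sen operator over $A = \bar k(x)[\varepsilon]/\varepsilon^2$, I would obtain a canonical decomposition
\[
D_\Sen(\widetilde V|_{\Gal_{F_\frakp}}) = \widetilde M_{1,\frakp} \oplus \widetilde M_{2,\frakp},
\]
where each $\widetilde M_{i,\frakp}$ deforms $M_{i,\frakp}$ and the Sen operator acts on $\widetilde M_{i,\frakp}$ as the scalar $\tilde\mu_{i,\frakp} = \mu_{i,\frakp} + a_i \varepsilon$ with $a_i \in \bar k(x)$. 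Condition~(\ref{cond:HT}) in the definition of $D$ forces $a_1 + a_2 = 0$, so the $\frakp$-Hodge--Tate--Sen weights are constant if and only if $a_1 = 0$.

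The key identification is that $\widetilde M_{1,\frakp}$, viewed as a Sen self-extension of $M_{1,\frakp}$, coincides with the $(1,1)$-component of $D_\Sen(\loc_\frakp(\widetilde V))$ under the decomposition $\Ext^1_\Gamma(M_{1,\frakp}\oplus M_{2,\frakp}, M_{1,\frakp}\oplus M_{2,\frakp}) = \bigoplus_{i,j=1}^2 \Ext^1_\Gamma(M_{j,\frakp}, M_{i,\frakp})$. Picking an $A$-module trivialization of $\widetilde M_{1,\frakp}$, the Sen operator on the underlying $\bar k(x)$-extension has matrix $\left(\begin{smallmatrix}\mu_{1,\frakp} & 0 \\ a_1 & \mu_{1,\frakp}\end{smallmatrix}\right)$, so $a_1 = 0$ exactly when this extension is split in the Sen category. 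Combining these equivalences across all $\frakp \notin \Sigma$ then gives the proposition.

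The main obstacle I anticipate is the compatibility asserted in the previous paragraph: that the $A$-linear generalized eigenspace decomposition of the Sen operator on $D_\Sen(\widetilde V|_{\Gal_{F_\frakp}})$ matches, under Sen's equivalence, the $\Ext^1_\Gamma$-decomposition into $\phi$-eigenspaces. This is formal once one tracks $D_\Sen$ across Artinian coefficients, but writing it down cleanly requires some care.
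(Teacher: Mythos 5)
Your proof is correct and follows essentially the same route as the paper's: use Lemma~\ref{lem:constHT} for $\frakp\in\Sigma$, then observe that vanishing of the $(1,1)$-component of the Sen self-extension at $\frakp\notin\Sigma$ is equivalent to the $\frac{w-k_{\tau_\frakp}}2$-weight being constant, and invoke condition~(\ref{cond:HT}) to conclude the other weight is constant too. The paper leaves the middle step implicit; you make it explicit by tracking the generalized $\mu_{1,\frakp}$-eigenspace of the Sen operator over $\bar k(x)[\varepsilon]/\varepsilon^2$, which is exactly the right computation.
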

\begin{proof}
 By Lemma~\ref{lem:constHT}, $\widetilde{V}$ already has constant $\frakp$-Hodge--Tate--Sen weights for all $\frakp\in\Sigma$. 
 The kernel of this map consists of $\widetilde{V}\in D^\Sigma(\bar{k}(x)[\varepsilon]/\varepsilon^2)$ which has a constant $\frakp$-Hodge--Tate--Sen weight $\frac{w-k_{\tau_\frakp}}2$ for $\frakp\notin\Sigma$.
 Then by condition~(\ref{cond:HT}) in the deformation functor $D$, $\widetilde V$ also has a constant $\frakp$-Hodge--Tate weight $\frac{w+k_{\tau_{\frakp}}-2}2$ for $\frakp\notin \Sigma$.
Namely $\widetilde V$ has constant $\frakp$-Hodge--Tate--Sen weights for all $\frakp\in\Sigma_p$.
\end{proof}

\begin{cor} \label{cor:codim}
 $\dim_{\bar{k}(x)} D^\Sigma(\bar{k}(x)[\varepsilon]/\varepsilon^2) - \dim_{\bar{k}(x)} D^0(\bar{k}(x)[\varepsilon]/\varepsilon^2) \leq d-\#\Sigma$.
\end{cor}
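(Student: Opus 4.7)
The corollary is essentially immediate from Proposition~\ref{prop:D0=ker} once one bounds the dimension of the target of the map. The plan is to apply the rank-nullity theorem:
\[
 \dim_{\bar{k}(x)} D^\Sigma(\bar{k}(x)[\varepsilon]/\varepsilon^2) - \dim_{\bar{k}(x)} D^0(\bar{k}(x)[\varepsilon]/\varepsilon^2)
\]
equals the dimension of the image of the map in Proposition~\ref{prop:D0=ker}, which is at most the dimension of its target. So the task reduces to showing
\[
 \dim_{\bar{k}(x)} \bigoplus_{\frakp \notin \Sigma} \Ext^1_\Gamma\bigl(D_\Sen(\left.V\right|_{\Gal_{F_\frakp}})^{\phi=\frac{w-k_{\tau_\frakp}}{2}}, D_\Sen(\left.V\right|_{\Gal_{F_\frakp}})^{\phi=\frac{w-k_{\tau_\frakp}}{2}}\bigr)
 \leq d - \#\Sigma.
\]

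Since there are exactly $d - \#\Sigma$ summands, it suffices to argue that each summand is at most one-dimensional over $\bar{k}(x)$. For each $\frakp \notin \Sigma$, the space $L_\frakp \defeq D_\Sen(\left.V\right|_{\Gal_{F_\frakp}})^{\phi=\frac{w-k_{\tau_\frakp}}{2}}$ is a one-dimensional $\QQ_p^\cyc \tensor_{\QQ_p} \bar{k}(x)$-module (it is the $\phi$-eigenspace associated to a distinct eigenvalue, since the weight is cohomological). A first-order semilinear self-extension of $L_\frakp$ over $\bar{k}(x)[\varepsilon]/\varepsilon^2$ compatible with the Sen operator data is classified up to isomorphism by the perturbation of the single Sen eigenvalue acting on it, which is an element of $\bar{k}(x)$. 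This identifies each summand with (a subspace of) $\bar{k}(x)$, giving the needed bound.

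The main conceptual check is the last assertion—that only the scalar perturbation of the single eigenvalue survives. One way to see this is to observe that for a class $\widetilde{V} \in D^\Sigma(\bar{k}(x)[\varepsilon]/\varepsilon^2)$, the target component at $\frakp \notin \Sigma$ records precisely the first-order deformation of the eigenvalue $\frac{w-k_{\tau_\frakp}}{2}$ of the Sen operator on $D_\Sen(\left.\widetilde{V}\right|_{\Gal_{F_\frakp}})$, which takes values in $\bar{k}(x) \cdot \varepsilon$; condition~(\ref{cond:HT}) in the definition of $D$ then pins down the other eigenvalue from the first one, so no further freedom is introduced. I expect this eigenvalue-perturbation identification to be the main point to get right, since it involves unwinding carefully what $\Ext^1_\Gamma$ in the category of semi-linear $\QQ_p^\cyc$-representations of $\Gamma$ computes; the rest is bookkeeping.
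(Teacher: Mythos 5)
Your overall structure matches the paper's: apply rank--nullity to the map in Proposition~\ref{prop:D0=ker} and bound the dimension of the target by showing each of the $d-\#\Sigma$ summands is at most $1$-dimensional over $\bar{k}(x)$. The gap is in the justification of that $1$-dimensionality. You assert that a semilinear self-extension of $L_\frakp$ ``is classified up to isomorphism by the perturbation of the single Sen eigenvalue acting on it,'' but this is exactly the nontrivial content of the bound, not an observation one can read off. A priori, two distinct classes in $\Ext^1_\Gamma(L_\frakp,L_\frakp)$ could induce the \emph{same} infinitesimal perturbation of the Sen eigenvalue: a nontrivial extension of $\QQ_p^\cyc$ by $\QQ_p^\cyc$ in the category of semilinear $\Gamma$-modules need not be detectable by its Sen operator alone. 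The injectivity of the ``Sen-eigenvalue-perturbation'' map from $\Ext^1_\Gamma(L_\frakp,L_\frakp)$ to $\bar{k}(x)$ is equivalent to the $1$-dimensionality you are trying to prove, so your argument as stated is circular.

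The paper closes this gap by invoking Sen's equivalence of categories to identify $\Ext^1_\Gamma\bigl(D_\Sen(W),D_\Sen(W)\bigr)$ with $\Ext^1_{\Gal_{\QQ_p}}(W,W)$ for the relevant $1$-dimensional $\CC_p$-semilinear representation $W$, and then citing Tate's theorem that this equals $H^1(\Gal_{\QQ_p},\CC_p)$, which is $1$-dimensional. Equivalently, by Tate--Sen, $H^1(\Gamma,\QQ_p^\cyc)$ is $1$-dimensional and spanned by $\log\chi_\cyc$, from which one can in fact deduce that the derivative-at-the-identity map (your ``Sen-eigenvalue-perturbation'' map) is an isomorphism; but either way, the Tate(--Sen) vanishing input is the crux, and you need to cite it explicitly rather than take the classification for granted. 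Once that is supplied, your rank--nullity bookkeeping is correct and essentially identical to the paper's.
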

\begin{proof}
 Sen's theory says that $\Ext^1_\Gamma(D_\Sen(\left.V\right|_{\Gal_{F_\frakp}})^{\phi=\frac{w-k_{\tau_\frakp}}2}, D_\Sen(\left.V\right|_{\Gal_{F_\frakp}})^{\phi=\frac{w-k_{\tau_\frakp}}2})$ is isomorphic to 
\[
 \Ext^1_{\Gal_{\QQ_p}}(\CC_p, \CC_p)\isom H^1(\Gal_{\QQ_p},\CC_p), 
\]
which has dimension $1$.
Then the upper bound on the codimension of $D^0(\bar{k}(x)[\varepsilon]/\varepsilon^2)$ in $D^\Sigma(\bar{k}(x)[\varepsilon]/\varepsilon^2)$ follows from Proposition~\ref{prop:D0=ker}.
\end{proof}

Now we can deduce Theorem~\ref{thm:4to1}.
\begin{proof}[Proof of Theorem~\ref{thm:4to1}]
Corollary~\ref{cor:codim} and Lemma~\ref{lem:R^p} together says that\[								
\dim D(\bar{k}(x)[\varepsilon]/\varepsilon^2) - \dim D^0(\bar{k}(x)[\varepsilon]/\varepsilon^2) 
\leq d+1-\#\Sigma.			       
\] 
By Proposition~\ref{prop:R=T} (2), this implies that 
\[
 \dim T_x\cE - \dim T_x\cE_{\wt(x)} \leq d+1-\#\Sigma.
\] 
Since $\cE$ is of dimension $d+1$, $T_x\cE$ has dimension at least $d+1$, we conclude that $\dim T_x\cE_{\wt(x)}\geq\#\Sigma$.
\end{proof}


\bibliographystyle{amsalpha}
\bibliography{ram_of_Hilbert_eigenvariety}

\end{document}